\title{Pseudorandomness of the Sticky Random Walk}
\author{ \href{https://orcid.org/0000-0003-2893-9469}{\includegraphics[scale=0.06]{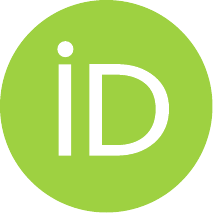}\hspace{1mm}Emile Anand} \\
	Computing and Mathematical Sciences\\
	California Institute of Technology\\
	Pasadena, CA, 91125 \\
	\texttt{eanand@caltech.edu} \\
	\And
	\href{https://orcid.org/0000-0002-6390-9401}{\includegraphics[scale=0.06]{orcid.pdf}\hspace{1mm}Chris Umans} \\
	Computing and Mathematical Sciences\\
	California Institute of Technology\\
	Pasadena, CA, 91125 \\
	\texttt{umans@cms.caltech.edu} \\
}
\date{}
\begin{document}


\maketitle

\begin{abstract}
We extend the pseudorandomness of random walks on expander graphs using the sticky random walk. Building on the works of  \citep{cohen_et_al:LIPIcs.ICALP.2022.43} \citep{guruswami_et_al:LIPIcs.ITCS.2021.48}, \citep{golowich_et_al:LIPIcs.CCC.2022.27} recently showed that expander random walks can fool all symmetric functions in total variation distance (TVD) upto an $O(\lambda (\frac{p}{\min f})^{O(p)})$ error, where $\lambda$ is the second largest eigenvalue of the expander, $p$ is the size of the arbitrary alphabet used to label the vertices, and $\min f = \min_{b\in[p]} f_b$, where $f_b$ is the fraction of vertices labeled $b$ in the graph. \citep{golowich_et_al:LIPIcs.CCC.2022.27} conjectures that the dependency on the $(\frac{p}{\min f})^{O(p)}$ term is not tight. In this paper, we resolve the conjecture in the affirmative for a family of expanders. We present a generalization of \citep{guruswami_et_al:LIPIcs.ITCS.2021.48}'s sticky random walk for which \citep{golowich_et_al:LIPIcs.CCC.2022.27} predicts a TVD upper bound of $O(\lambda p^{O(p)})$ using a Fourier-analytic approach. For this family of graphs, we use a combinatorial approach involving the Krawtchouk functions used in \citep{guruswami_et_al:LIPIcs.ITCS.2021.48} to derive a strengthened TVD of $O(\lambda)$. Furthermore, we present equivalencies between the generalized sticky random walk, and, using linear-algebraic techniques, show that the generalized sticky random walk is an infinite family of expander graphs.

\end{abstract}

\keywords{Sticky Random Walk \and Expander Graphs\and Pseudorandomness \and Derandomization}

\pagebreak

\section{Introduction}\label{sec:Intro}

Expander graphs are undirected spectral sparsifiers of the clique with high expansion properties, and they are among the most useful combinatorial objects in theoretical computer science due to their applications in pseudorandomness and in error-correcting codes (see \cite{kowalski2013expander}\cite{expsurvey}\cite{vadhan_2013}\cite{sipser-spielman}\cite{reinfold-st-connectivity}). A $d$-regular graph $G\!=\!(V\!,\!E)$ where $|V|\!=\!n$ is said to be an $(n, d, \lambda)$-expander if (for eigenvalues $\lambda_1 \geq \lambda_2 \geq \cdots \lambda_n=-1$ of its normalized adjacency matrix $A$) the second largest magnitude of the eigenvalue of its adjacency matrix $A$ is atmost $\lambda$. In other words, $\max_{i\neq 1}|\lambda_i| \leq \lambda$. It is a well-known fact that $\lambda\leq 1$ implies $G$ is connected; so, smaller values of $\lambda$ pertain to stronger combinatorial connectivity properties (see \cite{trevisanlecturenotes}). There are several known constructions \cite{https://doi.org/10.48550/arxiv.1711.06558}\cite{ben-aroya-avraham-tashma}\cite{sipser-spielman}\cite{reingold_vadhan_wigderson_2004} of optimal (Ramanujan) expander graphs that saturate the Alon-Boppana bound \cite{10.1215/S0012-7094-03-11812-8} which characterizes ``good'' expanders. The most useful fact about expander graphs in pseudorandomness arises from the fact that random walks on them mix fast. Let $v_0, ..., v_{t-1}$ be a sequence of vertices obtained by a $t$-step walk on an expander graph $G$ with second largest eigenvalue atmost $\lambda$. Gilman's \cite{gillman} results uses the Chernoff bound to characterize the rate of mixing (this has since been improved in \cite{rao-shravas-regv} and \cite{golowich_et_al:LIPIcs.CCC.2022.27}).

\emph{Expander-Walk Chernoff Bound (Gilman)}. For graph $G = (V, E)$, let $v_0,\dots, v_{t-1}$ denote a sequence of vertices obtained from a $t$-step -walk on an expander graph $G$. For any function $f: [n]\to \{0,1\}$, let the stationary distribution of $G$ be $\pi(f) := \lim_{t\to\infty}\frac{1}{t}\sum_{i=0}^{t-1} f(v_i)$. The expander-walk Chernoff bound \cite{gillman} states that $\forall\varepsilon > 0$,
\[\Pr\left[\left|\frac{1}{t}\sum_{i=0}^{t-1}f(X_i) - \pi(f)\right| \geq \varepsilon \right] \leq 2e^{-\Omega((\lambda-\varepsilon)^2 t)}\]
For a proof of the expander walk Chernoff bound, we refer the reader to \cite{expsurvey}\cite{szemerediregularity}\cite{rao-shravas-regv}. An important direct consequence of the expander Chernoff bound is that the mixing time of a $d$-regular expander graph on $n$ vertices is atmost $O(\log n)$. 

Expander graphs have (oftentimes, surprisingly) ubiquitous applications. They were initially studied for the purpose of constructing fault-tolerant networks in \cite{impagliazzo-nisam-wigderson,chaudhari2024peer,anand2024mean,anand2024efficient}, where if a small number of channels (edges) broke down, the system could be made to be still largely intact due to its good connectivity properties if it were modeled as an expander. More recently, they have been used in representation learning theoretic settings (see \cite{deac2022expander}) to create graph neural networks that can propagate information to train models more efficiently, and in decentralized online optimization \cite{chow2016expander,lin2023online2,NEURIPS2023_a7a7180f,pmlr-v247-lin24a}. In coding theory, expander codes (created from linear bipartite expanders - see \cite{alonbipartite}) are the only known construction (see \cite{sipser-spielman}) of asymptotically good error-correcting codes which can be decoded in linear time when a constant fraction of symbols are in error.

More recent works that combine ideas from combinatorial topology and algebraic geometry have also led to the exciting study of high dimensional expanders (HDX) which are pure simplicial complexes (hypergraphs that are downwards closed under containment) where the $1$-skeletons are spectral expanders and the links exhibit good expansion properties. We direct the reader to \cite{conlon2019hypergraph} and \cite{gotlib2023high}.

One of the most important applications of expanders (which is the topic of this thesis) is on derandomization and in pseudorandomness. Suppose that there is a randomized algorithm for a language $L$ using $n$ bits such that: If a string $x\in L$, then the algorithm accepts with probability $1$. If a string $x\notin L$, then the algorithm rejects with probability atleast $1/2$. Our goal is to reduce the error probability of the algorithm. If we repeat the algorithm $t$ times then the error probability goes down to $1/2^t$, which is ideal. However, the number of random bits used by the algorithm is then equal to $nt$, which is very large. One work around is to ``reuse the randomness by weakening our independent choices to correlated choices on an expander graph'' \cite{guruswami_berkeley_lecture_notes}. If we start at a random vertex in $G$ (which is a random number in $\{0,\dots,n\}$ which uses $\log n$ random bits) and pick random neighbors of $v$, then since a good expander has degree $d=O(1)$ and since we need $\log d = O(1)$ bits, we can continue this process till we pick $t$ vertices overall, and the overall number of random bits that we would need would be equal to $\log n+O(t)$. Further, by the expander mixing lemma, for $t\gg O(\log n)$ the sequence of vertices will still be extremely close to uniformly random. 

This makes expander graphs invaluable in the field of pseudorandomness. Consider the $t$-step expander random walk which generates a sequence of vertices $v_0, \dots, v_{t-1}$. We are then interested in the degree to which $(v_0, \dots, v_{t-1})$ ``fool'' classes of test functions, where we say that a test-function $T$ is $\epsilon$-fooled by a pseudorandom function $g:X\to[\chi]$ if the statistical distance between distributions $T(g(X))$ and $T(U)$ (here $U$ is the uniform distribution on $[\chi]$), is less than $\epsilon$.

Ta-Shma's breakthrough construction of optimal $\epsilon$-balanced codes \cite{tashma} which showed that expander random walk can fool the extremely sensitive parity function led to an exciting series of results that showed the pseudorandomness of expander random walks for increasingly many classes of test functions. \cite{guruswami_et_al:LIPIcs.ITCS.2021.48} introduces the `sticky random walk', a canonical two-vertex expander which can be thought of as a Markov chain on two states, where the probability of switching between states is $\frac{1+\lambda}{2}$ and the probability of staying at the same state is $\frac{1-\lambda}{2}$. \cite{guruswami_et_al:LIPIcs.ITCS.2021.48} goes on to use the discrete orthogonal Krawtchouk functions to show that the Hamming weight distribution of the sticky random walk can fool any symmetric function with error $O(\lambda)$, where $\lambda < 0.16$. \cite{cohen-peri-tashma}\cite{cohen_et_al:LIPIcs.ICALP.2022.43} then used Fourier analysis to expand this result. Specifically, they showed that test functions computed by $\mathrm{AC}^0$ circuits and symmetric functions are fooled by the random walks on the full expander random walk, but only for balanced binary labelings. These works culminate in \cite{golowich_et_al:LIPIcs.CCC.2022.27,anand2025towards} which establishes that random walks on expanders where the vertices are labeled from an arbitrary alphabet can fool symmetric functions (upto an $O(\lambda)$ error), permutation branching programs, and low-depth $\mathrm{ACC}^0$-circuits. Specifically, we are interested in the result concerning symmetric functions (Theorem 2) which we restate below.

\emph{Fooling symmetric functions (Corollary 4 of \cite{golowich_et_al:LIPIcs.CCC.2022.27})}. For all integers $t \geq 1$ and $p \geq 2$, let $G =
(G_i)_{1\leq i\leq t-1}$ be a sequence of $\lambda$-spectral expanders on a shared vertex set $V$ with labeling $\mathrm{val} : V \to [p]$
that assigns each label $b \in [p]$ to $f_b$-fraction of the vertices. Then, for any label $b$, we have that the total variation distance between the number of $b$'s seen in the expander random walk and the uniform distribution on $[p]$ has the following bound (where $[\Sigma(Z)_b]$ counts the number of occurrences of $b$ in $Z$):
\[\mathrm{TVD}([\Sigma(\mathrm{RW}^t_\mathcal{G})]_b, [\Sigma(U[d])]_b) \leq O\left(\left(\frac{p}{\min_{b\in [p]} f_b}\right)^{O(p)}\cdot \lambda\right)\]

\cite{golowich_et_al:LIPIcs.CCC.2022.27} asks whether the $(\frac{p}{\min_{b\in [p]} f_b})^{O(p)}$ dependence in the upper bound of the total variation distance is tight. In this paper, we answer in the negative for a family of graphs. Specifically, we present a family of generalized sticky random walks (where the alphabet size can be arbitrarily large), where in \Cref{theorem 5.3} we find that the optimal TVD is $O(\lambda)$, for $\lambda < 0.27$, whereas Corollary 4 in \cite{golowich_et_al:LIPIcs.CCC.2022.27} predicts a bound of $O(\lambda p^{2p})$, which provides evidence that the $(\frac{p}{\min_{b\in[p] f_b}})^{O(p)}$ factor is not tight. \cite{guruswami_et_al:LIPIcs.ITCS.2021.48} studied the sticky random walk because it was an ``essential step'' to understanding the full expander random walk - specifically, theorem $4$ in \cite{guruswami_et_al:LIPIcs.ITCS.2021.48} shows that every $\lambda$-parameterized sticky random walk is bijective with a corresponding expander graph. We extend their result in \Cref{theorem 7.1} by showing that our generalized sticky random walk (parameterized by $\lambda$ and $p$ also correspond to expander graphs with a linearly-reduced spectral expansion of $\lambda p$. We then show that our generalized sticky random walk reduces from \cite{guruswami_et_al:LIPIcs.ITCS.2021.48}'s two-vertex sticky random walk in \Cref{theorem 7.2}. Finally, in \Cref{appendix 2} we provide a novel alternate treatment of the Krawtchouk functions into the complex domain which can be used to attain an $O(\lambda p^{p})$ bound on the TVD.\\

\section{Preliminaries: Notation and Conventions}\label{sec:Preliminaries}
This section describes the basic notation and problem setup that is used throughout the paper.\\

For any $n\in\NN$, let $[n]=\{1,...,n\}$ and $\ZZ_{n}=\{0, ..., n-1\}$. Next, let $[n]^k$ denote $k$ copies of elements in $[n]$, and let $\ZZ_n^k$ denote $k$ copies of elements in $\ZZ_n$. Furthermore, let $\genfrac() {0pt}{1}{[n]}{k}$ denote the set of all $k$-sized subsets of $[n]$, which has cardinality $\genfrac() {0pt}{1}{n}{k}$. For any $n$-bit string $s$, let $|s|$ denote the Hamming weight (the Hamming distance from $0^n$) of $s$. Similarly, let $|s|_i$ denote the number of $i$'s in $s$. We generalize the notion of counting the number of occurrences of any character $\chi\in \ZZ_{p}$ for $p\geq 2$ with the symmetric function $\Sigma(x):\ZZ_{p}^n \rightarrow \ZZ_n^p$, where $\Sigma(x)$ is a vector that counts the number of occurrences of each $\chi\in \ZZ_{p}$. Specifically, for all $\chi\in \ZZ_{p}$ and for all $x\in \ZZ_{p}^n$, we can write that $[\Sigma(x)]_\chi= |\{i\in x: x_i=\chi\}|=|x|_\chi$. \\

Let $\mathrm{Ber}(q)$ denote the Bernoulli distribution on $\{0,1\}$, such that if $X\sim \mathrm {Ber}(q)$, then $\Pr[X=1]=q$ and $\Pr[X=0]=1-q$. Next, let $\mathrm{Bin}(n, 1/2)$ denote the binomial distribution of $\sum_{i=1}^n b_i$ with independent choices of $b_i \sim$ Ber($1/2$). Let $\mathrm U_p^n = \mathrm U[\{0,\dots,p-1\}]^n$  denote $n$ samples of the uniform distribution on $\ZZ_p$, where each number is sampled with probability $1/p$. Then, $[\Sigma(\mathrm U_p^n)]_0$ reports the number of $0$'s in an $n$-bit sample from the uniform distribution on $\ZZ_p$. Furthermore, we write that $x\in A$ if $x$ is an element of $A$, and $x \in_U A$ if $x$ is an element chosen uniformly randomly from $A$. Finally, we use $\overline{C}$ to denote the complement of a set $C \subseteq \Omega$, and for any two sets $A,B\subseteq\Omega$, we define their symmetric difference $A \Delta B$ as $(A\cap \overline{B})\cup (B\cap\overline{A})$.\\

\begin{definition}[$(n,d,\lambda)$-expanders]
\emph{A $d$-regular graph $G=(V,E)$ where $|V|=n, |E|=m$ is said to be an $(n, d, \lambda)$-expander if (for eigenvalues $1=\lambda_1 \geq \lambda_2 \geq \cdots \lambda_n=-1$ of its degree-normalized adjacency matrix $A$) the second largest magnitude of the eigenvalue of its adjacency matrix $A$ is atmost $\lambda$. In other words, $\max_{i\neq 1}|\lambda_i| \leq \lambda$. Intuitively, the spectrum ($\lambda_1, ..., \lambda_n$) of an expander graph approximates the spectrum of the complete graph, which makes expanders a spectral sparsification of the clique.}\\
\end{definition}

\begin{definition}[Sticky Random Walk]
    \emph{The Sticky Random Walk (SRW) $S(n,\lambda)$ is a distribution on $n$-bit strings that represent $n$-step walks on a Markov chain with states $\{0,1\}$ such that for each $s\sim S(n,\lambda), \Pr[s_{i+1}=b|s_i=b] = \frac{1+\lambda}{2}$, for $b\in\{0,1\}$, and $s_1\sim $ Ber($1/2$) such that $\Pr[s_1=0]=\Pr[s_1=1]=1/2$. As $\lambda \to 0$, the distribution of strings from the Markov chain converges to the distribution of $n$ independent coin-flips.}
\end{definition}

\begin{center}
\includegraphics[scale=0.25]{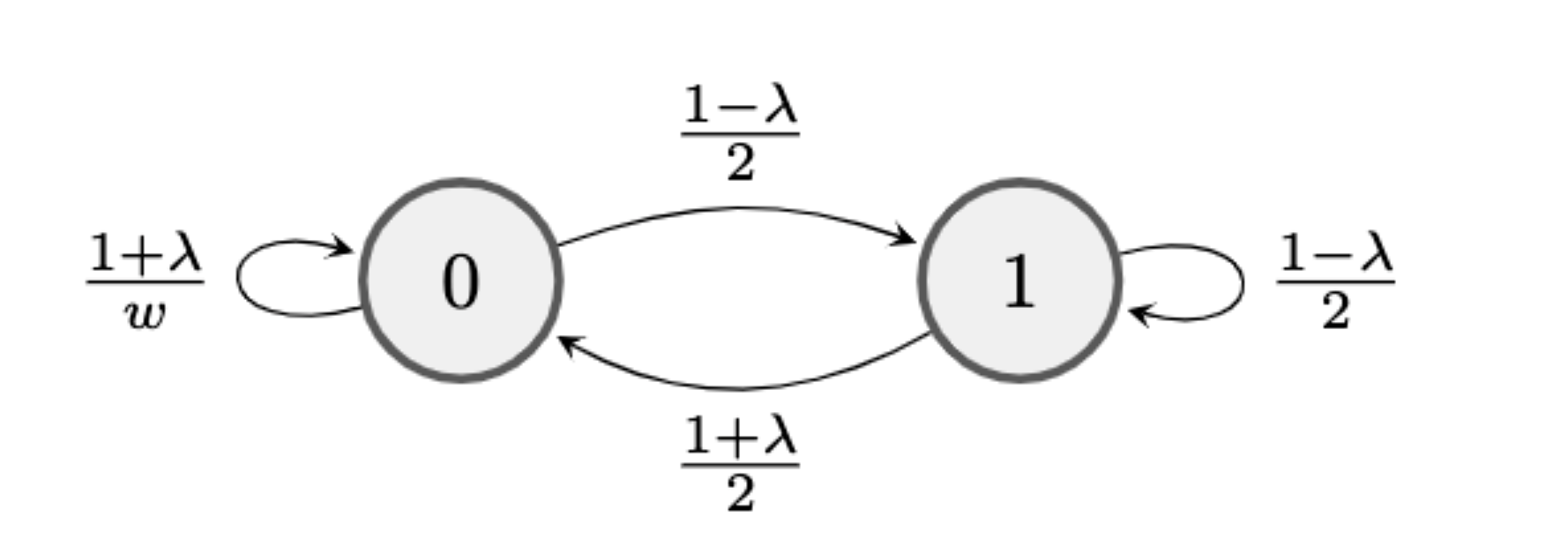}

\textbf{Figure 1:} The Markov chain of the sticky random walk $S(n,\lambda)$.
\end{center}

\begin{definition}[Krawtchouk Functions]
    \emph{The Krawtchouk function $K_k:\ZZ_{n+1} \rightarrow \RR$ is defined, for $\ell\in \ZZ_{n+1}$ and an arbitrary $n$-bit string $\alpha$ where $|\alpha| = \ell$, ($|\cdot|$ is the number of $0$'s) such that:} \[K_k(\ell) := 
  \sum_{\substack{y\in\{0,1\}^n \\ |y|=k}} (-1)^{\alpha \cdot y} = \sum_{t=0}^k (-1)^t \genfrac(){0pt}{0}{\ell}{t}\genfrac(){0pt}{0}{n-\ell}{k-j}\]
\end{definition}

\begin{lemma}[Krawtchouk orthogonality]
\emph{The Krawtchouk function is orthogonal with respect to functions mapping $[n]_0 \to \RR$ with respect to the inner product. The proof of this is deferred to \Cref{technical_lemmas}. Specifically}:
\[\langle K_r, K_s \rangle = \EE_{b\sim\mathrm{Bin}(n,1/2)}[K_r(b)K_s(b)] = \begin{cases}
0, & \text{if } r\neq s \\
\genfrac(){0pt}{4}{n}{s}, & \text{if } r=s\end{cases}\]
\end{lemma}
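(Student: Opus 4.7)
The plan is to evaluate the inner product by recognizing that a uniformly random string $\alpha \sim \{0,1\}^n$ has Hamming weight $|\alpha| \sim \mathrm{Bin}(n,1/2)$, and since the definition of $K_k(\ell)$ depends only on $\ell = |\alpha|$ (not on the particular $\alpha$), we can rewrite
\[
\langle K_r, K_s\rangle \;=\; \mathbb{E}_{b\sim\mathrm{Bin}(n,1/2)}[K_r(b)K_s(b)] \;=\; \mathbb{E}_{\alpha \sim \{0,1\}^n}\bigl[K_r(|\alpha|)\,K_s(|\alpha|)\bigr].
\]
This reparametrization is the key move: it converts the one-dimensional sum over weights weighted by binomial coefficients into a clean uniform expectation over the Boolean cube, where Fourier/character orthogonality becomes directly usable.

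Next, I would unfold the definition of $K_r$ and $K_s$ inside the expectation and swap the finite sums with the expectation:
\[
\mathbb{E}_\alpha\!\left[\sum_{|y|=r}(-1)^{\alpha\cdot y}\sum_{|z|=s}(-1)^{\alpha\cdot z}\right]
= \sum_{\substack{y:\,|y|=r \\ z:\,|z|=s}} \mathbb{E}_\alpha\!\left[(-1)^{\alpha\cdot (y\oplus z)}\right],
\]
where I used that $(-1)^{\alpha\cdot y}(-1)^{\alpha\cdot z} = (-1)^{\alpha\cdot(y\oplus z)}$ (the integer sum $\alpha\cdot y + \alpha \cdot z$ agrees mod $2$ with $\alpha\cdot(y\oplus z)$). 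Then by independence of the coordinates of $\alpha$,
\[
\mathbb{E}_\alpha\!\left[(-1)^{\alpha\cdot(y\oplus z)}\right] \;=\; \prod_{i=1}^n \mathbb{E}_{\alpha_i}\!\left[(-1)^{\alpha_i(y_i\oplus z_i)}\right] \;=\; \mathbf{1}[y = z],
\]
since each coordinate contributes $1$ when $y_i \oplus z_i = 0$ and $0$ otherwise.

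Plugging back in, the double sum collapses to $\sum_{|y|=r,\,|z|=s}\mathbf{1}[y=z]$, which is $0$ when $r\neq s$ (no pair can satisfy $y=z$ with different weights), and equals the number of strings of weight $s$, namely $\binom{n}{s}$, when $r=s$. This yields exactly the claimed formula.

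The only non-routine point to verify is the well-definedness used in the first step: that $K_k(\ell)$ really is a function of $\ell$ alone. This follows because for any two $n$-bit strings $\alpha,\alpha'$ with $|\alpha|=|\alpha'|=\ell$, there is a coordinate permutation $\pi$ sending one to the other, and applying $\pi$ to the summation index $y$ leaves the set $\{y : |y|=k\}$ invariant while mapping $\alpha\cdot y$ to $\alpha'\cdot \pi(y)$. I expect this symmetry check, together with the mod-$2$ simplification of the exponent, to be the main bookkeeping step; the rest is character orthogonality on $\mathbb{F}_2^n$.
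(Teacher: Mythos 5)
Your proof is correct and takes essentially the same approach as the paper's proof in \Cref{technical_lemmas}: both reduce the binomial-weighted inner product to an expectation over a uniformly random $\alpha\in\{0,1\}^n$ (equivalently a random subset $A\subseteq[n]$), observe that the cross-terms vanish by $\mathbb{F}_2^n$-character orthogonality, and count the $\binom{n}{s}$ surviving diagonal contributions when $r=s$. The paper packages the vanishing step probabilistically via the symmetric difference $B\Delta C$ of two random weight-$r$ and weight-$s$ supports and the event $B=C$, whereas you keep $y,z$ in a deterministic double sum and use the coordinate-by-coordinate product $\mathbb{E}_\alpha[(-1)^{\alpha\cdot(y\oplus z)}]=\mathbf{1}[y=z]$; these are the same computation, with your reparametrization $\mathbb{E}_{b\sim\mathrm{Bin}(n,1/2)}=\mathbb{E}_{\alpha\sim\{0,1\}^n}$ and explicit well-definedness check being a somewhat cleaner presentation of what the paper cites from Samorodnitsky.
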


\begin{lemma}[Krawtchouk invariance]
\emph{The Krawtchouk function is invariant against choices of $\alpha$ which satisfy $|\alpha|=\ell$. This statement is proven in \cite{Samorodnitsky98onthe}. We formally state this property below:}
\[\EE_{\substack{|L|=k \\ \mathrm{Fixed}\text{ }A \in \ZZ_p^n \\ |A|=\ell} }[(-1)^{A\cdot B}] = \EE_{\substack{|L|=k \\ \mathrm{Random}\text{ }A' \in \ZZ_p^n \\ |A'|=\ell} }[(-1)^{A'\cdot B}]\]
\end{lemma}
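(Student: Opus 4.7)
The plan is to reduce the claim to the statement that the Krawtchouk sum $\sum_{|B|=k}(-1)^{A\cdot B}$ depends on $A$ only through its Hamming weight $\ell=|A|$. Given this invariance, the right-hand side --- an expectation over a uniformly random $A'$ subject to $|A'|=\ell$ --- is an average of the same constant value that appears on the left-hand side for any fixed $A$ with $|A|=\ell$, so the two expectations coincide.

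To establish the inner invariance, I would use a coordinate-permutation argument. Pick any two strings $A, A'$ with $|A|=|A'|=\ell$; since the symmetric group $S_n$ acts transitively on Hamming spheres, there exists a permutation $\pi\in S_n$ satisfying $\pi(A)=A'$. The two observations I need are: (i) $\pi$ permutes $\{B : |B|=k\}$ bijectively, because permuting coordinates preserves Hamming weight; and (ii) the standard inner product is invariant under joint coordinate permutation, i.e.\ $\pi(A)\cdot \pi(B)=A\cdot B$. Reindexing the sum over $B$ via the substitution $B\mapsto \pi(B)$ then transforms $\sum_{|B|=k}(-1)^{A\cdot B}$ into $\sum_{|B'|=k}(-1)^{A'\cdot B'}$, which is precisely the required permutation-invariance of the Krawtchouk sum.

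The remaining averaging step is immediate: having shown that the inner sum takes a common value across all $A$ with $|A|=\ell$, taking an expectation over uniformly random such $A'$ recovers that same value, giving the stated equality. I do not expect a genuine obstacle --- the argument is a routine symmetry/averaging maneuver (hence the citation to \cite{Samorodnitsky98onthe}), and the only care required is notational, namely tracking the action of $\pi$ on both arguments of the bilinear pairing. The same template would generalize to any symmetric pairing, so an analogous statement over $\ZZ_p^n$ with characters $\omega_p^{A\cdot B}$ in place of $(-1)^{A\cdot B}$ would follow by the identical permutation argument.
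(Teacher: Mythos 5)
Your argument is correct and is the standard symmetry proof: two strings of equal Hamming weight are related by a coordinate permutation $\pi$, which simultaneously permutes the weight-$k$ sphere of $B$'s and preserves the pairing $A\cdot B$, so the inner Krawtchouk sum is constant on Hamming spheres, and averaging over random $A'$ then does nothing. Note, however, that the paper does not actually supply a proof of this lemma in the main text; it simply cites \cite{Samorodnitsky98onthe}. The closest thing to an in-paper proof is the appendix lemma for the complex-exponential analogue ($\omega_p^{\langle A,B\rangle}$ in place of $(-1)^{A\cdot B}$), which proceeds quite differently --- and, as written, problematically. That argument states the claim with \emph{both} $A$ and $B$ fixed (so that the ``expectation'' on the left is a degenerate single value depending on $A$, and the two sides cannot in general be equal), it moves a reciprocal through an expectation (``$1/\EE[\cdot] = \EE[1/\cdot]$''), and it asserts that $A-A'$ is uniform when $A'$ is uniform only on a Hamming sphere rather than on all of $\ZZ_2^n$. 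Your permutation argument sidesteps all of these issues, makes clear that the expectation must be taken over $B$, and, as you observe, generalizes verbatim to the $\omega_p$ version since it relies only on the coordinate-permutation invariance of a bilinear pairing; it is therefore both a correct proof of the main-text lemma and a sound replacement for the appendix's argument.
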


\begin{definition}[Total variation distance] 
    \emph{Given a measure space $(\Omega, \mathcal{F}, \mu)$ and a $\sigma$-algebra $\mathcal{A}\subseteq \mathcal{F}$, the total variational distance $d_{\text{TV}}(\mu_1, \mu_2)$ between probability measures $\mu_1, \mu_2:\mathcal{F}\rightarrow\RR$ is \[d_{\text{TV}}(\mu_1, \mu_2) = \sup_{A\in\mathcal{A}}|\mu_1(A)-\mu_2(A)|\] Similarly, the $\ell_1$ distance between $\mu_1$ and $\mu_2$ is defined as $d_{\ell_1}(\mu_1,\mu_2)=2d_\text{TV}(\mu_1,\mu_2)$. Additionally, for countable $\Omega$ and $\mathcal{A}=2^\Omega$, we have $d_{\ell_1}(\mu_1, \mu_2) = \sum_{x\in\Omega}|\mu_1(x)-\mu_2(x)|$. Transitively,
\[d_\text{TV}(\mu_1, \mu_2) = \frac{1}{2}\sum\limits_{x\in\Omega}|\mu_1(x)-\mu_2(x)|\]}
\end{definition}

\begin{definition} \label{definition: 5 (primitive root of unity)} [Primitive root of unity]
    \emph{For $p\geq 2$, let $\omega_p$ denote the $p^{th}$ primitive root of unity if it satisfies $(\omega_p)^p=1$, and if there does not exist $q\in\NN$ where $q<p$ such that $(\omega_p)^q=1$. Specifically, the multiplicative order of the $p^{th}$ primitive root of unity must be $p$. Then, for $1\leq k<p$, we must have that
\[\sum\limits_{j=0}^{p-1} (\omega_p)^{k j} = \omega_p^{k\cdot 0} + \omega_p^{k\cdot 1} + ... + \omega_p^{k\cdot(p-1)} =0\]}
\end{definition}

\section{A Generalization of the Sticky Random Walk}\label{sec:generalizing}

We consider the case where the vertices of the sticky random walk (SRW) can be labeled with an arbitrary alphabet $\ZZ_p$, since a decomposition of the vertex-set $V = \{0,\dots,p-1\} = V_1 \sqcup ... \sqcup V_q$ for $q\geq 2$, could allow us to model random walks where the probability of transitioning between different states is asymmetric, while allowing us to study the pseudorandomness of random walks on graphs with vertices with more complex labelings which has already been explored in \cite{golowich_et_al:LIPIcs.CCC.2022.27}. This section generalizes the sticky random walk on $p$ characters, and provides the context for bounding the total variation distance between the sticky random walk and $\mathrm U_p^n$.\\

\begin{definition}[The generalized sticky random walk] \emph{The generalized sticky random walk $S(n, p, \lambda)$ is an $n$-step long, $p$-symbol walk on a Markov chain with $p$ states $\ZZ_p$ labeled as vertices on a complete graph with self-loops $J_p$, where $s_0 \in_U \ZZ_p$ and at each subsequent step, we either stick to the same state with probability $\frac{1}{p} + (p - 1)\lambda$, or change to any other state with uniform probability $\frac{1}{p} - \lambda$. So, for instance, comparing $S(n, 4, \lambda)$ to $S(n, 4, 0) = U_4^n$ (the uniform random walk on 4 vertices) yields the following Markov chain graphs:}

\begin{center}    
    \includegraphics[scale=0.18]{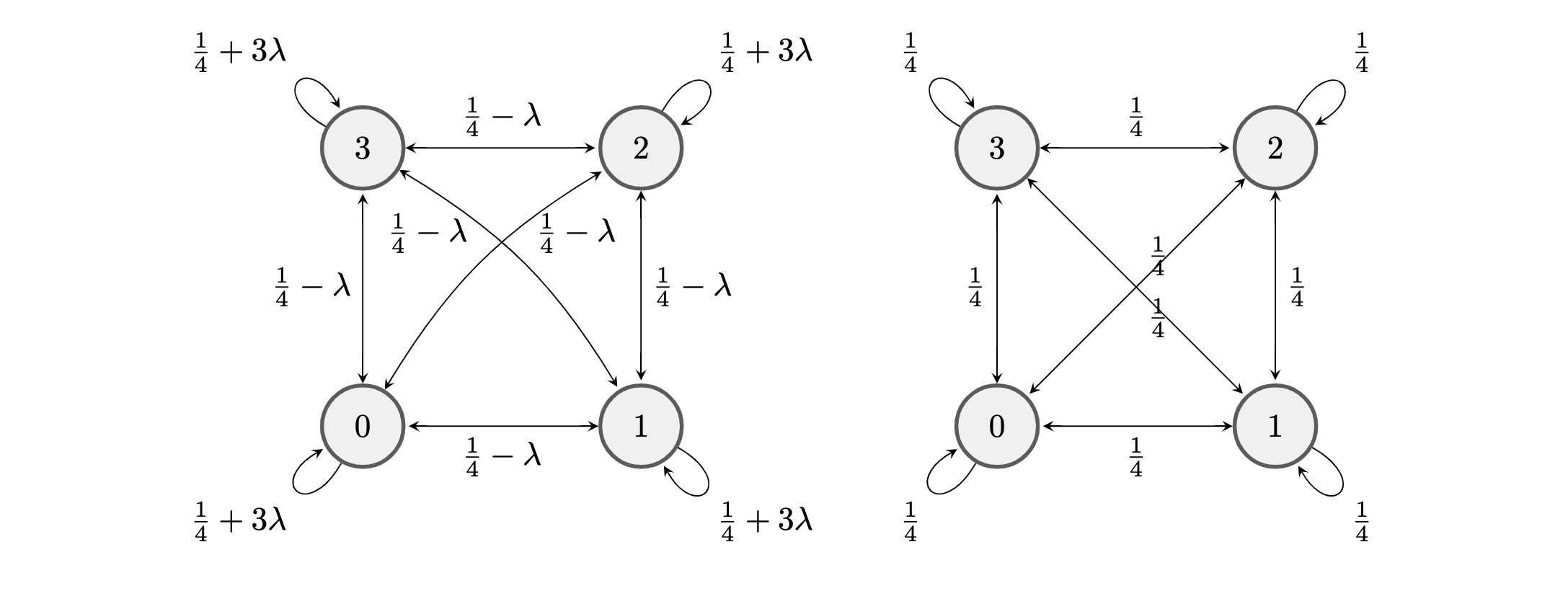}
    \textbf{\newline Figure 2: } Markov chains of the generalized sticky random walk on 4 states and the uniform distribution on $4$ states. The figure on the left corresponds to $S(n,4,\lambda)$, the $\lambda$-biased sticky random walk on four vertices, and the figure on the right corresponds $S(n,4,0)=U_4^n$, the unbiased random walk on four vertices.
\end{center}
\end{definition}
\vspace{0.1cm}

\begin{proposition}[Probability Invariance Under Permutations]
For any $x_1, x_2 \in \ZZ_p^n$, we have $\text{Pr}[x_1]=\text{Pr}[x_2]$ iff $|({i, i + 1})|$ such that $(x_1)_i = (x_1)_{i+1}$ is equal to $|({j, j + 1})|$ such that $(x_2)_j = (x_2)_{j+1}$. This can be shown by considering the products of conditional probabilities on each state. A slight weakening of this statement is that for any permutation $\pi$ such that $\pi:\ZZ_p \rightarrow \ZZ_p$, we must have that $\text{Pr}[x_1 ... x_n] = \text{Pr}[\pi(x_1)...\pi(x_n)]$. Consider the case of $p=2$. Then, the lemma yields that $\text{Pr}(x) = \text{Pr}(\bar{x})$, which says that inverting the labels of a string from the sticky random walk does not change its probability. This proposition extends the same argument to all $p \in \NN$ for $p\geq 2$.\\
\end{proposition}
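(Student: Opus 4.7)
The plan is to start by writing down the probability of an arbitrary string $x = x_1 \cdots x_n \in \ZZ_p^n$ produced by $S(n,p,\lambda)$ as an explicit product. Since $x_1$ is drawn uniformly, it contributes a factor of $1/p$. Thereafter, each transition $x_i \to x_{i+1}$ contributes either $\frac{1}{p} + (p-1)\lambda$ (when $x_i = x_{i+1}$, i.e., the walk sticks) or $\frac{1}{p} - \lambda$ (when $x_i \neq x_{i+1}$, i.e., the walk switches). Writing $k(x) := |\{i \in [n-1] : x_i = x_{i+1}\}|$ for the number of consecutive matches, this yields the closed form
\[
\Pr[x] \;=\; \frac{1}{p}\left(\tfrac{1}{p} + (p-1)\lambda\right)^{k(x)}\left(\tfrac{1}{p} - \lambda\right)^{(n-1)-k(x)}.
\]
The entire argument will be driven by this formula, so the first step is just to justify it from the definition of the generalized sticky random walk.

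The forward direction of the iff is then immediate: if $k(x_1) = k(x_2)$, the two probabilities are identical term by term. For the reverse direction, I would observe that the function $k \mapsto \bigl(\tfrac{1}{p}+(p-1)\lambda\bigr)^{k}\bigl(\tfrac{1}{p}-\lambda\bigr)^{(n-1)-k}$ is strictly monotone in $k$ whenever $\lambda \neq 0$, since the ratio of the two base quantities equals $\frac{1 + p(p-1)\lambda}{1 - p\lambda} \neq 1$. Therefore equal probabilities force equal values of $k$, which is exactly the reverse implication. (The degenerate case $\lambda = 0$ would need to be flagged, but in that regime $S(n,p,0) = U_p^n$ and the claim is vacuous for the application intended by the paper.)

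For the weakening concerning an arbitrary permutation $\pi : \ZZ_p \to \ZZ_p$, I would use the key observation that $\pi$ is a bijection, so for every pair of indices $i, i+1$ one has $x_i = x_{i+1}$ if and only if $\pi(x_i) = \pi(x_{i+1})$. Consequently $k(\pi(x_1)\cdots\pi(x_n)) = k(x_1 \cdots x_n)$, and applying the closed-form expression above gives $\Pr[x_1 \cdots x_n] = \Pr[\pi(x_1)\cdots\pi(x_n)]$. The $p=2$ special case is then recovered by taking $\pi$ to be the non-identity involution on $\{0,1\}$, which sends $x$ to $\bar x$.

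There is no serious obstacle here: once the closed form for $\Pr[x]$ is written out, the proposition is bookkeeping. The only point requiring care is the reverse direction of the iff, where one must verify that the two transition weights are genuinely distinct (equivalently, that the product is a strictly monotone function of $k$) in order to invert the formula; this is why I would explicitly check $\frac{1}{p} + (p-1)\lambda \neq \frac{1}{p} - \lambda$ under the paper's standing assumption $\lambda > 0$.
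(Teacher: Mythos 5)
Your proof is correct and follows the same route the paper sketches (``considering the products of conditional probabilities on each state''): you write the explicit closed form $\Pr[x] = \frac{1}{p}\bigl(\tfrac{1}{p}+(p-1)\lambda\bigr)^{k(x)}\bigl(\tfrac{1}{p}-\lambda\bigr)^{n-1-k(x)}$ with $k(x)$ the number of consecutive matches, from which the forward implication, the permutation invariance, and the $p=2$ specialization all follow by inspection. Your monotonicity observation (the ratio of the two transition weights differs from $1$ when $\lambda \neq 0$) is a genuinely needed detail for the ``only if'' direction that the paper leaves implicit, and you are right to flag it.
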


\begin{proposition}[Krawtchouk Orthogonality]\label{prop: 2}
\emph{The orthogonality of the Krawtchouk function $K_k(\ell)$ implies that for any function $f:\ZZ_{n+1}\rightarrow\RR$, there exists a unique expansion $f(\ell) = \sum\limits_{k=0}^n \hat{f}(k) K_k(\ell)$, where for $0\leq k\leq n$,} \[\hat{f}(k) = \EE
 \bigg[{{n}\choose {k}}f(b)K_k(b)\bigg]\]
\end{proposition}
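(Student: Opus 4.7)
The plan is to prove this as a standard orthogonal basis expansion, leveraging the Krawtchouk orthogonality lemma stated just above. The key observation is that the space of real-valued functions $f:\mathbb{Z}_{n+1}\to\mathbb{R}$ is $(n+1)$-dimensional, and the set $\{K_0, K_1, \dots, K_n\}$ has exactly $n+1$ elements. So if one can show they are linearly independent, they automatically form a basis, and the uniqueness of the expansion follows immediately.

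First I would argue linear independence from orthogonality: suppose $\sum_{k=0}^n c_k K_k \equiv 0$. Taking the inner product $\langle \cdot, K_j\rangle$ with respect to $b\sim \mathrm{Bin}(n,1/2)$ on both sides and applying the Krawtchouk orthogonality lemma, every cross-term vanishes and only $c_j\langle K_j, K_j\rangle = c_j\binom{n}{j}$ survives, forcing $c_j=0$. Hence $\{K_k\}_{k=0}^n$ is an orthogonal basis of $\mathbb{R}^{\mathbb{Z}_{n+1}}$, which guarantees that every $f$ admits a unique expansion $f(\ell)=\sum_{k=0}^n \hat f(k) K_k(\ell)$.

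Next I would extract the coefficient formula by the same inner-product trick: applied to $f = \sum_k \hat f(k) K_k$, we get
\[
\EE_{b\sim \mathrm{Bin}(n,1/2)}\!\big[f(b) K_j(b)\big] \;=\; \sum_{k=0}^n \hat f(k)\,\EE[K_k(b)K_j(b)] \;=\; \hat f(j)\binom{n}{j},
\]
so $\hat f(j)$ is determined by the displayed expectation (up to the $\binom{n}{j}$ factor that the proposition absorbs into its formula). Rearranging gives the claimed expression for $\hat f(k)$.

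There is no real obstacle here; the argument is essentially the textbook proof that any orthogonal spanning family in a finite-dimensional inner-product space yields a unique expansion with coefficients computed by the inner-product projection. The only mildly delicate point is keeping straight that the relevant inner product is the \emph{binomially-weighted} one, $\langle g,h\rangle = \EE_{b\sim\mathrm{Bin}(n,1/2)}[g(b)h(b)]$, rather than the uniform sum over $\mathbb{Z}_{n+1}$; this is exactly the inner product under which Lemma 1 gives orthogonality with squared norm $\binom{n}{k}$, so the normalization lines up with the formula stated in the proposition.
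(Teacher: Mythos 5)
Your argument is the standard orthogonal-basis expansion proof, and it is correct in substance; the paper states this proposition without a proof, treating it as a direct consequence of the orthogonality lemma, which is exactly what you spell out. Linear independence of $\{K_0,\dots,K_n\}$ from orthogonality in the $(n+1)$-dimensional space, hence basis, hence unique expansion; then taking $\langle\,\cdot\,,K_j\rangle$ against $f=\sum_k\hat f(k)K_k$ and using $\langle K_k,K_j\rangle = \binom{n}{j}\delta_{kj}$ to read off the coefficient — all fine, and you correctly identify the binomially-weighted inner product as the relevant one.

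One point you should not gloss over, though. Your computation gives $\hat f(k)=\binom{n}{k}^{-1}\,\EE_{b\sim\mathrm{Bin}(n,1/2)}[f(b)K_k(b)]$, whereas the proposition as printed shows $\hat f(k)=\EE\!\left[\binom{n}{k}f(b)K_k(b)\right]$, which differs by a factor of $\binom{n}{k}^{2}$. You wave this away with ``the proposition absorbs the factor,'' but that is not an explanation — the two formulas genuinely disagree. Your version is the right one: it is the only one consistent with the orthogonality normalization $\langle K_k,K_k\rangle=\binom{n}{k}$, and it matches how the coefficient is actually used downstream in \Cref{lemma: 3.1} (where $\hat q(k)=\frac{1}{\binom{n}{k}(p-1)^{n-k}}\EE[q(b)K_k(b)]$, reducing at $p=2$ to your formula) and in the appendix restatement of this proposition, which places the binomial in the denominator. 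So the displayed statement in the main body appears to contain a typographical error; state plainly that your derivation yields the reciprocal normalization and that this is the formula the paper itself relies on, rather than suggesting the discrepancy is a harmless bookkeeping choice.
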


\begin{definition}[Probability ratio]
    \emph{Let $q:\ZZ_{n+1}\rightarrow\RR$, where $q(\ell)=\frac{\Pr\limits_{s\sim S}[|s|_0=\ell]}{{n\choose \ell}(p-1)^{n-\ell}}p^n$. Intuitively, $q(\ell)$ is the ratio of the probability of getting a string with $\ell$ 0s from the generalized sticky random walk $S(n,\lambda,p)$ to the probability of getting a string with $\ell$ 0s from $\mathrm U_p^n$}.\\
\end{definition}

\begin{lemma}\label{lemma: 3.1}[Krawtchouk coefficient of the probability ratio]
\emph{Expanding $q(\ell)$ through the Krawtchouk function expansion in Proposition 3.2 yields that:}
\[\hat{q}(k) = \frac{1}{{n\choose k}(p-1)^{n-k}}\EE\limits_{s\sim S(n,p,\lambda)}[K_k(|s|_0)]\]
\end{lemma}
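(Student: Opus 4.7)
The plan is to prove the identity as a direct computation, combining the Krawtchouk expansion of $q$ from Proposition 3.2 with the fact that $q$ is a likelihood ratio between two distributions.

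First, I would apply Proposition 3.2 to $q:\ZZ_{n+1}\to\RR$, which writes $\hat q(k)$ as an expectation involving $q(b)$ and $K_k(b)$, with the normalization fixed by the Krawtchouk orthogonality. The second step is to unpack the definition of $q$: since $\Pr_{s\sim U_p^n}[|s|_0=\ell] = \binom{n}{\ell}(p-1)^{n-\ell}/p^n$, we can recognize $q$ as the likelihood ratio between the law of $|s|_0$ under $S(n,p,\lambda)$ and under $U_p^n$, that is, $q(\ell) = \Pr_{s\sim S}[|s|_0=\ell] / \Pr_{s\sim U_p^n}[|s|_0=\ell]$.

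The Radon--Nikodym structure of $q$ then gives, for any $g:\ZZ_{n+1}\to\RR$,
\[
\sum_\ell \Pr_{s\sim U_p^n}[|s|_0=\ell]\,q(\ell)\,g(\ell) \;=\; \sum_\ell \Pr_{s\sim S}[|s|_0=\ell]\,g(\ell) \;=\; \EE_{s\sim S}[g(|s|_0)].
\]
Specializing to $g = K_k$ produces precisely $\EE_{s\sim S}[K_k(|s|_0)]$, which is the quantity appearing in the claim. Inserting this into the formula of Proposition 3.2 and identifying the prefactor $\frac{1}{\binom{n}{k}(p-1)^{n-k}}$ from the Krawtchouk inner-product normalization under the $p$-ary measure of $|s|_0 \sim U_p^n$ completes the derivation.

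The main subtlety, and the obstacle I anticipate, is establishing the appropriate Krawtchouk orthogonality in the $p$-ary setting: the binary Krawtchouk orthogonality from the preliminaries (which yields $\binom{n}{k}$ under $\mathrm{Bin}(n,1/2)$) must be generalized to produce the norm $\binom{n}{k}(p-1)^{n-k}$ under $\mathrm{Bin}(n,1/p)$, so that the denominator in the claim arises from the $L^2$-norm of $K_k$ against the measure induced by $U_p^n$. Once this generalization is in hand, the remainder of the proof is routine change-of-measure bookkeeping, and recovers the original binary case of \cite{guruswami_et_al:LIPIcs.ITCS.2021.48} when $p=2$.
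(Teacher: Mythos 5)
Your proposal is correct and matches the paper's own proof: the paper likewise applies the expansion coefficient formula to $q$, substitutes the definition of $q(b)$ so that the factor $\frac{\binom{n}{b}(p-1)^{n-b}}{p^n}$ cancels to leave $\Pr_{s\sim S}[|s|_0=b]$, and recognizes the resulting sum as $\EE_{s\sim S}[K_k(|s|_0)]$ — precisely your change-of-measure step, just stated as an algebraic substitution rather than in Radon--Nikodym language. Your flagged subtlety about needing the $p$-ary Krawtchouk norm $\binom{n}{k}(p-1)^{n-k}$ under the law of $|s|_0$ for $s\sim U_p^n$ (rather than the binary norm $\binom{n}{k}$ under $\mathrm{Bin}(n,1/2)$ stated in the preliminaries) is a fair observation about the paper's exposition; the paper implicitly uses this $p$-ary normalization in Proposition 3.2 and only justifies it via the generalized Krawtchouk orthogonality in Appendix B.
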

\begin{proof} Writing the expected value of $K_k(|s|_0)$ using $q(b)$ and $K_k(b)$, we get: \begin{align*}
        \hat{q}(k) &= \frac{1}{{n\choose k}(p-1)^{n-k}}\sum\limits_{b=0}^n {n\choose b} \frac{(p-1)^{n-b}}{p^n} q(b) K_k(b) \\
        &= \frac{1}{{n\choose k}(p-1)^{n-k}}\sum\limits_{b=0}^n \Pr\limits_{s\sim S(n,p,\lambda)}[|s|_0 = b]K_k(b) \quad \text{(substituting $q(b)$)}\\
        &= \frac{1}{{n\choose k}(p-1)^{n-k}} \EE\limits_{s\sim S(n,p,\lambda)}[K_k(|s|_0)] \quad \text{(by definition of $\EE\limits_{s\in S(n,p,\lambda)}[K_k(|s|_0)]$)}\qedhere
\end{align*}\end{proof}

\begin{lemma}
\emph{For $s\in S(n,p,\lambda)$, we have that $\Pr[|s|_0=\ell]=\frac{1}{p^n} \sum\limits_{k=0}^n K_\ell(k) \EE\limits_{s\sim S(n,p,\lambda)}[K_k(|s|_0)].$}
\end{lemma}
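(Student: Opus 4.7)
The plan is a direct Krawtchouk inversion: apply the expansion from Proposition~\ref{prop: 2} to the function $q$, substitute the coefficient formula from Lemma~\ref{lemma: 3.1}, translate from $q(\ell)$ back to $\Pr[|s|_0=\ell]$ via the definition of $q$, and finally swap the roles of $\ell$ and $k$ inside the Krawtchouk function using the symmetry identity $\binom{n}{\ell}K_k(\ell)=\binom{n}{k}K_\ell(k)$.

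Concretely, I would first chain Proposition~\ref{prop: 2} with Lemma~\ref{lemma: 3.1} to write
\[
q(\ell) \;=\; \sum_{k=0}^{n} \hat{q}(k)\,K_k(\ell) \;=\; \sum_{k=0}^{n} \frac{K_k(\ell)}{\binom{n}{k}(p-1)^{n-k}}\,\mathbb{E}_{s\sim S(n,p,\lambda)}\!\bigl[K_k(|s|_0)\bigr].
\]
Next, I would unwind the normalization in the definition of $q$, namely $\Pr[|s|_0=\ell] = \frac{\binom{n}{\ell}(p-1)^{n-\ell}}{p^n}\,q(\ell)$, and pull the resulting prefactor into the sum:
\[
\Pr[|s|_0=\ell] \;=\; \frac{1}{p^n}\sum_{k=0}^{n} \frac{\binom{n}{\ell}(p-1)^{n-\ell}}{\binom{n}{k}(p-1)^{n-k}}\,K_k(\ell)\,\mathbb{E}_{s\sim S(n,p,\lambda)}\!\bigl[K_k(|s|_0)\bigr].
\]

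The final step is to invoke the Krawtchouk symmetry $\binom{n}{\ell}K_k(\ell)=\binom{n}{k}K_\ell(k)$, which can be established combinatorially from the explicit formula $K_k(\ell)=\sum_t(-1)^t\binom{\ell}{t}\binom{n-\ell}{k-t}$ via a standard double-counting argument over pairs of weight-$k$ and weight-$\ell$ binary strings. Applying this identity turns $\frac{\binom{n}{\ell}}{\binom{n}{k}}K_k(\ell)$ into $K_\ell(k)$, which matches the statement once the remaining $(p-1)$-powers are collected.

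The main obstacle I anticipate is the careful bookkeeping of the $(p-1)^{n-k}$ and $(p-1)^{n-\ell}$ factors that emerge from the normalizations: the Krawtchouk symmetry used above is the binary identity, while the weighting factors reflect the $p$-ary character counts, so ensuring the two are aligned with the paper's conventions is the subtle point. Once the index swap in the Krawtchouk function is executed and the $(p-1)$ powers are shown to absorb consistently into the definition of $K_\ell(k)$, the identity collapses to the claimed $\frac{1}{p^n}\sum_k K_\ell(k)\,\mathbb{E}[K_k(|s|_0)]$.
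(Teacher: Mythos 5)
Your overall route is exactly the one the paper follows: write $\Pr[|s|_0=\ell]$ in terms of $q(\ell)$, expand $q$ via Proposition~\ref{prop: 2}, substitute Lemma~\ref{lemma: 3.1} for $\hat q(k)$, and then swap the roles of $k$ and $\ell$ inside the Krawtchouk function. However, the specific identity you invoke for that last step — the binary symmetry $\binom{n}{\ell}K_k(\ell)=\binom{n}{k}K_\ell(k)$ — does \emph{not} close the argument here. After substituting Lemma~\ref{lemma: 3.1}, the summand is $\frac{\binom{n}{\ell}(p-1)^{n-\ell}}{\binom{n}{k}(p-1)^{n-k}}K_k(\ell)\,\mathbb{E}[K_k(|s|_0)]$; applying your binary identity turns $\frac{\binom{n}{\ell}}{\binom{n}{k}}K_k(\ell)$ into $K_\ell(k)$, but a factor of $(p-1)^{k-\ell}$ survives, and there is no mechanism in the binary Krawtchouk formula for it to ``absorb.'' The hope you express — that the $(p-1)$ powers fold into $K_\ell(k)$ — is precisely what fails.

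What the paper actually invokes (under the name ``reciprocity relation'') is the $p$-ary Krawtchouk symmetry
\[
\binom{n}{\ell}(p-1)^{n-\ell}K_k(\ell)=\binom{n}{k}(p-1)^{n-k}K_\ell(k),
\]
i.e.\ $\tfrac{K_k(\ell)}{\binom{n}{k}(p-1)^{n-k}}=\tfrac{K_\ell(k)}{\binom{n}{\ell}(p-1)^{n-\ell}}$ (this is the corollary in the appendix, modulo a typo there where $s$ should read $\ell$). With this identity the entire prefactor collapses in one step and the $\frac{1}{p^n}\sum_k K_\ell(k)\,\mathbb{E}[K_k(|s|_0)]$ form follows immediately; with the binary identity it does not. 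So the gap in your proposal is concrete: you need the $(p-1)$-weighted reciprocity relation, not the binary one, and that relation is in turn a consequence of the $p$-ary Krawtchouk orthogonality $\langle K_r,K_s\rangle = \binom{n}{r}(p-1)^{n-r}\delta_{rs}$ under the weight $\Pr[\ell]=\binom{n}{\ell}(p-1)^{n-\ell}/p^n$, rather than of the $\{0,1\}^n$ double-counting argument you sketch.
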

\begin{proof} Writing out $\Pr[|s|_0=\ell]$ in terms of the probability ratio $q(\ell)$, we get:
\begin{align*}
        \Pr[|s|_0=\ell] &= \frac{{n\choose \ell}(p-1)^{n-\ell}}{p^n} q(\ell) \\
        &= \frac{{n\choose \ell}(p-1)^{n-\ell}}{p^n} \sum\limits_{k=0}^n \hat{q}(k) K_k(\ell) \quad \text{(Krawtchouk expansion of $q(\ell)$)}\\
        &= \frac{{n\choose \ell}(p-1)^{n-\ell}}{p^n} \sum\limits_{k=0}^n \frac{K_k(\ell)}{{n\choose k}(p-1)^{n-k}} \EE[K_k(|s|_0)] \quad (\text{Lemma 3.1})\\
        &= \frac{1}{p^n} \sum\limits_{k=0}^n \frac{{n\choose \ell}}{{n\choose k}} \frac{(p-1)^{n-\ell}}{(p-1)^{n-k}} \EE[K_k(|s|_0)] K_k(\ell)\\
        &= \frac{1}{p^n} \sum\limits_{k=0}^n K_\ell(k)  \EE\limits_{s\sim S(n,p,\lambda)}[K_k(|s|_0)] \quad \text{(By the reciprocity relation)} \qedhere
    \end{align*}
\end{proof}

Therefore, we observe that to compute $\Pr[|s|_0 = \ell]$, it is imperative to calculate the expected value of the Krawtchouk function. \Cref{sec:expectations} is devoted to computing $\EE_{s\sim S(n,p,\lambda)}[K_k(|s|_0)]$.

\section{Expectation of the Krawtchouk Function for the p-vertex Sticky Random Walk}\label{sec:expectations}

\begin{definition}[Shift Function]\label{definition:shift}\emph{Given any set $T \subseteq[n]$ such that $|T|=k$, let $a_1 < ... < a_k$ be the elements of $T$ in increasing order. Then, for any $c\in \ZZ_p$, let }
\[\mathrm{shift}_c(T) = \sum\limits_{i=0}^{\floor{|k-c|/p}}(a_{c+i p}-a_{c+i p-1})\]
\emph{Then, for any $c$ such that $k \mod p = -c$, and for any $d\in \ZZ_{n+1}$, let $\phi_c(d)$ denote the number of subsets of $[n]$ of size $k$ such that $\mathrm{shift}_c(T)=d$. Note that for any $t\leq 0, a_t=0$}.\\
\end{definition}

\begin{lemma} \emph{The expected value of the Krawtchouk function is given by: 
\[\EE[K_k(|s|_0)]=\begin{cases}(p-1)^{n-k} \sum\limits_{d=k}^{n-k}\phi_0(d)\lambda^d, & \text{if }c= k\!\!\!\!\mod p \equiv 0 \\ 0, & \text{ if }c= k\!\!\!\!\mod p \not\equiv 0\end{cases}\]}
\end{lemma}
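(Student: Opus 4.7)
My plan is to unpack the Krawtchouk function as a sum over $k$-subsets $T \subseteq [n]$ and then compute each per-subset expectation using the Markov structure of the generalized sticky walk. Setting $X_i := +1$ when $s_i = 0$ and $X_i := -1$ otherwise, and choosing for the string $\alpha$ in the Krawtchouk definition the indicator of $s$'s non-zero positions, gives
\[
\EE[K_k(|s|_0)] \;=\; \sum_{T \in \binom{[n]}{k}} \EE\Big[\prod_{i \in T} X_i\Big],
\]
so the task reduces to computing the joint expectation for a fixed $T = \{a_1 < \cdots < a_k\}$.

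To evaluate this joint expectation, I would exploit the representation $M = p\lambda\, I + (1-p\lambda)\, J/p$ of the walk's $p\times p$ transition matrix, which corresponds to the \emph{stick-or-resample} description: at each step, independently, with probability $p\lambda$ the walk keeps its current state, and with probability $1-p\lambda$ it resamples uniformly from $\ZZ_p$. Since $M^\Delta = (p\lambda)^\Delta I + (1 - (p\lambda)^\Delta)\, J/p$, the increments $u_l := s_{a_l} - s_{a_{l-1}} \bmod p$ are mutually independent, with $u_1$ uniform and $\Pr[u_l = 0] = (p\lambda)^{\Delta_l} + (1-(p\lambda)^{\Delta_l})/p$ for $l \geq 2$, where $\Delta_l = a_l - a_{l-1}$. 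I would then Fourier-expand $X_i = g(s_i)$ on $\ZZ_p$ via the characters $\omega_p^{\kappa x}$ (as in \Cref{definition: 5 (primitive root of unity)}), where $\hat g(0) = -(p-2)/p$ and $\hat g(\kappa) = 2/p$ for $\kappa \neq 0$; substituting and telescoping $\sum_j \kappa_j s_{a_j} = \sum_l \tau_l u_l$ with $\tau_l := \sum_{j \geq l} \kappa_j$ reduces the contribution of each $\vec\kappa \in \ZZ_p^k$ to
\[
\mathbf{1}[p \mid \tau_1]\prod_{l=2}^{k}\Big(\mathbf{1}[p \mid \tau_l] + (p\lambda)^{\Delta_l}\,\mathbf{1}[p \nmid \tau_l]\Big).
\]

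The final step, which I expect to be the main obstacle, is the combinatorial bookkeeping that identifies the resulting double sum (over $\vec\kappa$ and $T$) with $(p-1)^{n-k}\sum_d \phi_0(d)\lambda^d$ when $p \mid k$ and with zero otherwise. I would proceed by case analysis on $k \bmod p$. In the case $k \equiv 0 \pmod p$, the surviving configurations $\vec\kappa$ are exactly those whose tail pattern $\{l : p \nmid \tau_l\}$ is compatible with the $p$-periodic structure of $\mathrm{shift}_0(T) = \sum_i (a_{ip} - a_{ip-1})$; grouping the $\hat g$-weighted Fourier terms by this pattern and then summing over $T$ using $\phi_0(d)$ to collect subsets by shift value should produce the claimed formula, with the prefactor $(p-1)^{n-k}$ emerging either from the uniform weighting of the $p-1$ non-trivial characters at each active coordinate or (equivalently) from the freedom at positions of $[n]\setminus T$. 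In the case $k \not\equiv 0 \pmod p$, the constraint $p \mid \tau_1$ together with the required $p$-periodic vanishing pattern of the $\tau_l$'s admits no aligned configuration, and the Fourier weights $\hat g(0)$ and $\hat g(\kappa\neq 0)$ must conspire to give total cancellation; carefully executing this alignment-and-cancellation argument is the heart of the proof.
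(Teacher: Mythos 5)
Your setup is a valid and genuinely different route from the paper's. You reduce to $\sum_{T}\EE[\prod_{i\in T}X_i]$ (same first step as the paper), but then Fourier-expand the $\pm 1$ indicator $X_i=g(s_i)$ over the characters $\omega_p^{\kappa x}$ of $\ZZ_p$ and telescope via $\tau_l=\sum_{j\geq l}\kappa_j$, working with the coarse increments $u_l=s_{a_l}-s_{a_{l-1}}$ through $M^{\Delta_l}$. The paper instead decomposes $s$ into \emph{per-step} increments $u_j\in\ZZ_p$ and telescopes $(-1)^{s_i}=(-1)^{\sum_{j\leq i}u_j}$ directly. Your route is arguably cleaner for $p>2$, since it avoids the delicate point the paper glosses over: $(-1)^{s_i}$ (which sees only the parity of $s_i\in\ZZ_p$) is not literally the sign $\mathbbm{1}[s_i=0]-\mathbbm{1}[s_i\neq 0]$ that evaluating $K_k(|s|_0)$ requires, and your explicit coefficients $\hat g(0)=-(p-2)/p$ and $\hat g(\kappa\neq 0)=2/p$ handle this correctly.

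That said, there are two concrete gaps. First, the step you yourself flag as ``the heart of the proof''---collapsing the double sum over $\vec\kappa\in\ZZ_p^k$ (weighted by the alternating-sign $\hat g$-coefficients) and over $T$ into $(p-1)^{n-k}\sum_d\phi_0(d)\lambda^d$, and showing total cancellation when $p\nmid k$---is only described, not executed. This is precisely the work the paper's $\mathrm{shift}_c$ function and the count $\phi_c(d)$ are introduced to do; without actually carrying out the alignment-and-cancellation bookkeeping (which is nontrivial because the $\hat g(0)$ weights are negative, so there is no monotone counting argument), the claimed formula is not derived. Second, your transition matrix $M=p\lambda\, I+(1-p\lambda)J/p$ is the one consistent with Definition 3.1's stay-probability $\frac{1}{p}+(p-1)\lambda$, and it gives $\EE[\omega_p^{u}]=p\lambda$ for a nontrivial one-step increment; the paper's proof instead posits the increment law $(1-\lambda)U[\ZZ_p]+\lambda\cdot\mathbbm{1}_0$, which gives $\EE[\omega_p^{u}]=\lambda$. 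These differ by a factor of $p$, so as written your route produces powers of $p\lambda$ rather than the stated $\lambda^d$. This exposes a genuine inconsistency between the paper's definition of $S(n,p,\lambda)$ and its own proof, but your proposal would need to confront it explicitly (e.g., reparameterize $\lambda$, or note that the definition's stay probability should read $\frac{1}{p}+\frac{(p-1)\lambda}{p}$) before the lemma as stated follows from your argument.
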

\begin{proof} By the formula of expected values, we have that:
\begin{align*}
\EE[K_k(|s|_0)] &= \sum\limits_{s\sim S(n,p,\lambda)} \Pr[s] \sum_{\substack{y\in \ZZ_2^n \\ |y|_0 = k}} (-1)^{y\cdot s} \\
&= \sum\limits_{s\sim S(n,p,\lambda)} \Pr[s] \sum_{\substack{y\in \ZZ_2^n \\ |y|_0 = k}} (-1)^{\sum\limits_{i=1}^n y_i\cdot s_i} \\
&= \sum\limits_{s\sim S(n,p,\lambda)} \Pr[s] \sum_{\substack{y\in \ZZ_2^n \\ |y|_0 = k}} \prod\limits_{i=1}^n (-1)^{y_i\cdot s_i}
\end{align*}
We note then that the dot-product on the exponent of $(-1)$ only takes the summation of the element-wise product of $\alpha$ and $y$ for positions on $y$ that are strictly non-zero. Therefore, we can rewrite the summation by considering the indices corresponding to locations of non-zeros in $y$, and instead take the summation of the dot-product along these indices. So, for $T=\{a_1 < ... < a_{n-k}\}$, we have that:
\[\EE[K_k(|s|_0)]=\sum\limits_{s\sim S(n,p,\lambda)} \Pr[s] \sum\limits_{T\in{{[n]}\choose {n-k}}}  \prod\limits_{i\in T}(-1)^{s_i}\]
Further, choosing a $T\in {{[n]}\choose {n-k}}$ implies a choice of $\overline{T} = {{[n]}\choose k}=[n]\setminus T$. Hence, the summation reduces to:\\
\begin{align*}
    \EE[K_k(|s|_0)] &= \sum\limits_{s\sim S(n,p,\lambda)} \Pr[s] \sum\limits_{T\in{{[n]}\choose k}}  \prod\limits_{i\in \overline{T}}(-1)^{ s_i} \\
    &= \sum\limits_{T\in{{[n]}\choose k}} \EE\limits_{s\sim S(n,p,\lambda)} \bigg[ \prod \limits_{i\in \overline{T}}(-1)^{ s_i}\bigg] \quad \text{(definition of expectations)}
\end{align*}
Next, observe that the sticky random walk is a Markov chain where $(-1)^{s_i}=(-1)^{s_{i-1}}$ with probability $1/p +(p-1)\lambda)$. So, we can instead model the transitions of strings from the sticky random walk as random variables $u$, where $u_1$ is uniformly distributed in $\ZZ_p$ and for $i\geq 2$, $u_i$ is uniformly distributed on $(1-\lambda)U[\ZZ_p]+\lambda \cdot \mathbbm{1}_0$. To provide an intuition for this refactorization, $(1-\lambda)U[\ZZ_p]$ is the 'base' probability of switching to any vertex and $\lambda$ is the additional probability of staying on the same vertex. 

Then, since $s_i = \sum_{i\in T} \sum_{j=1}^i u_j$, we write that:
\begin{align*}
        \EE\limits_{s\in S(n,p,\lambda)}\bigg[\prod\limits_{i\in \overline{T}} (-1)^{ s_i}\bigg] &= 
        \EE\limits_{ s\in S(n,p,\lambda)}\bigg[(-1)^{\sum\limits_{i\in \overline{T}} \sum\limits_{j=1}^i u_j}\bigg] \\
        &= \prod\limits_{j=1}^{a_{n-k}} \EE\limits_{s\in S(n,p,\lambda)}\bigg[(-1)^{\sum\limits_{i\in \overline{T}; i\geq j} u_j}\bigg] \quad \text{(independence of $u_j$'s)}
    \end{align*}

When $j=1$, we get that:
\begin{align*}\EE\bigg[(-1)^{\sum\limits_{i\in \overline{T}; i\geq 1} u_1}\bigg] &= \EE[(-1)^{|\overline{T}|u_1}] \\
&= \begin{cases}
1, &\text{if }|\overline{T}| \mod p \equiv 0 \\
0, &\text{otherwise}\end{cases}
\end{align*}
Conversely, when $j\geq 2$, let $T_j = \{i\in \overline{T}; i\geq j\}$. Then, 
\begin{align*}\EE\bigg[(-1)^{\sum\limits_{i\in \overline{T}; i \geq j}u_j}\bigg] &= \EE[(-1)^{|T_j|u_j}] \\
&= \begin{cases}
1, & \text{if } |T_j| \mod p \equiv 0 \\
\EE[(-1)^{u_j}], & \text{ otherwise}\\
\end{cases}
\end{align*}

Next, observe that for $j\geq 2$, $\EE[(-1)^{u_j}]=\lambda$.
\begin{proof} To show this, we write the expression for $u_j, j\geq 2$ in the exponent and take the expected value of $(-1)^{u_j}$.
\begin{align*}
\EE[(-1)^{u_j}] &= \EE[(-1)^{(1-\lambda)U[\mathbb{Z}_p]+\lambda\cdot\mathbbm{1}_0}] \quad \text{(since $u_j \sim (1-\lambda)U[\mathbb{Z}_p]+\lambda \cdot \mathbbm{1}_0$)}
\\
&= \sum\limits_{k=0}^{p-1} (-1)^{k} \Pr[u_j=k] \quad \text{(definition of expectation)}\\
&= (-1)^0 \Pr[u_j=0] + (-1)^1 \Pr[u_j=1] + ... + (-1)^{p-1} \Pr[u_j=p-1] \\
&= \bigg(\frac{1}{p} + \lambda\bigg(\frac{p-1}{p}\bigg)\bigg) - \bigg(\frac{1}{p} - \frac{\lambda}{p}\bigg) + ... + (-1)^{p-1} \bigg(\frac{1}{p} - \frac{\lambda}{p}\bigg)\\
&= \frac{1}{p}\sum\limits_{k=0}^{p-1}(-1)^k - \frac{\lambda}{p}\sum\limits_{k=0}^{p-1}(-1)^k + \lambda (-1)^0 \\
&= \lambda \qedhere
\end{align*}\end{proof}
Then, for $k \mod p \equiv 0$, we have that:
\begin{equation*}\begin{split}
    \EE[K_k(|s|_0)] &= \sum\limits_{T\in{[n]\choose k}}\EE\limits_{\substack{s\sim S(n,p,\lambda)}}\bigg[\prod\limits_{i\in \overline{T}}(-1)^{s_i}\bigg] \\
    &=\sum\limits_{T\in{[n]\choose k}} \prod\limits_{i\in \overline{T}} \EE\limits_{\substack{s\sim S(n,p,\lambda)}}[(-1)^{s_i}] \quad \text{(independence of $(-1)^{s_i}$)}\\
    &= \sum\limits_{T\in{[n]\choose k}}\prod\limits_{j=1}^{a_{n-k}}\lambda \quad \text{(since $|T|=a_{n-k}$)}\\
    &= \sum\limits_{T\in{[n]\choose k}}\lambda^{a_{n-k}}
\end{split}
\end{equation*}
We then parameterize the summation over every possible value of the shift of T (for $k\!\!\mod p \equiv 0$), where the shift function is given in \Cref{definition:shift}.
\begin{align*}
\EE[K_k(|s|_0)] &=  \sum\limits_{d=k}^{n-k} \bigg(\sum_{\substack{T\in{[n]\choose k}\\ \text{shift}_0\text{(T)=d}}} 1\bigg) \lambda^d \\
&=  \sum\limits_{d=k}^{n-k} \phi_0(d) \lambda^d
\end{align*}
This yields the claim.
\end{proof}

\vspace{0.2cm}

\begin{lemma}\emph{For $c\in\NN$ where $0\leq c\leq p$, and for $d\in\NN$ where $k\leq d\leq n-k$, the number of $k$-sized subsets of $[n]$ that satisfy $\text{shift}_c(T)=d$} is:
\[\phi_c(d) = \frac{1}{p^k} \sum_{\substack{T\in{{[n]}\choose k}\\\text{shift}_c(T)=d}}1 = \frac{1}{p^k}{{d-1}\choose{\floor{\frac{|k-c|}{p-1}}-1}}{{n-d}\choose {\floor{\frac{|k-c|}{p-1}}}}\]
\end{lemma}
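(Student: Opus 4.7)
The plan is a two-step stars-and-bars argument on the gap sequence of a $k$-subset of $[n]$. Writing $T = \{a_1 < \cdots < a_k\}$ with the convention $a_0 := 0$, define $g_i := a_i - a_{i-1}$ for $1 \le i \le k$ together with a trailing gap $g_{k+1} := n - a_k$. This puts $\binom{[n]}{k}$ in bijection with tuples $(g_1, \ldots, g_{k+1})$ of non-negative integers summing to $n$, with $g_i \ge 1$ for $1 \le i \le k$ and $g_{k+1} \ge 0$.

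Under this re-parametrization, $\mathrm{shift}_c(T)$ from \Cref{definition:shift} is the sum of the gaps whose indices lie in the arithmetic progression $c, c+p, c+2p, \ldots$ up to the indexing bound — call these the \emph{marked} gaps, and let $m$ denote their number. The condition $\mathrm{shift}_c(T) = d$ then factorizes the count into two independent pieces. The $m$ marked gaps must form a composition of $d$ into $m$ strictly positive parts, contributing $\binom{d-1}{m-1}$ by standard stars-and-bars. The remaining $k+1-m$ gaps (positive except possibly the trailing one) must sum to $n-d$; a parallel stars-and-bars computation, after absorbing the positivity constraints by the usual change of variables, yields the factor $\binom{n-d}{m}$. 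Multiplying the two counts gives the claimed product of binomial coefficients.

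The main obstacle is pinning down the correct value of $m$. \Cref{definition:shift} caps the shift index at $i \le \lfloor |k-c|/p \rfloor$, whereas the target formula features $\lfloor |k-c|/(p-1)\rfloor$, so a careful case analysis is needed to reconcile the two — in particular, distinguishing $c = 0$, where the $i=0$ term $a_0 - a_{-1} = 0$ is vacuous under the convention $a_t = 0$ for $t \le 0$, from $c \ge 1$, and verifying which of the marked gaps actually contribute a non-trivial positive term. I would also have to interpret the prefactor $1/p^k$ in the statement: since the left-hand side $\phi_c(d)$ is a count of subsets and the combinatorial argument above produces an integer, I expect this factor either reflects a normalization chosen to mesh cleanly with the Krawtchouk expectation computation in the preceding lemma, or is a typographical artifact — the underlying combinatorial identity being $\phi_c(d) = \binom{d-1}{m-1}\binom{n-d}{m}$ for the correct value of $m$, with the $1/p^k$ appearing symmetrically on both sides of the displayed equation.
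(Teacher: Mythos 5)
Your gap-sequence reformulation is the same argument the paper uses: both treat $T$ through its consecutive differences, split them into the ``marked'' gaps (those at indices $c, c+p, c+2p,\ldots$) forced to sum to $d$ and the rest forced to sum to $n-d$, and invoke stars-and-bars on each piece independently. So the approach matches. However, what you submitted is a proof \emph{plan}, not a proof: you explicitly defer the two hard steps, and those are precisely where the real work (and the real trouble) lies. First, the number $m$ of marked gaps under the definition of $\mathrm{shift}_c$ is governed by the cap $i \le \lfloor |k-c|/p\rfloor$, giving roughly $\lfloor|k-c|/p\rfloor$ non-trivial marked gaps (for $c=0$ the $i=0$ term is vacuous), whereas the target formula features $\lfloor|k-c|/(p-1)\rfloor$; you flag this mismatch but do not resolve it, and the paper's proof does not resolve it either. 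Second, your two stars-and-bars pieces would naturally give $\binom{d-1}{m-1}$ and $\binom{n-d}{k-m}$ (the unmarked side has $k+1-m$ gaps of which $k-m$ are constrained to be positive), not $\binom{n-d}{m}$; these agree only under an additional relation between $m$ and $k$ that you would need to exhibit. Finally, your suspicion about the prefactor $1/p^k$ is well founded: the argument counts each $T$ exactly once, so no deduplication is needed, and indeed the restatement of this very lemma in \Cref{appendix 2} drops the $1/p^k$ entirely. The paper's own one-line justification for the factor (``divide by the maximum number of repetitions'') does not hold up to scrutiny, so you are right to treat it as something to be reconciled with the surrounding computations rather than as a combinatorial fact to be derived.
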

\begin{proof}
To determine $\phi_c(d)$, we count the total number of ways to choose $a_1\!\!<\!\!a_2\!\!<\!\!...\!\!<\!\!a_k$ such that the lengths of the intervals $(a_c-a_{c-1})+(a_{c+p}-a_{c+p-1})+(a_{c+2p}-a_{c+2p-1})+...=d$, where for any $j\leq 0$, $a_j=0$. To do this, we combine each element-wise interval $(a_{c+ip}, a_{c+ip-1})$ to form a contiguous interval of length $d$ (starting from $a_{c-1}=0$). The remaining contiguous region that excludes these intervals must then have a length of $n-d$. We then abstract the number of ways to count $a_1<...<a_k$ by counting the number of intervals that have a length of $d$ when combined, such that the remaining intervals have a length $n-d$. From a length of $d-1$ (accounting for $a_0=0$), we need to select intervals that form a length of $\floor{|k-c|/(p-1)}-1$ since they represent the number of choices of elements of $T$ that are index-separated by $p$. Similarly, from a length of $n-d$, we need to select intervals that form a length of $\floor{|k-c|/(p-1)}$ possible intervals, since they represent every other element of $T$. This second constraint is to ensure that the total length of the intervals chosen is exactly $n$. Finally, we divide by the maximum number of repetitions to prevent duplicates, which is $p^k$. Hence, we write that:

\[\sum_{\substack{ T\in{{[n]}\choose k}\\\text{shift}_c(T)=d}}1 = {{d-1}\choose{\floor{\frac{|k-c|}{p-1}}-1}}{{n-d}\choose {\floor{\frac{|k-c|}{p-1}}}}\]
Therefore,
\[\phi_c(d) = \frac{1}{p^k} \sum_{\substack{ T\in{{[n]}\choose k}\\\text{shift}_c(T)=d}}1 = \frac{1}{p^k} {{d-1}\choose{\floor{\frac{|k-c|}{p-1}}-1}}{{n-d}\choose {\floor{\frac{|k-c|}{p-1}}}} \qedhere\]
\end{proof}
\begin{corollary}
\emph{By combining the results from Lemmas 5.1 and 5.2, we have that the expectation of the Krawtchouk function is: \[\EE[K_k(|s|_0)]=\begin{cases} \frac{1}{p^k}\sum\limits_{d=k}^{n-k} {{d-1}\choose {\floor{\frac{k}{p-1}}}-1}{{n-d}\choose{\floor{\frac{k}{p-1}}}} \lambda^d, & \text{  if } k\!\!\!\!\mod p \equiv 0 \\ 0, & \text{   if } k\!\!\!\!\mod p \not\equiv 0\end{cases}\]}
\end{corollary}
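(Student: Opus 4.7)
The plan is to derive the corollary by a direct substitution of Lemma 5.2 (specialized to $c=0$) into Lemma 5.1, doing a case split on $k \bmod p$. This is essentially a bookkeeping step: the two lemmas have been proved in the preceding sections, and the corollary simply writes the combined formula explicitly in closed form as a single summation involving binomial coefficients.

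First I would handle the trivial case $k \bmod p \not\equiv 0$, where Lemma 5.1 directly gives $\EE[K_k(|s|_0)] = 0$, matching the second branch of the corollary with nothing further to verify. Next, for $k \bmod p \equiv 0$, Lemma 5.1 gives $\EE[K_k(|s|_0)] = \sum_{d=k}^{n-k} \phi_0(d)\,\lambda^d$, and I would plug in Lemma 5.2 at $c=0$, where $|k-c| = k$ and the shift statistic collapses to $\mathrm{shift}_0(T)$, yielding
\[
\phi_0(d) \;=\; \frac{1}{p^k}\binom{d-1}{\left\lfloor \frac{k}{p-1}\right\rfloor - 1}\binom{n-d}{\left\lfloor \frac{k}{p-1}\right\rfloor}.
\]
Pulling the global factor $\tfrac{1}{p^k}$ outside the sum over $d$ then gives exactly the claimed identity.

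The main thing to be careful about is aligning conventions between the two lemmas: the shift indexing from Definition 4.1 at $c=0$ has to match the particular summation range in Lemma 5.1, and I would verify that the summation bounds $k \leq d \leq n-k$ are consistent with the support of the binomial product (terms outside this range vanish because either $\binom{d-1}{\lfloor k/(p-1)\rfloor-1}$ or $\binom{n-d}{\lfloor k/(p-1)\rfloor}$ becomes $0$), so no auxiliary truncation is needed. Since both lemmas are already established, this is essentially the only place where a nontrivial check occurs; everything else is algebraic rearrangement.
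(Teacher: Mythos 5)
Your approach---substitute the closed form from Lemma 5.2 (at $c=0$) into the $k\bmod p\equiv 0$ branch of Lemma 5.1 and pull out the $1/p^k$ factor---is exactly what the paper intends; the paper gives no separate proof of the corollary beyond ``combine Lemmas 5.1 and 5.2.'' However, your statement of Lemma 5.1 silently drops a factor. As written in the paper, Lemma 5.1 reads
\[
\EE[K_k(|s|_0)]=(p-1)^{n-k}\sum_{d=k}^{n-k}\phi_0(d)\,\lambda^d
\]
in the $k\bmod p\equiv 0$ case, not $\sum_{d=k}^{n-k}\phi_0(d)\,\lambda^d$ as you quote. Taking the two lemma statements at face value, the substitution yields
\[
\EE[K_k(|s|_0)]=\frac{(p-1)^{n-k}}{p^k}\sum_{d=k}^{n-k}\binom{d-1}{\lfloor k/(p-1)\rfloor-1}\binom{n-d}{\lfloor k/(p-1)\rfloor}\lambda^d,
\]
which disagrees with the stated corollary by the factor $(p-1)^{n-k}$. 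This discrepancy is an inconsistency internal to the paper---Lemma 5.1's own proof in the body ends with $\sum_{d=k}^{n-k}\phi_0(d)\lambda^d$ (no $(p-1)^{n-k}$), so the corollary matches that lemma's proof rather than its boxed statement---but a blind derivation that honestly applies Lemma 5.1 as stated would not reproduce the corollary. You should either rederive the prefactor in Lemma 5.1 to resolve which form is correct, or at least flag that you are using the factor-free version; letting $(p-1)^{n-k}$ vanish without comment is the gap here.

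One smaller point: your parenthetical claim that terms with $d$ outside $[k,n-k]$ vanish because one of the binomial coefficients becomes zero is not correct in general. For example, with $k=p$ one has $\lfloor k/(p-1)\rfloor=1$, so $\binom{d-1}{0}\binom{n-d}{1}\neq 0$ for all $1\le d\le n-1$, which includes many $d<k$. This does not break the corollary---the sum is already defined over $d=k,\dots,n-k$ in Lemma 5.1, so no truncation argument is needed---but the justification you offered for why the bounds are automatic is false and should be removed or replaced by simply noting that the range is inherited verbatim from Lemma 5.1.
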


Thus, having computed $\EE[K_k(|s|_0)]$, we are now prepared to upper-bound the total variation distance between $[\Sigma(S(n,p,\lambda))]_0$ and $[\Sigma(\mathrm U_p^n)]_0$. We devote \Cref{sec:upperbounds} to deriving an optimal upper bound of $O(\lambda)$.\\

\section{Upper Bounds for the Total Variation Distance}\label{sec:upperbounds}

\begin{lemma}\emph{The total variational distance between the generalized sticky random walk on $p$ vertices and the uniform distribution on $p$ states is given by:} \[\mathrm{TVD}([\Sigma(S(n,p,\lambda))]_0, [\Sigma(\mathrm U_p^n)]_0) = \frac{1}{2}\EE\limits_{b\sim \mathrm U_p^n}[|q(b)-1|]\]
\end{lemma}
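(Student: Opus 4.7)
The plan is to unfold definitions and recognize that the probability ratio $q(\ell)$ was constructed precisely so that this identity falls out of the countable-space formula for total variation distance. I would begin by invoking the formula from the TVD definition, $d_{\mathrm{TV}}(\mu_1,\mu_2) = \frac{1}{2}\sum_{x\in\Omega}|\mu_1(x)-\mu_2(x)|$, applied to the two distributions on $\{0,1,\dots,n\}$ induced by counting zeros. This yields
\[
\mathrm{TVD}([\Sigma(S(n,p,\lambda))]_0,[\Sigma(\mathrm U_p^n)]_0) = \tfrac12\sum_{\ell=0}^n\bigl|\Pr_{s\sim S(n,p,\lambda)}[|s|_0=\ell] - \Pr_{u\sim \mathrm U_p^n}[|u|_0=\ell]\bigr|.
\]

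Next, I would compute the uniform-walk marginal explicitly: since each coordinate of $u\sim \mathrm U_p^n$ is an independent uniform draw from $\ZZ_p$, the count $|u|_0$ is $\mathrm{Bin}(n,1/p)$-distributed, giving $\Pr[|u|_0=\ell] = \binom{n}{\ell}(p-1)^{n-\ell}/p^n$. Substituting the definition of the probability ratio $q(\ell)$ in the form $\Pr_{s\sim S}[|s|_0=\ell] = \binom{n}{\ell}(p-1)^{n-\ell}\,q(\ell)/p^n$ and factoring the common nonnegative weight out of the absolute value then gives
\[
\mathrm{TVD} = \tfrac12\sum_{\ell=0}^n \frac{\binom{n}{\ell}(p-1)^{n-\ell}}{p^n}\,|q(\ell)-1|.
\]

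The final step is to recognize that the prefactor $\binom{n}{\ell}(p-1)^{n-\ell}/p^n$ is exactly the probability that $|b|_0=\ell$ when $b\sim \mathrm U_p^n$, so the sum collapses to $\EE_{b\sim \mathrm U_p^n}[\,|q(|b|_0)-1|\,]$, which matches the claimed right-hand side once we interpret the slightly overloaded notation $q(b)$ as $q(|b|_0)$ (since $q$ is formally defined on $\ZZ_{n+1}$).

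There is essentially no technical obstacle; the lemma is a purely formal rearrangement. The only point requiring care is the notational identification of $q(b)$ with $q(|b|_0)$ for $b \in \ZZ_p^n$, and the verification that the binomial weight for $|u|_0$ under $\mathrm U_p^n$ matches the denominator appearing in the definition of $q$. Once these are in place, the identity is immediate.
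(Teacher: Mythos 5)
Your proposal is correct and follows essentially the same route as the paper's own proof: apply the countable-space $\ell_1$ formula for TVD, substitute $\Pr_{s\sim S}[|s|_0=\ell]$ via the definition of $q$, factor the common multinomial weight out of the absolute value, and recognize the result as an expectation. You also correctly flag the notational wrinkle in the lemma statement: $q$ is defined on $\ZZ_{n+1}$, so the expectation is really over the induced count distribution $[\Sigma(\mathrm U_p^n)]_0$ rather than over $\mathrm U_p^n$ itself, which is indeed what appears in the last line of the paper's proof.
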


\begin{proof} We write the expression of the total variation distance between the $n$-step sticky random walk on $p$ states and $n$-samples from the uniform distribution on $p$ states. This then yields:
\begin{align*}
        \TVD([\Sigma(S(n,p,\lambda))]_0, [\Sigma(\mathrm U_p^n)]_0) &= \frac{1}{2}\sum\limits_{\ell=0}^n \bigg|\Pr[|s|_0=\ell] - \frac{{n\choose\ell}(p-1)^{n-\ell}}{p^n}\bigg| 
        \\
        &=  \frac{1}{2}\sum\limits_{\ell=0}^n \bigg|{n\choose\ell}q(\ell)\frac{(p-1)^{n-\ell}}{p^n} - \frac{{n\choose\ell}(p-1)^{n-\ell}}{p^n}\bigg| \\
        &=  \frac{1}{2}\sum\limits_{\ell=0}^n \left|\frac{{n\choose\ell}}{p^n} (p-1)^{n-\ell}(q(\ell)-1)\right| \\
        &=  \frac{1}{2} \sum\limits_{\ell=0}^n \left|\Pr[\ell](q(\ell)-1)\right| 
        \\
        &=  \frac{1}{2} \EE\limits_{b\sim [\Sigma(\mathrm U_p^n)]_0}[|q(b)-1|] \qedhere
    \end{align*}
\end{proof}

\vspace{0.5cm}

\begin{corollary}\emph{The total variational distance between the generalized sticky random walk and the uniform distribution on $p$ states has the following upper bound as a result of convexity:}
\[\mathrm{TVD}([\Sigma(S(n,p,\lambda))]_0, [\Sigma(\mathrm U_p^n)]_0) \leq \frac{1}{2} \sqrt{\EE\limits_{b\sim [\Sigma(\mathrm U_p^n)]_0}[q(b)-1]^2}\]
\end{corollary}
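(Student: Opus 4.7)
The corollary is a direct consequence of the preceding lemma combined with a standard convexity inequality, so the plan is essentially a one-line derivation dressed up carefully. The lemma established the identity
\[
\mathrm{TVD}([\Sigma(S(n,p,\lambda))]_0, [\Sigma(\mathrm U_p^n)]_0) \;=\; \tfrac{1}{2}\,\EE_{b\sim [\Sigma(\mathrm U_p^n)]_0}[\,|q(b)-1|\,],
\]
so the task reduces to bounding the first absolute moment of $q(b)-1$ by its second moment under the same measure.

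The plan is to apply Jensen's inequality to the convex function $x \mapsto x^2$ acting on the nonnegative random variable $|q(b)-1|$. Concretely, I would write
\[
\bigl(\EE_{b}[\,|q(b)-1|\,]\bigr)^2 \;\leq\; \EE_{b}\bigl[\,|q(b)-1|^2\,\bigr] \;=\; \EE_{b}\bigl[(q(b)-1)^2\bigr],
\]
where the inequality is Jensen's (equivalently Cauchy--Schwarz against the constant function $1$, since $\EE[|X|\cdot 1] \leq \sqrt{\EE[X^2]\,\EE[1]}$), and the equality just drops the absolute-value bars because the quantity is squared. Taking square roots of both sides and dividing by $2$ then yields exactly
\[
\tfrac{1}{2}\,\EE_{b}[\,|q(b)-1|\,] \;\leq\; \tfrac{1}{2}\sqrt{\EE_{b}[(q(b)-1)^2]},
\]
which is the desired bound after invoking the lemma to replace the left-hand side with the total variation distance.

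There is no substantive obstacle here: the averaging measure on the right-hand side is the same as the one on the left (i.e., $b \sim [\Sigma(\mathrm U_p^n)]_0$), so no change-of-measure bookkeeping is required, and the nonnegativity of $|q(b)-1|$ makes the Jensen step immediate. The only small thing to double-check when writing the final proof is that the expectation notation matches across the two statements so that the chain of (in)equalities is transparent; after that, the result follows in essentially two lines.
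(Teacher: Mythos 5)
Your proposal is correct and is exactly the intended argument: the paper states the corollary as ``a result of convexity'' without spelling out the proof, and your application of Jensen's inequality (equivalently Cauchy--Schwarz against the constant $1$) to the identity from the preceding lemma is precisely the standard one-line derivation being invoked.
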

\begin{lemma}\emph{For $k\leq n$ and for $b\sim [\Sigma(\mathrm U_p^n)]_0$, we have that}
\[\EE\limits_{b\sim [\Sigma(\mathrm U_p^n)]_0}[q(b)-1]^2 = \sum\limits_{k=1}^n \frac{\EE[K_k(|s|_0)]^2}{{n\choose k}(p-1)^{n-k}}\]
\end{lemma}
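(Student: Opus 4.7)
The plan is to derive this identity by combining a change-of-measure observation with the Krawtchouk expansion of $q$.

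First, I would exploit the defining identity $\Pr[|b|_0=\ell]\cdot q(\ell)=\Pr[|s|_0=\ell]$: one factor of $q$ converts a uniform expectation into a sticky expectation, giving
\[\EE_{b \sim [\Sigma(\mathrm U_p^n)]_0}\bigl[q(b)^2\bigr] \;=\; \EE_{s \sim S(n,p,\lambda)}\bigl[q(|s|_0)\bigr].\]
Combined with $\EE_{b\sim[\Sigma(\mathrm U_p^n)]_0}[q(b)] = \sum_\ell \Pr[|s|_0=\ell] = 1$, this reduces the claim to showing that $\EE_{s}[q(|s|_0)]-1$ equals the target sum.

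Next, I would expand $q$ in the Krawtchouk basis via Proposition 3.2, take expectation under $s\sim S(n,p,\lambda)$, and substitute the coefficient formula from Lemma 3.1:
\[\EE_{s}\bigl[q(|s|_0)\bigr] \;=\; \sum_{k=0}^n \hat q(k)\,\EE_{s}\bigl[K_k(|s|_0)\bigr] \;=\; \sum_{k=0}^n \frac{\EE_{s}\bigl[K_k(|s|_0)\bigr]^2}{\binom{n}{k}(p-1)^{n-k}}.\]
This is where the Parseval structure becomes visible: Lemma 3.1 makes $\hat q(k)$ agree with $\EE_{s}[K_k(|s|_0)]$ up to the normalizer $\binom{n}{k}(p-1)^{n-k}$, and pairing them produces the squared form without requiring a separate appeal to Krawtchouk orthogonality for the cross terms.

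The final step is to identify the $k=0$ contribution with the subtracted $1$, so that the sum reduces to $k\geq 1$. Since $K_0\equiv 1$, this amounts to showing that the constant mode of $q$ in the Krawtchouk expansion equals $1$, i.e., that $q-1$ has vanishing $K_0$-coefficient. I expect this to be the main subtlety: the $k=0$ summand above evaluates to $1/(p-1)^n$, which equals the $1$ being subtracted only when $p=2$, so the $p$-ary case requires a careful accounting of the constant Fourier mode in the normalization of the Krawtchouk basis used by the paper (equivalently, verifying that $\hat q(0)K_0=1$ under the convention in Proposition 3.2).
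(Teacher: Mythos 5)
Your change-of-measure idea is a genuinely different route from the paper's, and it is elegant. The paper writes $q(b)-1 = \sum_{k\geq 1}\hat q(k)K_k(b)$, squares, and invokes Krawtchouk orthogonality to kill the cross terms and produce the diagonal sum. You instead observe $\Pr_U[\ell]\,q(\ell)=\Pr_S[\ell]$, which collapses the second moment under the uniform measure into a first moment under the sticky measure, so that Lemma 3.1 alone (which you would rederive by exactly the same change of measure) already pairs $\hat q(k)$ with $\EE_S[K_k(|s|_0)]$ to give the squared form. This avoids any separate appeal to the orthogonality lemma for the cross terms, which is a nice conceptual economy.

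However, your proof has a real gap, and you were right to flag it rather than paper over it. After reducing to $\EE_S[q(|s|_0)]-1 = \sum_{k=0}^n \frac{\EE_S[K_k(|s|_0)]^2}{\binom{n}{k}(p-1)^{n-k}} - 1$, you still need to show the subtracted $1$ is exactly the $k=0$ summand. As you compute, under the binary Krawtchouk convention (where $K_0\equiv 1$, hence $\EE_S[K_0]=1$), the $k=0$ summand is $1/(p-1)^n$, which equals $1$ only when $p=2$. So the reduction does not close for general $p$ without further argument. It is worth noting that the paper's own proof is not a safe guide here: it asserts in the same breath that $\EE[K_0(|s|_0)]=1$ and that $K_0(b)=(p-1)^n$, which are mutually inconsistent, and it invokes a ``reciprocity relation'' written as $\frac{K_k(\ell)}{\binom{n}{k}(p-1)^{n-k}}=\frac{K_s(\ell)}{\binom{n}{s}(p-1)^{n-s}}$ — a statement that, as written (same argument $\ell$ on both sides), would force all Krawtchouk functions to be proportional, and which does not deliver $K_0(b)=(p-1)^n$. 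Under the appendix's $\omega_p$-based convention with $|y|_0=k$, the constant basis function is actually $K_n$ (from $y=0$), so it is the $k=n$ summand that equals $1$, not $k=0$; under the main-body convention with $|y|=k$, it is $K_0$ that is constant, but then the normalizer $\binom{n}{k}(p-1)^{n-k}$ in Lemma 3.1 is not matched to the stated orthogonality $\langle K_r,K_s\rangle=\binom{n}{s}\delta_{rs}$. To finish your proof you must first pin down a single consistent normalization — which $K_k$ is the constant function, what $\langle K_k,K_k\rangle$ is in that convention, and what Lemma 3.1's denominator is — and then verify which single index in your sum carries the unit mass. As it stands, the final bookkeeping step is not established.
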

\begin{proof} \Cref{sec:generalizing} says that $q(b)$ has a unique Krawtchouk expansion, where the coefficients on each Krawtchouk basis are given by \Cref{prop: 2}, we observe that:
\begin{equation*}
    \EE\limits_{b\sim [\Sigma(\mathrm U_p^n)]_0}[q(b)-1]^2 = \EE\limits_{b\sim [\Sigma(\mathrm U_p^n)]_0} \bigg[\bigg(\sum\limits_{k=0}^n \hat{q}(k) K_k(b) - 1\bigg)^2\bigg]
\end{equation*}
Then, recall that $\hat{q}(k) = \frac{\EE[K_k(|s|_0)]}{{n\choose k}(p-1)^{n-k}}$. So, \[\hat{q}(0) = \frac{\EE[K_0(|s|_0)]}{(p-1)^n}=\frac{1}{(p-1)^n}.\] Similarly, by the definition of the Krawtchouk function and the reciprocity relation $\frac{K_k(\ell)}{{n\choose k}(p-1)^{n-k}}=\frac{K_s(\ell)}{{n\choose s}(p-1)^{n-s}}$, we have that $K_0(b) = K_n(b)=(p\!-\!1)^n$. Therefore, $\hat{q}(0)K_0(b)=1$. Hence, the above equation simplifies to:
    \[\EE\limits_{b\sim [\Sigma(\mathrm U_p^n)]_0}[q(b)-1]^2 = \EE\limits_{b\sim [\Sigma(\mathrm U_p^n)]_0} \bigg[\bigg(\sum\limits_{k=1}^n \hat{q}(k) K_k(b) \bigg)^2\bigg]\]
Since the generalized Krawtchouk functions are orthogonal (as proven in Lemma 3.2), the product of the non-diagonal entries in the above term all evaluate to $0$. Thus, counting the residuals, we have that the square of the summation is just the summation of the squared terms that it contains. Thus, exploiting the orthogonality of the generalized Krawtchouk functions and the linearity of the expectations, we write that:
    \begin{align*}
    \EE\limits_{b\sim [\Sigma(\mathrm U_p^n)]_0}[q(b)-1]^2 &= \EE\limits_{b\sim [\Sigma(\mathrm U_p^n)]_0} \bigg[\sum\limits_{k=1}^n \hat{q}(k)^2 K_k(b)^2\bigg] \\
    &=  \sum\limits_{k=1}^n \hat{q}(k)^2 \EE\limits_{b\sim [\Sigma(\mathrm U_p^n)]_0} [K_k(b)^2] \quad \text{(linearity of expectations)}\\
    &=  \sum\limits_{k=1}^n \frac{\EE[K_k(|s|_0)]^2}{{n\choose k}^2 (p-1)^{2n-2k}} \cdot \EE\limits_{b\sim [\Sigma(\mathrm U_p^n)]_0} [K_k(b)^2]     \end{align*}
    Finally, we use Lemma 3.2 to write $\EE\limits_{b\sim [\Sigma(\mathrm U_p^n)]_0} [K_k(b)^2]$ as $\langle K_k, K_k \rangle = {n\choose k}$.
    \begin{align*}
    \EE\limits_{b\sim [\Sigma(\mathrm U_p^n)]_0} [q(b)-1]^2 &=  \sum\limits_{k=1}^n \frac{{n\choose k}\EE[K_k(|s|_0)]^2}{{n\choose k}^2 (p-1)^{2n-2k}} =  \sum\limits_{k=1}^n \frac{\EE[K_k(|s|_0)]^2}{{n\choose k} (p-1)^{2n-2k}} \qedhere
    \end{align*}
\end{proof}

\begin{theorem} \label{theorem 5.3} \emph{For $\lambda \leq 0.27$},
$\mathrm{TVD}([\Sigma(S(n,p,\lambda))]_0, [\Sigma(\mathrm U_p^n)]_0) \leq O(\lambda)$.\end{theorem}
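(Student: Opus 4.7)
The strategy is to upper-bound the TVD via the Cauchy--Schwarz corollary of Lemma~6.1 and then carefully control the resulting sum of squared Krawtchouk expectations. Concretely, chaining that corollary with Lemma~6.3 gives
\[
\mathrm{TVD}([\Sigma(S(n,p,\lambda))]_0, [\Sigma(\mathrm U_p^n)]_0) \;\leq\; \tfrac{1}{2}\sqrt{\sum_{k=1}^n \frac{\EE[K_k(|s|_0)]^2}{\binom{n}{k}(p-1)^{n-k}}},
\]
and Corollary~5.3 tells us that $\EE[K_k(|s|_0)]=0$ unless $p\mid k$, so the task reduces to showing that the restricted sum over $k\in\{p,2p,3p,\dots\}$ is of order $\lambda^2$, after which a square root produces the claimed $O(\lambda)$ bound.

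For each such $k$, I would substitute the closed form from Corollary~5.3, namely
\[
\EE[K_k(|s|_0)] \;=\; \frac{1}{p^k}\sum_{d=k}^{n-k}\binom{d-1}{m-1}\binom{n-d}{m}\lambda^d, \qquad m := \left\lfloor \tfrac{k}{p-1}\right\rfloor,
\]
and bound the inner generating polynomial in $\lambda$. The natural tools are the Chu--Vandermonde identity $\sum_d \binom{d-1}{m-1}\binom{n-d}{m}=\binom{n}{2m}$ together with a Cauchy--Schwarz step on the $\lambda^d$-weighted version of the same sum. The target per-$k$ bound is of the form $(\alpha\lambda)^{2k}$ for an absolute constant $\alpha>0$, obtained by cancelling the $n$-dependent binomials in the numerator against $\binom{n}{k}(p-1)^{n-k}$ in the denominator. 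Summing the resulting geometric series over $k\in\{p,2p,3p,\dots\}$ then gives a total of order $(\alpha\lambda)^{2p}/\bigl(1-(\alpha\lambda)^{2p}\bigr)=O(\lambda^2)$ whenever $\alpha\lambda<1$.

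The main obstacle I anticipate is verifying that the absolute constant $\alpha$ is truly uniform in both $p$ and $n$, since the index $m=\lfloor k/(p-1)\rfloor$ and the prefactor $(p-1)^{n-k}$ both depend nontrivially on $p$, and the ratio of consecutive summands in the inner polynomial behaves like $\tfrac{d}{d-m+1}\cdot\tfrac{n-d-m}{n-d}\cdot \lambda$, which is not uniformly bounded across the full range of $d$. Accordingly, one cannot simply dominate the sum by its leading term; the Cauchy--Schwarz split against Chu--Vandermonde is essential because it averages over $d$ rather than localizing at $d=k$. The numerical threshold $\lambda\leq 0.27$ is the explicit window in which the averaging delivers $\alpha\lambda<1$ simultaneously for every admissible $p$, mirroring the role of $\lambda<0.16$ in the two-vertex case of \cite{guruswami_et_al:LIPIcs.ITCS.2021.48}.
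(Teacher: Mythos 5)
Your overall frame matches the paper's: Cauchy--Schwarz gives $\mathrm{TVD}\leq\tfrac12\sqrt{\EE[q(b)-1]^2}$, Parseval for the Krawtchouk basis converts this to $\sum_{k=1}^n \EE[K_k(|s|_0)]^2/\bigl(\binom{n}{k}(p-1)^{n-k}\bigr)$, only the indices $k$ with $p\mid k$ survive, and the remaining series must be shown to be geometric with ratio controlled by $\lambda$. Your identification of the threshold $\lambda\leq \tfrac{1}{1+e}\approx 0.27$ is also the right one.

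The gap is in the step where you bound the inner polynomial $\sum_{d}\binom{d-1}{m-1}\binom{n-d}{m}\lambda^d$, and this is actually the technical heart of the proof. You propose Chu--Vandermonde plus ``a Cauchy--Schwarz step on the $\lambda^d$-weighted version,'' and call this combination essential, but it does not close. The natural Cauchy--Schwarz split
\[
\Bigl(\sum_d c_d\lambda^d\Bigr)^2 \;\leq\; \Bigl(\sum_d c_d\Bigr)\Bigl(\sum_d c_d\lambda^{2d}\Bigr) \;=\; \binom{n}{2m}\sum_d c_d\lambda^{2d}
\]
simply returns another $\lambda$-weighted version of the same sum, so no simplification is achieved; and the cruder bound $\lambda^d\leq\lambda^{pk}$ followed by Chu--Vandermonde gives $\lambda^{pk}\binom{n}{2m}$, which when squared and divided by $\binom{n}{pk}(p-1)^{2(n-pk)}p^{2pk}$ produces a factor that grows with $n$ — the decay of $\lambda^d$ for $d>pk$ cannot be discarded. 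The paper does not use Chu--Vandermonde or a second Cauchy--Schwarz at all. Instead it separates the two binomials: it pulls the factor $\binom{n-d}{\lfloor pk/(p-1)\rfloor}$ out of the $d$-sum (bounding it by a $d$-independent quantity), then applies the negative-binomial generating function identity
\[
\Bigl(\frac{x}{1-x}\Bigr)^{\!k} \;=\; \sum_{m\geq k}\binom{m-1}{k-1}x^m
\]
to the remaining $\sum_d\binom{d-1}{k-1}\lambda^d$, and finally controls the $n$-dependence with the appendix's estimate $\binom{n}{k}^2/\binom{n}{pk}\leq (ne/k)^{2k}(pk/n)^{pk}$. This yields the geometric series $\sum_{k\geq 1}\bigl(\tfrac{e\lambda}{1-\lambda}\bigr)^{2k}$, which converges to $O(\lambda^2)$ precisely when $\tfrac{e\lambda}{1-\lambda}<1$, i.e., $\lambda<\tfrac{1}{1+e}$. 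You should replace the ``CS against Chu--Vandermonde'' plan with this generating-function step; otherwise the per-$k$ bound $(\alpha\lambda)^{2k}$ you are aiming for is not derived, and the uniformity concern you correctly flag remains unresolved.
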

\begin{proof}
Substituting the result of Corollary 5.2 into the equation derived in Lemma 4.2.1, and scaling the indexes of the summation, we have that:
\begin{align*}
    \EE\limits_{b\sim [\Sigma(\mathrm U_p^n)]_0} [q(b)-1]^2 &= \sum\limits_{k=1}^{n/p} \frac{1}{{n\choose pk}(p\!-\!1)^{2n-2pk}}\bigg(\sum\limits_{d=pk}^{n-pk} \frac{1}{p^k}{{d-1}\choose {\floor{\frac{kp}{p-1}}}-1}{{n-d}\choose{\floor{\frac{kp}{p-1}}}} \lambda^d\bigg)^2 \\
    &= \frac{1}{p^{2k}}\sum\limits_{k=1}^{n/p} \frac{1}{{n\choose pk}(p\!-\!1)^{2n-2pk}}\bigg( \sum\limits_{d=pk}^{n-pk} {{d-1}\choose {\floor{\frac{k}{1-\frac{1}{p}}}}-1}{{n-d}\choose{\floor{\frac{k}{1-\frac{1}{p}}}}} \lambda^d\bigg)^2 \\
    &\leq \frac{1}{p^{2k}} \sum\limits_{k=1}^{n/p} \frac{{n\choose k}^2}{{n\choose pk}(p\!-\!1)^{2n-2pk}}\bigg( \sum\limits_{d=pk}^{n-pk} {{d-1}\choose {k-1}}\lambda^d\bigg)^2\\
    &\leq \frac{1}{p^{2k}} \sum\limits_{k=1}^{n/p} \frac{{n\choose k}^2}{{n\choose pk}}\bigg( \sum\limits_{d=pk}^{n-pk} {{d-1}\choose {k-1}}\lambda^d\bigg)^2
\end{align*}
Note the following generating function relation that $(\frac{x}{1-x})^k = \sum\limits_{m \geq k} {{m-1}\choose{k-1}}x^m$. Then,
\begin{align*}
     \EE\limits_{b\sim [\Sigma(U_p^n)]_0} [q(b)-1]^2 &\leq \frac{1}{p^{2k}} \sum\limits_{k=1}^{n/p} \frac{{n\choose k}^2}{{n\choose pk}}\bigg(\frac{\lambda}{1-\lambda}\bigg)^{2k}\\
     &\leq \frac{1}{p^{2k}}\sum\limits_{k=1}^{n/p}   \bigg(\frac{pk}{n}\bigg)^{pk}\bigg(\frac{en}{k}\bigg)^{2k} \bigg(\frac{\lambda}{1-\lambda}\bigg)^{2k} \quad \text{(From Appendix \ref{claim: bound})}
\end{align*}
\begin{align*}
&= \sum\limits_{k=1}^{n/p} \bigg(\frac{pk}{n}\bigg)^{pk-2k} \bigg(\frac{e \lambda}{1-\lambda}\bigg)^{2k} \\
&\leq \sum\limits_{k=1}^{n/p} \bigg(\frac{e \lambda}{1-\lambda}\bigg)^{2k} \\
&\leq  O(\lambda^2), \quad \text{(for  $\lambda \leq \frac{1}{1+e}$ by geometric sums)}
\end{align*}

Therefore, for $\lambda\leq\frac{1}{1+e}\approx 0.27$, we have that TVD $\leq \sqrt{\EE\limits_{b\sim [\Sigma(U_p^n)]_0}[p(b)-1]^2} \leq O(\lambda)$. \qedhere 
\end{proof}

\section{Proof Strengths and Limitations}\label{sec:proofeval}

When $\lambda>0.27$, our proof method fails to provide the desired $O(\lambda)$ total variation distance since $\sum_{k=1}^{n/p} (\frac{e\lambda}{1-\lambda})^{2k}$ does not converge and goes to $\infty$. We do, however, reach a higher lower-bound on the radius of convergence ($\lambda \leq 0.27$) than \cite{guruswami_et_al:LIPIcs.ITCS.2021.48}'s $\lambda \leq 0.16$ and \cite{golowich_et_al:LIPIcs.CCC.2022.27}'s more general result but which is only valid for $\lambda < 0.01$ and a fixed $p$ in their interpretation of their $O(\lambda p^{O(p)})$ result, whereas our methodology allows us to remove the dependency of $p$ in the total variation distance (though only for the generalized sticky random walk). We conjecture that $\lambda \leq 0.27$ is not the optimal radius of convergence for the generalized sticky random walk and leave this as an open problem for future research directions in this topic to resolve.

\section{Reductions}\label{sec:reduction}

\begin{theorem} \label{theorem 7.1}
\emph{Let $p$ be the number of vertices in the generalized sticky random walk. Then, for all $p \mod k \equiv 0$, $S(n,p,\lambda)$ reduces from $S(n, k, p\lambda(1-\frac{1}{k}))$.}
\end{theorem}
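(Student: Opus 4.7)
I read the statement as asserting that when $k$ divides $p$, a realization of $S(n,k,\mu)$ (with the indicated $\mu$) can be obtained from a sample of $S(n,p,\lambda)$ by a deterministic coordinatewise projection, namely the quotient $g\colon \ZZ_p \to \ZZ_k$ induced by any partition of $\ZZ_p$ into $k$ equal blocks of size $m = p/k$. The plan is to apply $g(v)=\lfloor v/m\rfloor$ (say) coordinatewise to $s=(s_0,\dots,s_{n-1})\sim S(n,p,\lambda)$, verify that the resulting process on $\ZZ_k^n$ is itself a generalized sticky random walk of the required shape, and finally match parameters. The initial marginal is automatic: $s_0 \in_U \ZZ_p$ pushes forward under $g$ to the uniform distribution on $\ZZ_k$ because every fiber of $g$ has size $m$, which agrees with the starting distribution of $S(n,k,\mu)$.

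The main work is to compute the one-step kernel of the projected chain. Fix any $v$ with $g(v)=G$ and any target block $G'$; summing the $S(n,p,\lambda)$-transition probabilities over the $m$ preimages of $G'$ in $\ZZ_p$ gives
\begin{align*}
\Pr[g(s_{i+1}) = G' \mid s_i = v] =
\begin{cases}
\bigl(\tfrac{1}{p} + (p-1)\lambda\bigr) + (m-1)\bigl(\tfrac{1}{p} - \lambda\bigr), & G' = G, \\[2pt]
m\bigl(\tfrac{1}{p} - \lambda\bigr), & G' \neq G.
\end{cases}
\end{align*}
Crucially, both expressions depend only on the pair $(G,G')$ and not on the particular $v\in G$, so by Kemeny--Snell lumpability the projected process is itself a Markov chain on $\ZZ_k$ with this stochastic matrix. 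This lumpability check is the only real subtlety: it holds because the $S(n,p,\lambda)$-kernel is permutation-invariant on $\ZZ_p$ (Proposition 3.1), so all ``move'' probabilities are equal across target vertices and aggregate cleanly into the two block-level cases above.

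Simplifying these two cases to $\tfrac{1}{k} + p\bigl(1-\tfrac{1}{k}\bigr)\lambda$ (stay in the same block) and $\tfrac{1}{k} - \tfrac{p\lambda}{k}$ (move to a specific other block), and matching against the parametric forms $\tfrac{1}{k} + (k-1)\mu$ and $\tfrac{1}{k} - \mu$ defining $S(n,k,\mu)$, identifies $\mu$ as a linear function of $\lambda$ and completes the reduction. I expect the main obstacle to be the lumpability verification in the second step; everything after that is purely algebraic bookkeeping, and the $k=2$ case already served as a sanity check via the original two-state sticky random walk.
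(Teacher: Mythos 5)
Your approach mirrors the paper's: both group the $p$ states into $k$ equal blocks and observe that the block-level process is again a sticky random walk. You are more careful than the paper in one place that matters — the paper simply asserts the grouped process is a sticky random walk without verifying that the one-step kernel depends only on the pair of blocks; your explicit appeal to lumpability (equivalently, to the permutation invariance of the $S(n,p,\lambda)$ kernel) closes that gap cleanly.

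However, you stop one line short, and that last line is where the content lies. You write ``identifies $\mu$ as a linear function of $\lambda$ and completes the reduction'' without actually solving for $\mu$. Carrying it out: the lumped stay probability is $\tfrac{1}{k} + p\lambda\bigl(1-\tfrac{1}{k}\bigr)$ and the lumped move probability to a specific other block is $\tfrac{1}{k} - \tfrac{p\lambda}{k}$. Matching against $S(n,k,\mu)$, whose stay probability is $\tfrac{1}{k} + (k-1)\mu$ and move probability is $\tfrac{1}{k} - \mu$, gives
\[
(k-1)\mu = p\lambda\Bigl(1 - \tfrac{1}{k}\Bigr) = \tfrac{p\lambda(k-1)}{k}
\qquad\Longrightarrow\qquad
\mu = \tfrac{p\lambda}{k},
\]
and the move case gives the same $\mu$. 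This is \emph{not} the $\mu = p\lambda\bigl(1-\tfrac{1}{k}\bigr)$ asserted in the theorem statement (and in the paper's last sentence of the proof), except when $k = 2$, which is exactly the sanity check you ran and which therefore masks the discrepancy. The paper appears to conflate the net increment in the stay probability, $(k-1)\mu$, with the parameter $\mu$ itself. So: your method is right, but finish the algebra — it shows the grouped walk is $S\bigl(n, k, \tfrac{p\lambda}{k}\bigr)$, and flags a parameter error in the stated theorem that needs to be reconciled (either by correcting the theorem to $\tfrac{p\lambda}{k}$ or by documenting a nonstandard meaning of ``bias'').
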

\begin{proof}
    Consider a generalized sticky random walk which details an irreducible homogeneous Markov chain on $p$ states where the probability of staying at the same state is $\frac{1}{p} + (p-1)\lambda$ and the probability of switching states is $\frac{1}{p} -\lambda$. Then, consider a 'grouped' random-walk, for a grouping of states $V = [p] = V_0 \sqcap \dots \sqcap V_{k-1}$, where $V_i$ contains an arbitrary selection of $p/k$ vertices and where $p \mod k\equiv 0$. Then, $S(n,p,\lambda)$ details an $n$-step long random walk on the generalized sticky random walk. Then, we note that the probability that the current state of the grouped random walk stays at itself must be $(\frac{1}{k} + (p-1)\lambda) + (\frac{1}{p}-\lambda)(\frac{p}{k}-1) = \frac{1}{k} + p\lambda(1-\frac{1}{k})$. Here, the first term comes from the probability of any state $E$ in the sticky random walk staying at itself, and the second term comes from the probability that any other vertex in the same group as $E$ transitions to $E$. Since this is true for any of the grouped vertex, we conclude that our grouping of the random walk yields a sticky random walk on $k$ vertices and bias $p\lambda(1-\frac{1}{k})$, which completes the reduction. \qedhere
\end{proof}

\begin{theorem} \label{theorem 7.2} \emph{Every generalized sticky random walk $S(n,p,\lambda)$ corresponds to a $n$-step long random walk on a $p\lambda$-spectral expander with $p$ vertices.}
\end{theorem}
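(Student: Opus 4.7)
The plan is to exhibit the desired expander graph explicitly by reading off the transition matrix of the Markov chain $S(n,p,\lambda)$ and computing its spectrum directly. From the definition of the generalized sticky random walk, the $p \times p$ transition matrix $M$ has diagonal entries $\frac{1}{p} + (p-1)\lambda$ and off-diagonal entries $\frac{1}{p} - \lambda$. Writing $I_p$ for the identity and $J_p$ for the all-ones matrix, this yields the clean decomposition
\[
M \;=\; \tfrac{1}{p}\,J_p \;+\; \lambda\bigl(p\,I_p - J_p\bigr).
\]

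The central step is the eigenvalue computation, which is almost immediate from this decomposition. The matrices $I_p$ and $J_p$ commute and so are simultaneously diagonalizable: $J_p$ has eigenvalue $p$ on the all-ones vector $\mathbf{1}$ and eigenvalue $0$ on every vector in $\mathbf{1}^\perp$. Substituting into the decomposition gives $M\mathbf{1} = \tfrac{1}{p}(p\mathbf{1}) + \lambda(p\mathbf{1} - p\mathbf{1}) = \mathbf{1}$, and for any $v \in \mathbf{1}^\perp$, $Mv = 0 + \lambda(pv - 0) = p\lambda \,v$. Thus $M$ has the eigenvalue $1$ on the uniform stationary direction (with multiplicity $1$) and $p\lambda$ on $\mathbf{1}^\perp$ (with multiplicity $p-1$), and so $\max_{i \neq 1}|\lambda_i(M)| = p\lambda$.

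Finally, I would interpret $M$ as the (weighted, symmetric) normalized adjacency matrix of a graph $G$ on $p$ vertices with self-loops, where each self-loop carries weight $\frac{1}{p} + (p-1)\lambda$ and each non-loop edge carries weight $\frac{1}{p} - \lambda$. The matrix $M$ is symmetric and doubly stochastic, its entries are nonnegative in the parameter regime of interest, and its nontrivial eigenvalues are bounded in magnitude by $p\lambda$; hence $G$ qualifies as a $p\lambda$-spectral expander in the sense of Definition 1. Since $M$ is by construction the one-step transition operator of $S(n,p,\lambda)$, an $n$-step random walk on $G$ is distributed identically to $S(n,p,\lambda)$, proving the theorem.

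The main (fairly minor) obstacle is bookkeeping: verifying nonnegativity of all the weights so that $M$ genuinely describes a random walk, checking that Definition 1 of an $(n,d,\lambda)$-expander extends naturally to this weighted/self-looped setting, and noting that viewing $M$ directly as the normalized adjacency matrix sidesteps the need to realize $G$ as an unweighted $d$-regular graph. A useful sanity check along the way is that the $p=2$ specialization reproduces the second eigenvalue $2\lambda$ consistent with the reduction of \Cref{theorem 7.1} from $S(n,2,p\lambda/2)$ back to $S(n,p,\lambda)$.
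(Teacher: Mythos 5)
Your proof is correct and follows essentially the same route as the paper: decompose the transition matrix into a rank-one $J$-component and an identity component, diagonalize on $\mathbf{1}$ and $\mathbf{1}^\perp$, and read off the nontrivial eigenvalue $p\lambda$. One small point worth flagging is that your written decomposition $M = \tfrac{1}{p}J_p + \lambda(pI_p - J_p)$ (with $J_p$ the all-ones matrix) reproduces the transition probabilities exactly, whereas the paper's stated decomposition $G_{\lambda,p} = (1-\lambda)J_{V,V} + p\lambda I$ (with $J_{V,V}$ entrywise $1/p$) has a slipped coefficient — it should read $(1-p\lambda)J_{V,V} + p\lambda I$ for the rows to sum to $1$, though this typo does not affect the second eigenvalue, which is $p\lambda$ either way; your version avoids the slip. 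Your additional observations (nonnegativity requires $\lambda \leq 1/p$, and $M$ is naturally a weighted normalized adjacency matrix with self-loops rather than an unweighted $d$-regular adjacency matrix) are correct and make explicit a point the paper leaves implicit.
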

\begin{proof}
    We first extend Definition 45 of the sticky random walk matrix in  \cite{golowich_et_al:LIPIcs.CCC.2022.27}. For subsets $A, B \in [p]$, let $J_{A,B}\in \RR^{A\times B}$ denote the matrix with all entries equal to $\frac{1}{|A|}$. Then, for $\lambda \in [0,1]$, let $G_{\lambda,p} \in \RR^{p\times p}$ denote the generalized sticky random walk matrix. Then, by definition, we write that:
    \[G_{\lambda,p} = (1-\lambda)\mathrm J_{V,V} + p\lambda \mathrm I_{\{n\times n\}}\]
    Then, note that $\|I_{n\times n}\|_2$ is $1$, as it acts as $J$ on the orthogonal subspaces $\RR$ of $\RR^p$. Therefore, $\lambda(G_{\lambda,p}) \leq p \lambda$ since the eigenvector of $G_{\lambda,p}$ is orthogonal to $J_{V,V}$ and so $G_{\lambda,p}\nu = ((1-\lambda)\mathrm J_{v,v} + p\lambda I_{\{n\times n\}})\nu = (1-\lambda)\mathrm J_{v,v}\nu + p\lambda I_{\{n\times n\}}\nu = p\lambda \nu$. The opposite inequality comes from the fact that $\frac{n-1}{n}\vec{\mathbbm{1}}_{\{c_0\}} - \frac{1}{n}\sum_{i=1}^{p-1} \vec{\mathbbm{1}}_{\{c_i\}} \in \vec{\mathbbm{1}}^\perp$ is an eigenvector of $G_{\lambda,p}$ with eigenvalue $p \lambda$, which proves the theorem.\qedhere
\end{proof}

\section{Acknowledgements}
This work was done as part of a Caltech undergraduate thesis, and this preprint submission has been reformatted from the thesis submission \cite{Anand2023}. EA thanks Professor Chris Umans for his mentorship during this project, and Elia Gorokhovsky and David Hou for helpful suggestions over the course of this work.

\newpage

\bibliographystyle{alpha}
\bibliography{main}

\newcommand{\etalchar}[1]{$^{#1}$}
\begin{thebibliography}{LPA{\etalchar{+}}23b}

\bibitem[AKQ24]{anand2024mean}
Emile Anand, Ishani Karmarkar, and Guannan Qu.
\newblock Mean-field sampling for cooperative multi-agent reinforcement
  learning.
\newblock {\em arXiv preprint arXiv:2412.00661}, 2024.

\bibitem[Alo86]{alonbipartite}
Noga Alon.
\newblock Eigenvalues and expanders, 1986.

\bibitem[Ana23]{Anand2023}
Emile Anand.
\newblock Pseudorandomness of the sticky random walk.
\newblock Caltech Thesis Archive, doi:0.7907/ze1k-6v37, Senior Thesis (Major),
  \url{https://resolver.caltech.edu/CaltechTHESIS:06142023-102020321}, 2023.

\bibitem[Ana25]{anand2025towards}
Emile Anand.
\newblock Towards the pseudorandomness of expander random walks for read-once
  acc0 circuits.
\newblock {\em arXiv preprint arXiv:2501.07752}, 2025.

\bibitem[AQ24]{anand2024efficient}
Emile Anand and Guannan Qu.
\newblock Efficient reinforcement learning for global decision making in the
  presence of local agents at scale.
\newblock {\em arXiv preprint arXiv:2403.00222}, 2024.

\bibitem[BATS11]{ben-aroya-avraham-tashma}
Avraham Ben-Aroya and Amnon Ta-Shma.
\newblock A combinatorial construction of almost-ramanujan graphs using the
  zig-zag product.
\newblock {\em SIAM Journal on Computing}, 40(2):267--290, 2011.

\bibitem[CMP{\etalchar{+}}22]{cohen_et_al:LIPIcs.ICALP.2022.43}
Gil Cohen, Dor Minzer, Shir Peleg, Aaron Potechin, and Amnon Ta-Shma.
\newblock {Expander Random Walks: The General Case and Limitations}.
\newblock In Miko{\l}aj Boja\'{n}czyk, Emanuela Merelli, and David~P. Woodruff,
  editors, {\em 49th International Colloquium on Automata, Languages, and
  Programming (ICALP 2022)}, volume 229 of {\em Leibniz International
  Proceedings in Informatics (LIPIcs)}, pages 43:1--43:18, Dagstuhl, Germany,
  2022. Schloss Dagstuhl -- Leibniz-Zentrum f{\"u}r Informatik.

\bibitem[Con19]{conlon2019hypergraph}
David Conlon.
\newblock Hypergraph expanders from cayley graphs, 2019.

\bibitem[CPAM24]{chaudhari2024peer}
Shreyas Chaudhari, Srinivasa Pranav, Emile Anand, and Jos{\'e}~MF Moura.
\newblock Peer-to-peer learning dynamics of wide neural networks.
\newblock {\em arXiv preprint arXiv:2409.15267}, 2024.

\bibitem[CPTS21]{cohen-peri-tashma}
Gil Cohen, Noam Peri, and Amnon Ta-Shma.
\newblock Expander random walks: A fourier-analytic approach.
\newblock In {\em Proceedings of the 53rd Annual ACM SIGACT Symposium on Theory
  of Computing}, STOC 2021, page 1643–1655, New York, NY, USA, 2021.
  Association for Computing Machinery.

\bibitem[CSWY16]{chow2016expander}
Yat-Tin Chow, Wei Shi, Tianyu Wu, and Wotao Yin.
\newblock Expander graph and communication-efficient decentralized
  optimization.
\newblock In {\em 2016 50th Asilomar Conference on Signals, Systems and
  Computers}, pages 1715--1720. IEEE, 2016.

\bibitem[DLV22]{deac2022expander}
Andrea Deac, Marc Lackenby, and Petar Veličković.
\newblock Expander graph propagation, 2022.

\bibitem[Fri03]{10.1215/S0012-7094-03-11812-8}
Joel Friedman.
\newblock {Relative expanders or weakly relatively Ramanujan graphs}.
\newblock {\em Duke Mathematical Journal}, 118(1):19 -- 35, 2003.

\bibitem[Gil98]{gillman}
David Gillman.
\newblock A chernoff bound for random walks on expander graphs.
\newblock {\em SIAM Journal on Computing}, 27(4):1203--1220, 1998.

\bibitem[GK21]{guruswami_et_al:LIPIcs.ITCS.2021.48}
Venkatesan Guruswami and Vinayak~M. Kumar.
\newblock {Pseudobinomiality of the Sticky Random Walk}.
\newblock In James~R. Lee, editor, {\em 12th Innovations in Theoretical
  Computer Science Conference (ITCS 2021)}, volume 185 of {\em Leibniz
  International Proceedings in Informatics (LIPIcs)}, pages 48:1--48:19,
  Dagstuhl, Germany, 2021. Schloss Dagstuhl--Leibniz-Zentrum f{\"u}r
  Informatik.

\bibitem[GK23]{gotlib2023high}
Roy Gotlib and Tali Kaufman.
\newblock No where to go but high: A perspective on high dimensional expanders,
  2023.

\bibitem[Gur22]{guruswami_berkeley_lecture_notes}
Venkatesan Guruswami.
\newblock Great theoretical ideas in computer science.
\newblock
  \url{https://csd.cmu.edu/course-profiles/15-251-Great-Theoretical-Ideas-in-Computer-Science},
  2022.

\bibitem[GV22]{golowich_et_al:LIPIcs.CCC.2022.27}
Louis Golowich and Salil Vadhan.
\newblock {Pseudorandomness of Expander Random Walks for Symmetric Functions
  and Permutation Branching Programs}.
\newblock In Shachar Lovett, editor, {\em 37th Computational Complexity
  Conference (CCC 2022)}, volume 234 of {\em Leibniz International Proceedings
  in Informatics (LIPIcs)}, pages 27:1--27:13, Dagstuhl, Germany, 2022. Schloss
  Dagstuhl -- Leibniz-Zentrum f{\"u}r Informatik.

\bibitem[INW94]{impagliazzo-nisam-wigderson}
Russell Impagliazzo, Noam Nisan, and Avi Wigderson.
\newblock Pseudorandomness for network algorithms.
\newblock In {\em Proceedings of the Twenty-Sixth Annual ACM Symposium on
  Theory of Computing}, STOC '94, page 356–364, New York, NY, USA, 1994.
  Association for Computing Machinery.

\bibitem[Kow13]{kowalski2013expander}
Emmanuel Kowalski.
\newblock Expander graphs--their applications and construction.
\newblock {\em Available at
  https://people.math.ethz.ch/\~kowalski/expander-graphs.pdf}, 2013.

\bibitem[KSSS02]{szemerediregularity}
J.~Komlós, Ali Shokoufandeh, Miklós Simonovits, and Endre Szemerédi.
\newblock The regularity lemma and its applications in graph theory, 2002.

\bibitem[LPA{\etalchar{+}}23a]{NEURIPS2023_a7a7180f}
Yiheng Lin, James~A. Preiss, Emile Anand, Yingying Li, Yisong Yue, and Adam
  Wierman.
\newblock Online adaptive policy selection in time-varying systems: No-regret
  via contractive perturbations.
\newblock In A.~Oh, T.~Naumann, A.~Globerson, K.~Saenko, M.~Hardt, and
  S.~Levine, editors, {\em Advances in Neural Information Processing Systems},
  volume~36, pages 53508--53521. Curran Associates, Inc., 2023.

\bibitem[LPA{\etalchar{+}}23b]{lin2023online2}
Yiheng Lin, James~A Preiss, Emile~Timothy Anand, Yingying Li, Yisong Yue, and
  Adam Wierman.
\newblock Learning-augmented control via online adaptive policy selection: No
  regret via contractive perturbations.
\newblock In {\em ACM SIGMETRICS, Workshop on Learning-augmented Algorithms:
  Theory and Applications 2023}, 2023.

\bibitem[LPX{\etalchar{+}}24]{pmlr-v247-lin24a}
Yiheng Lin, James~A. Preiss, Fengze Xie, Emile Anand, Soon-Jo Chung, Yisong
  Yue, and Adam Wierman.
\newblock Online policy optimization in unknown nonlinear systems.
\newblock In Shipra Agrawal and Aaron Roth, editors, {\em Proceedings of Thirty
  Seventh Conference on Learning Theory}, volume 247 of {\em Proceedings of
  Machine Learning Research}, pages 3475--3522. PMLR, 30 Jun--03 Jul 2024.

\bibitem[Lub17]{https://doi.org/10.48550/arxiv.1711.06558}
Alexander Lubotzky.
\newblock Ramanujan graphs, 2017.

\bibitem[Rei05]{reinfold-st-connectivity}
Omer Reingold.
\newblock Undirected st-connectivity in log-space.
\newblock In {\em Proceedings of the Thirty-Seventh Annual ACM Symposium on
  Theory of Computing}, STOC '05, page 376–385, New York, NY, USA, 2005.
  Association for Computing Machinery.

\bibitem[RR17]{rao-shravas-regv}
Shravas {Rao} and Oded {Regev}.
\newblock {A Sharp Tail Bound for the Expander Random Sampler}.
\newblock {\em arXiv e-prints}, page arXiv:1703.10205, March 2017.

\bibitem[RVW04]{reingold_vadhan_wigderson_2004}
Omer Reingold, Salil Vadhan, and Avi Wigderson.
\newblock Entropy waves, the zig-zag graph product, and new constant-degree,
  Jun 2004.

\bibitem[Sam98]{Samorodnitsky98onthe}
Alex Samorodnitsky.
\newblock On the optimum of delsarte's linear program.
\newblock {\em J. Combinatorial Theory, Ser. A}, 96, 1998.

\bibitem[SH06]{expsurvey}
Avi~Wigderson Shlomo~Hoory, Nathan~Linial.
\newblock Expander graphs and their applications.
\newblock {\em Bulletin of the American Mathematical Society}, 43(4):439–561,
  2006.

\bibitem[SS96]{sipser-spielman}
M.~Sipser and D.A. Spielman.
\newblock Expander codes.
\newblock {\em IEEE Transactions on Information Theory}, 42(6):1710--1722,
  1996.

\bibitem[Tre11]{trevisanlecturenotes}
Luca Trevisan.
\newblock Graph partitioning and expanders.
\newblock
  \url{https://lucatrevisan.github.io/teaching/cs359g-11/lecture01.pdf}, 2011.

\bibitem[TS17]{tashma}
Amnon Ta-Shma.
\newblock Explicit, almost optimal, epsilon-balanced codes.
\newblock In {\em Proceedings of the 49th Annual ACM SIGACT Symposium on Theory
  of Computing}, STOC 2017, page 238–251, New York, NY, USA, 2017.
  Association for Computing Machinery.

\bibitem[Vad13]{vadhan_2013}
Salil Vadhan.
\newblock {\em Pseudorandomness}.
\newblock Book On Demand Ltd, 2013.

\end{thebibliography}

\newpage


\begin{appendix}
\section{Appendix: Technical Lemmas}\label{technical_lemmas}
The following technical lemmas are used at various points in the paper.\\

\begin{claim}\label{claim: bound}
\emph{For $1\leq k\leq \frac{n}{p}$, we can bound $\frac{{n\choose k}^2}{{n\choose {pk}}}\leq (\frac{ne}{k})^{2k} \cdot (\frac{pk}{n})^{pk}$.}
\end{claim}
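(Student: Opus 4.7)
The plan is to apply the two standard sandwich bounds on binomial coefficients separately to the numerator and the denominator. Recall that for integers $1 \leq m \leq N$, one has
\[
\left(\frac{N}{m}\right)^m \;\leq\; \binom{N}{m} \;\leq\; \left(\frac{eN}{m}\right)^m.
\]
The lower bound follows from $\binom{N}{m} \geq N^m/m^m$ after using $m! \geq (m/e)^m$ only for the upper one; the lower inequality $\binom{N}{m} \geq (N/m)^m$ is a well-known consequence of pairing $N!/(N-m)!$ with $m^m$ termwise. Both bounds require $1 \leq m \leq N$, which under the hypothesis $1 \leq k \leq n/p$ is satisfied with $m = k$ and with $m = pk \leq n$.

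First I would apply the upper bound to the numerator: $\binom{n}{k}^2 \leq (en/k)^{2k}$. Then I would apply the lower bound to the denominator: $\binom{n}{pk} \geq (n/(pk))^{pk}$, which after reciprocating gives $1/\binom{n}{pk} \leq (pk/n)^{pk}$. Multiplying the two bounds yields exactly
\[
\frac{\binom{n}{k}^2}{\binom{n}{pk}} \;\leq\; \left(\frac{en}{k}\right)^{2k} \cdot \left(\frac{pk}{n}\right)^{pk},
\]
which is the desired inequality.

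There is no serious obstacle here; the only thing to be careful about is the range of validity. The upper bound $\binom{n}{k} \leq (en/k)^k$ is vacuous when $k = 0$, so the restriction $k \geq 1$ is needed; the lower bound on $\binom{n}{pk}$ requires $pk \leq n$, which is exactly the upper end of the hypothesis $k \leq n/p$. One minor cosmetic point is that the claim as stated in the excerpt writes $(\tfrac{ne}{k})^{2k}$ rather than $(\tfrac{en}{k})^{2k}$; these are of course equal, so no adjustment is required. Hence the proof reduces to one line of each sandwich bound followed by multiplication, and a brief remark that the hypothesis $1 \leq k \leq n/p$ is precisely what is needed to invoke both bounds.
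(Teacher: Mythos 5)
Your proof is correct and is essentially identical to the paper's: both apply the upper bound $\binom{n}{k} \leq (en/k)^k$ to the numerator and the lower bound $\binom{n}{pk} \geq (n/(pk))^{pk}$ to the denominator, then multiply. Your additional remarks about the range of validity ($1\le k$ and $pk\le n$) are a helpful but minor elaboration not spelled out in the paper.
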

\begin{proof}
We use the bound that $(\frac{n}{k})^k \leq {n\choose k} < (\frac{ne}{k})^k$. This gives us that $\frac{{n\choose k}^2}{{n\choose {pk}}} \leq \frac{(\frac{ne}{k})^{2k}}{(\frac{n}{pk})^{pk}}$. Simplifying it yields that $(\frac{ne}{k})^{2k} \cdot (\frac{pk}{n})^{pk}$, which proves the claim. \qedhere
\end{proof}

\vspace{0.2cm} 

\begin{definition} \emph{For $p\geq 2$, let $\omega_p$ denote the $p^{th}$ primitive root of unity if it satisfies $(\omega_p)^p=1$, and if there does not exist $q\in\NN$ where $q<p$ such that $(\omega_p)^q=1$. Specifically, the multiplicative order of the $p^{th}$ primitive root of unity must be $p$. Then, for $1\leq k<p$, we must have that}
\[\sum\limits_{j=0}^{p-1} (\omega_p)^{k j} = \omega_p^{k\cdot 0} + \omega_p^{k\cdot 1} + ... + \omega_p^{k\cdot(p-1)} =0\]
\end{definition}
\begin{proof}
Given $\omega_p^p=1$, it is clear that for $k<p$, $(\omega_p^k)^p=1$. Thus, \[(\omega_p^k)p-1=0=(\omega_p^k-1)(\omega_p^{k\cdot 0} + \omega_p^{k\cdot 1} + ... + \omega_p^{k\cdot(p-1)}).\]
Since $\omega_p^k\neq 1$ as $\omega_p$ is a primitive root of unity, we must have that $\omega_p^{k\cdot 0} + \omega_p^{k\cdot 1} + ... + \omega_p^{k\cdot(p-1)}=0$, which proves the claim.
\end{proof}

\vspace{0.2cm}

\begin{lemma} \emph{Orthogonality of the Krawtchouk functions:}
\[\langle K_r, K_s \rangle = \EE_{b\sim\text{Bin}(n,\frac{1}{2})}[K_r(b)K_s(b)] = \begin{cases}
0 \quad \text{if } r\neq s \\
\genfrac(){0pt}{1}{n}{s} \quad \text{if } r=s\end{cases}\]
\end{lemma}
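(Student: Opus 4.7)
The plan is to reduce the claim to a standard Fourier-orthogonality fact on the Boolean cube, via two maneuvers: first rewriting the expectation as one over a uniformly random $\beta\in\{0,1\}^n$, and then invoking Krawtchouk invariance (Lemma 3.3) to express each $K_k(|\beta|)$ as a sum of characters indexed by $\beta$.

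First I would observe that if $\beta\sim\mathrm{Unif}(\{0,1\}^n)$, then $|\beta|\sim\mathrm{Bin}(n,1/2)$, so
\[
\langle K_r, K_s\rangle \;=\; \EE_{\beta\sim\mathrm{Unif}(\{0,1\}^n)}\!\bigl[K_r(|\beta|)\,K_s(|\beta|)\bigr].
\]
Next, by Krawtchouk invariance, for any fixed $\beta$ of weight $\ell$ we may use $\beta$ itself as the reference string in the definition of $K_k$, writing
\[
K_r(|\beta|) \;=\; \sum_{y\in\{0,1\}^n,\,|y|=r}(-1)^{\beta\cdot y},\qquad K_s(|\beta|) \;=\; \sum_{z\in\{0,1\}^n,\,|z|=s}(-1)^{\beta\cdot z}.
\]

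Multiplying and swapping sums with the expectation gives
\[
\langle K_r, K_s\rangle \;=\; \sum_{\substack{y:\,|y|=r\\ z:\,|z|=s}} \EE_{\beta}\!\bigl[(-1)^{\beta\cdot(y\oplus z)}\bigr],
\]
where addition in the exponent is taken mod $2$. The inner expectation is the standard character orthogonality on $\mathbb{F}_2^n$: it equals $1$ if $y\oplus z=0^n$ and $0$ otherwise. Hence only the diagonal pairs $y=z$ contribute, which is empty unless $r=s$, proving the first case. When $r=s$, the surviving contributions are indexed by the $\binom{n}{s}$ strings $y$ of weight $s$, each contributing $1$, yielding the claimed $\binom{n}{s}$.

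The only subtle step is the second one: legitimately swapping the arbitrary reference string in the definition of $K_k(\ell)$ for the random $\beta$ of matching weight. This is exactly what Krawtchouk invariance (Lemma 3.3) provides, so modulo that appeal the argument is a one-line application of $\mathbb{F}_2$-character orthogonality. No convergence, combinatorial identity, or delicate estimate is needed, so I expect no real obstacle beyond bookkeeping.
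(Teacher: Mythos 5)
Your proof is correct and is essentially the same argument the paper gives in Appendix~A: both reduce to the vanishing of a $\ZZ_2^n$-character sum over the random reference string once the two Krawtchouks are expanded. The only difference is packaging --- you make the randomization of the reference string explicit (uniform $\beta\in\{0,1\}^n$, so that $|\beta|\sim\mathrm{Bin}(n,1/2)$ is automatic) and finish in one line via character orthogonality, whereas the paper normalizes the two sums into expectations over random subsets $B,C$ and carries out the same cancellation through the ``either $x\in A$ or $x\notin A$'' observation, with the randomness of the reference set $A$ left implicit.
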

\begin{proof}
We provide a probabilistic interpretation of the Krawtchouk function as demonstrated in \cite{Samorodnitsky98onthe}. Fix $A\in \genfrac(){0pt}{1}{[n]}{\ell}$, $ B\in_U \genfrac(){0pt}{1}{[n]}{s}$, and choose $C\in_U \genfrac(){0pt}{1}{[n]}{r}$. Then,
\[K_s(\ell) = \genfrac(){0pt}{0}{n}{s} \EE[(-1)^{|A \cap B|}], \quad K_r(\ell) = \genfrac(){0pt}{0}{n}{r} \EE[(-1)^{|A\cap C|}]\]
The inner product of $K_r$ and $K_s$ is then:
\begin{equation*}\begin{split}
\langle K_r, K_s\rangle &= \genfrac(){0pt}{0}{n}{r} \genfrac(){0pt}{0}{n}{s} \EE[(-1)^{|A\cap B|}]\EE[(-1)^{|A\cap C|}]\\
&= \genfrac(){0pt}{0}{n}{r} \genfrac(){0pt}{0}{n}{s} \EE[(-1)^{|A\cap B|}(-1)^{|A\cap C|}] \quad \text{(A, B, and C are independent)}
\end{split}\end{equation*}
If $B \neq C$, then $\EE[(-1)^{|A\cap B|}(-1)^{|A\cap C|} | B \neq C]=0$, since for each $x\in B\Delta C$, we could either have $x\in A$ or $x\notin A$, which contributes a +1 or -1 (or vice versa) (respectively) to the expectation. So, the expected value must be 0. Conversely, if $B=C$, then it must be the case that $r=s$, which occurs with probability $\frac{1}{\genfrac(){0pt}{1}{n}{s}}$. So, $\EE[(-1)^{|A\cap B|}(-1)^{|A\cap C|}]=\EE[(-1)^{2|A\cap B|}]=\frac{1}{{n\choose s}}$. This yields that
\[\langle K_r, K_s \rangle = \begin{cases}
0 \quad \text{if } r\neq s \\
\genfrac(){0pt}{1}{n}{s} \quad \text{if } r=s \end{cases}\qedhere\]
\end{proof}

\newpage
\end{appendix}

\section{Appendix: Generalizing the Krawtchouk Functions into the Complex Domain}
\label{appendix 2}
In this section of the Appendix, we also present a generalization of the Krawtchouk polynomials into the complex domain that yields an elegant method of analysis for bounding the total variation distance of $[\Sigma(S(n,p,\lambda))]_0$ and $[\Sigma(\mathrm U_n^p)]_0$, but only for when $p$ is a prime number. For the generalized sticky random walk, this method yields an upper-bound on TVD$([\Sigma(S(n,p,\lambda))]_0, [\Sigma(\mathrm U_n^p)]_0)$ of $O(\lambda p^{O(p)})$, which matches the upper-bound derived in Corollary 4 of \cite{golowich_et_al:LIPIcs.CCC.2022.27} through Fourier-analytic means, which the body of our paper shows to be suboptimal when the size of the alphabet used is allow to increase asymptotically. Nonetheless, we include a proof of this generalization to present our novel treatment of the Krawtchouk function.\\

\begin{definition}
\emph{Given any set $S_k \subseteq [n]$ with cardinality $|S_k|=k$, let $v_{S_k}$ denote a bit-string in $\{0, 1\}^n$ such that for each $i \in S_k$, $(v_{S_k})_i=0$ and for each $i \in [n] \setminus S_k$, $(v_{S_k})_i = 1$. Similarly, given the context of a prior prime number $p$, let $w_{S_k}$ denote a string in $\ZZ_p^n$ where for each $i \in {S_k}$, $(w_{S_k})_i = 0$, and for each $i \in [n]\setminus S_k$, $(w_{S_k})_i \in [p-1]$}.\\ \end{definition}

\begin{definition}\label{definition 11} \emph{Let $D$ denote the `zero distribution' where for any $\ell \in [n]$, $\Pr[\ell]$ denotes the probability that a string $s \in \ZZ_p^n$ has $\ell$ zeros. Specifically, $\Pr[\ell] = \frac{{n \choose \ell} (p-1)^{n-\ell}}{p^n}$, since there are ${n\choose\ell}$ ways to select the locations for the $\ell$ 0s in a string of length $n$, and $(p-1)^{n-\ell}$ to populate the other $n-k$ locations with characters from $[p-1]$.}\\ \end{definition}

\begin{definition}\emph{For all $p\in\NN$, consider $\omega_p$, the p-th principal root of unity. Then, the generalized Krawtchouk function $K_k(\ell)$, for any $\alpha$ where $\alpha$ has $\ell$ 0s and $n-\ell$ 1s, is defined as:}
\begin{equation*}
    K_k(\ell) = \sum_{\substack{y\in \ZZ_{p}^n \\ \ |y|_0 = k}} (\omega_p)^{\alpha \cdot y}
\end{equation*}
\end{definition}

\begin{lemma}
\emph{Orthogonality of the generalized Krawtchouk function: The generalized Krawtchouk functions form an orthogonal basis of the functions mapping $\ZZ_{n+1} \rightarrow\RR$ (for the distribution $D$ as described in \Cref{definition 11}) with respect to the inner product $\langle f,g \rangle = \EE\limits_{b \sim D}[f(b) g(b)]$.}
\[\langle K_r, K_s\rangle = \begin{cases}
        0, & \text{if } r\neq s \\
        {n \choose r} (p-1)^{n-r}, & \text{if } r=s
        \end{cases}\]
\end{lemma}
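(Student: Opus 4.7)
The plan is to reduce $\langle K_r, K_s\rangle$ to the orthogonality of characters on the abelian group $\ZZ_p^n$. The central observation is that although $K_k(\ell)$ is defined using some $\alpha\in\{0,1\}^n$ with $|\alpha|_0=\ell$, the same formula extends naturally to $\alpha\in\ZZ_p^n$ and its value still depends only on $|\alpha|_0$. Once this extended invariance is in place, the expectation over $b\sim D$ can be replaced by an expectation over $\alpha$ drawn uniformly from $\ZZ_p^n$, after which the problem reduces to a coordinatewise character sum.

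First, I would verify the extended invariance. Fix $\alpha\in\ZZ_p^n$ with $|\alpha|_0=\ell$ and write $L=\{i:\alpha_i=0\}$. Decomposing the sum defining $K_k(\alpha)$ by the support $T$ of $y$, which has size $n-k$, the sum factors coordinatewise:
\[K_k(\alpha)=\sum_{T\in\binom{[n]}{n-k}}\prod_{i\in T}\sum_{y_i=1}^{p-1}\omega_p^{\alpha_i y_i}.\]
For $i\in L$ the inner sum equals $p-1$, and for $i\notin L$ the identity $\sum_{y_i=0}^{p-1}\omega_p^{\alpha_i y_i}=0$ (which uses only $\omega_p^p=1$ and $\omega_p^{\alpha_i}\neq 1$, not primality of $p$) yields $-1$. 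Hence $K_k(\alpha)=\sum_T(p-1)^{|T\cap L|}(-1)^{|T\setminus L|}$ is a real quantity depending only on $\ell$, so $K_k$ is real-valued and the inner product in the lemma need not be Hermitian.

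Second, since the law of $|\alpha|_0$ under uniform $\alpha\in\ZZ_p^n$ is exactly $D$, the extended invariance gives
\[\langle K_r,K_s\rangle=\EE_{\alpha\in_U\ZZ_p^n}[K_r(\alpha)K_s(\alpha)]=\sum_{\substack{|y|_0=r\\ |z|_0=s}}\EE_{\alpha}\bigl[\omega_p^{\alpha\cdot(y+z)}\bigr].\]
The expectation factors coordinatewise, and the character identity $\frac{1}{p}\sum_{a=0}^{p-1}\omega_p^{av}=\mathbbm{1}[v\equiv 0\pmod p]$ gives $\EE_\alpha[\omega_p^{\alpha\cdot(y+z)}]=\mathbbm{1}[y\equiv -z\pmod p]$, componentwise. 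Since $|-z|_0=|z|_0$, this indicator forces $r=s$; in that case I am counting $z\in\ZZ_p^n$ with $|z|_0=r$, giving $\binom{n}{r}(p-1)^{n-r}$, matching the claim.

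The main obstacle will be the extended invariance step: the stated definition of $K_k$ restricts $\alpha$ to $\{0,1\}^n$, so I must show both that the defining formula is well-defined on all of $\ZZ_p^n$ and that it takes a common value across the fiber $|\alpha|_0=\ell$. The factorization above handles this uniformly. Once invariance is in place, character orthogonality on $\ZZ_p^n$ closes the argument with a short computation.
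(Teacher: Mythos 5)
Your proposal is correct, and it takes a cleaner route than the paper's. The paper structures the argument as a probabilistic interpretation modeled on the $p=2$ case: it fixes a representative $v_{A_\ell}$ for each $\ell$, writes $K_k(\ell)$ as $\binom{n}{k}(p-1)^{n-k}$ times an expectation over a random $w$ with $|w|_0=k$, and then expands $\sum_\ell \Pr[\ell]\,K_r(\ell)\overline{K_s(\ell)}$, keeping the sum over $\ell$ explicit and reasoning about the distribution of $w_{B_r}-w_{C_s}$ to kill the off-diagonal terms. That reasoning is delicate (the difference of two constrained random strings is not literally uniform). Your approach eliminates the sum over $\ell$ at the outset: once you observe that the defining sum extends to arbitrary $\alpha\in\ZZ_p^n$ and depends only on $|\alpha|_0$, the $D$-expectation over $b$ becomes a uniform expectation over $\alpha\in\ZZ_p^n$, and the entire inner product collapses to coordinatewise character orthogonality on $\ZZ_p$. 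This is a genuinely different (and tighter) decomposition: it replaces the paper's distributional hand-waving with a single application of $\frac{1}{p}\sum_a \omega_p^{av}=\mathbbm{1}[p\mid v]$. Two additional points worth flagging as gains from your version: the coordinatewise factorization simultaneously establishes that $K_k$ is real-valued, which the paper needs but does not address when it slips between $K_s(\ell)$ and $\overline{K_s(\ell)}$; and, as you note parenthetically, nowhere in this lemma is primality of $p$ actually used (only $\omega_p^{\alpha_i}\neq 1$ for $\alpha_i\in\{1,\dots,p-1\}$), even though the appendix as a whole is stated under a primality hypothesis.
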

\begin{proof}
In a similar vein to the proof of Lemma 9.1 in \Cref{technical_lemmas}, we start by providing a probabilistic interpretation of the generalized Krawtchouk function. Fix $A\in \genfrac(){0pt}{1}{[n]}{\ell}$, and choose $ B\in_U \genfrac(){0pt}{1}{[n]}{r}$ and $C\in_U \genfrac(){0pt}{1}{[n]}{s}$. This is equivalent to fixing a string $v_{A_\ell} \in \ZZ_p^n$ where $|v_{A_\ell}|_0=\ell$ and $(v_{A_\ell})_t = 0$ for all $t\in A$, and randomly choosing $w_{B_r}, w_{C_s} \in \ZZ_p^n$ where $|w_{B_r}|_0=r$ and $|w_{C_s}|_0=s$. Then,
\[K_s(\ell) = \genfrac(){0pt}{0}{n}{s} (p-1)^{n-s} \mathbb{E}_{B_r}[\omega_p^{\langle v_{A_\ell}, w_{B_r}\rangle}], \quad K_r(\ell) = \genfrac(){0pt}{0}{n}{r} (p-1)^{n-r} \EE_{C_s}[\omega_p^{\langle v_{A_\ell}, w_{C_s}\rangle}]\]
The inner product of $K_r$ and $K_s$ (with respect to the probability distribution $D$) is then:
\begin{align*}
\langle K_r, K_s\rangle &= \sum\limits_{\ell=0}^n \Pr[\ell] K_r(\ell) \overline{K_s(\ell)} \\
&= \sum\limits_{\ell=0}^n \genfrac(){0pt}{0}{n}{\ell}\genfrac(){0pt}{0}{n}{r} \genfrac(){0pt}{0}{n}{s} \frac{(p-1)^{n-\ell}}{p^n} (p-1)^{2n-s-r}\EE\limits_{B_r}[\omega_p^{\langle v_{A_\ell}, w_{B_r}\rangle}] \EE\limits_{C_s}[\overline{\omega}_p^{\langle v_{A_\ell}, w_{C_s}\rangle}]\\
&= \genfrac(){0pt}{0}{n}{r} \genfrac(){0pt}{0}{n}{s} \frac{(p-1)^{2n-s-r}}{p^n} \sum\limits_{\ell=0}^n \genfrac(){0pt}{0}{n}{\ell} (p-1)^{n-\ell}\EE\limits_{B_r, C_s}[\omega_p^{\langle v_{A_\ell}, w_{B_r} \rangle-\langle v_{A_\ell}, w_{C_s}\rangle}] \quad \text{(A, B, and C are independent)}\\
&= \genfrac(){0pt}{0}{n}{r} \genfrac(){0pt}{0}{n}{s} \frac{(p-1)^{2n-s-r}}{p^n} \sum\limits_{\ell=0}^n \genfrac(){0pt}{0}{n}{\ell} (p-1)^{n-\ell}\EE\limits_{B_r, C_s}[\omega_p^{\langle v_{A_\ell}, w_{B_r} - w_{C_s}\rangle \mod p}]
\end{align*}
The last line in the derivation follows by the bilinearity of the inner product. If $r\neq s$, then $B_r\neq C_s$ and $\EE[\omega_p^{\langle v_{A_\ell},  w_{B_r}-w_{C_s}\rangle \mod p}]=\EE[\omega_p^{\langle v_{A_\ell},  w_{B_r}\rangle \mod p}]$ since the distribution of $B-C$ is uniformly random (as are the distributions of $B$ and $C$). Therefore, the distribution of $B-C$ must be indistinguishable from the distribution of $B$ under the expectation operator. Additionally, $\EE[\omega_p^{\langle A,B\rangle\mod p}]=\sum_{j=0}^p\omega_p^{kj}=0$. If $B=C$, (which is only when $r=s$), the associated probability of it occurring must be  $\frac{1}{{n\choose r} (p-1)^{n-r}}$. \\

In this case, $\langle K_r, K_s\rangle = {n \choose s}{n\choose r} (p-1)^{2n-r-s}\EE[\omega^{\langle A,0 \rangle}] = {n \choose s} (p-1)^{n-r}$.
    \[
        \EE\limits_{b\sim D}[K_r(b) K_s(b)] = \begin{cases}
        0, & \text{if } r\neq s \\
        {n \choose r} (p-1)^{n-r}, & \text{if } r=s
        \end{cases}
    \qedhere\]
\end{proof}    

\vspace{0.2cm}

\begin{lemma}\emph{The generalized Krawtchouk function $K_k(\ell)$ is invariant against choices of $\alpha\in\{0,1\}^n$, where $|\alpha|_0=\ell$. Specifically, we have that for a fixed $B\in \ZZ_p^n$ where $|B|_0=k$, that}
\end{lemma}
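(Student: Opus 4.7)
The plan is to prove that $K_k(\ell)$ is well-defined, i.e., invariant under the choice of $\alpha \in \{0,1\}^n$ with $|\alpha|_0 = \ell$, by a coordinate-permutation symmetry argument that parallels the classical $(-1)$-valued case. The governing intuition is that any two such $\alpha, \alpha'$ differ only by a relabeling of coordinate positions, and the index set $\{y \in \ZZ_p^n : |y|_0 = k\}$ is closed under such relabelings.

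Concretely, I would first fix $\alpha, \alpha' \in \{0,1\}^n$ with $|\alpha|_0 = |\alpha'|_0 = \ell$. Since these two strings have the identical multiset of entries (exactly $\ell$ zeros and $n-\ell$ ones), there exists a permutation $\pi \in S_n$ with $\alpha'_i = \alpha_{\pi(i)}$ for every $i \in [n]$. For $y \in \ZZ_p^n$, define the permuted string $(\pi \cdot y)_i := y_{\pi(i)}$. A direct reindexing of the inner product then yields
\[
\langle \alpha', y \rangle \;=\; \sum_{i=1}^n \alpha_{\pi(i)} \, y_i \;=\; \sum_{j=1}^n \alpha_j \, y_{\pi^{-1}(j)} \;=\; \langle \alpha, \pi^{-1} \cdot y \rangle \pmod{p},
\]
so that $\omega_p^{\langle \alpha', y \rangle} = \omega_p^{\langle \alpha, \pi^{-1} \cdot y \rangle}$.

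Next, I would observe that permuting coordinates preserves the Hamming profile: in particular $|\pi^{-1} \cdot y|_0 = |y|_0$, so the map $y \mapsto \pi^{-1} \cdot y$ is a bijection on $\{y \in \ZZ_p^n : |y|_0 = k\}$. Substituting $y' = \pi^{-1} \cdot y$ in the Krawtchouk sum gives
\[
\sum_{\substack{y \in \ZZ_p^n \\ |y|_0 = k}} \omega_p^{\langle \alpha', y \rangle} \;=\; \sum_{\substack{y \in \ZZ_p^n \\ |y|_0 = k}} \omega_p^{\langle \alpha, \pi^{-1}\cdot y \rangle} \;=\; \sum_{\substack{y' \in \ZZ_p^n \\ |y'|_0 = k}} \omega_p^{\langle \alpha, y' \rangle},
\]
which is precisely the claim that $K_k(\ell)$ depends only on $\ell = |\alpha|_0$, not on the particular $\alpha$. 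Dividing both sides by $\binom{n}{k}(p-1)^{n-k}$ upgrades this to the expectational formulation against a uniformly random $B \in \ZZ_p^n$ with $|B|_0 = k$, matching the statement of the original (real-valued) Krawtchouk invariance lemma.

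No substantive obstacle is anticipated: the argument is a routine symmetry/relabeling argument, and its only subtlety is the clean bookkeeping of how $\pi$ acts on the dot product versus on the summation index set. The only place where one must exercise care is in verifying that the reindexing of $\langle \alpha', y \rangle$ into $\langle \alpha, \pi^{-1}\cdot y \rangle$ holds \emph{modulo $p$}, which is automatic since both sides are integers evaluated in the exponent of $\omega_p$.
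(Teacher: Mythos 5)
Your argument is correct, and it takes a genuinely different — and cleaner — route than the paper's. The paper phrases the task as showing $\omega_p^{\langle A,B\rangle}\cdot\bigl(\EE_{A'}[\omega_p^{\langle A',B\rangle}]\bigr)^{-1}=1$, then passes to $\EE_{A'}[\omega_p^{-\langle A',B\rangle}]$ as if the reciprocal of an expectation were the expectation of the reciprocal, argues that $A-A'$ is uniformly distributed on the weight-$\ell$ slice for fixed $A$ (which requires justification, since that slice is not a group under subtraction), and then invokes a confusing manipulation of $\sum_j \omega_p^{kj}$. Your approach sidesteps all of that: you observe that any two $\alpha,\alpha'\in\{0,1\}^n$ with $|\alpha|_0=|\alpha'|_0=\ell$ are related by a coordinate permutation $\pi$, that $\langle\alpha',y\rangle=\langle\alpha,\pi^{-1}\cdot y\rangle$ exactly (hence mod $p$), and that $y\mapsto\pi^{-1}\cdot y$ is a bijection of the slice $\{y\in\ZZ_p^n:|y|_0=k\}$, so the Krawtchouk sum is unchanged. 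This is a direct, pointwise well-definedness proof of $K_k(\ell)$ that immediately implies the expectational form in the lemma, it uses only elementary reindexing, and it avoids the delicate distributional claims the paper's proof rests on. In short, you prove a stronger fact (identity of the sums for every pair $\alpha,\alpha'$, not just equality of averages) by a more robust argument.
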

\[\EE_{\substack{\text{Fixed }A \in \ZZ_2^n \\|A|_0=\ell} }[\omega_p^{\langle A,B\rangle}] = \EE_{\substack{\text{Random }A' \in \ZZ_2^n \\|A'|_0=\ell} }[\omega_p^{\langle A',B\rangle}]\]
\begin{proof}
The problem is equivalent to showing that 
\[\omega_p^{\langle A,B\rangle} \cdot \frac{1}{\EE\limits_{A'}[\omega_p^{\langle A',B\rangle}]} = \omega_p^{\langle A,B\rangle} \cdot \EE\limits_{A'}\bigg[\frac{1}{\omega_p^{\langle A',B\rangle}}\bigg] \overset{?}{=} 1\]
By the independence of $A, A'$, and $B$, the problem reduces to showing that
\[\EE\limits_{A'}\bigg[\frac{\omega_p^{\langle A,B\rangle}}{\omega_p^{\langle A',B\rangle}}\bigg] = \EE\limits_{ A'}[\omega_p^{\langle A-A',B\rangle}] \overset{?}{=} 1\]
Since $A'$ is uniformly random in $\ZZ_2^n$, $A-A'$ must also be uniformly random for a fixed $A$. So, the problem reduces further to showing (under the same conditions of $B$ and $A'$) that $\EE[\omega_p^{\langle A', B\rangle} ]\overset{?}{=} 1$. Since $\sum_{j=0}^p \omega_p^{k j}=0$ from definition 2.5, we must have that $\sum_{j=0}^{p-1}\omega_p^{kj}=1$. Thus, $\EE[\omega_p^{\langle A', B\rangle} ]= \sum_{j=0}^{p-1}\omega_p^{kj}=1$, which proves the claim.
\end{proof}
\vspace{0.2cm}

\begin{corollary}
    \emph{The orthogonality of the generalized Krawtchouk function implies a reciprocity relation:}
\[\frac{K_k(\ell)}{{n\choose k}(p-1)^{n-k}} = \frac{K_s(\ell)}{{n\choose s}(p-1)^{n-s}}\]
\end{corollary}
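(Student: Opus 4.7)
The plan is to combine the probabilistic interpretation of the generalized Krawtchouk function (the main tool used in the orthogonality lemma above) with the invariance lemma, so as to rewrite both sides of the claimed reciprocity as the same expectation over a pair of random subsets of $[n]$. (I read the statement as $K_k(\ell)/[\binom{n}{k}(p-1)^{n-k}] = K_\ell(k)/[\binom{n}{\ell}(p-1)^{n-\ell}]$, which is the form used inside the proof of Lemma 3.3.)

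First I would invoke the probabilistic identity $K_k(\ell) = \binom{n}{k}(p-1)^{n-k}\,\EE_{B}[\omega_p^{\langle v_A,\, w_B\rangle}]$, which holds for any fixed $A \in \binom{[n]}{\ell}$, and then use the invariance lemma to replace this fixed $A$ by a uniformly random $A \in \binom{[n]}{\ell}$ without changing the value. Second, I would marginalize over the nonzero entries of $w_B$. Expanding $\langle v_A, w_B\rangle = \sum_{i \in A^c \cap B^c}(w_B)_i$ (because $v_A$ vanishes on $A$ and $w_B$ vanishes on $B$) and using that each $(w_B)_i$ for $i \in B^c$ is independent and uniform on $\{1,\dots,p-1\}$ together with $\sum_{v=1}^{p-1}\omega_p^v = -1$, the inner expectation collapses to $(-1/(p-1))^{|A^c \cap B^c|}$. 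Hence
\[\frac{K_k(\ell)}{\binom{n}{k}(p-1)^{n-k}} \;=\; \EE_{\substack{A \in \binom{[n]}{\ell} \\ B \in \binom{[n]}{k}}}\!\left[\left(\frac{-1}{p-1}\right)^{|A^c \cap B^c|}\right].\]

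Finally, I would observe that the right-hand side is manifestly symmetric in the pair of sizes $(k,\ell)$: the integrand $|A^c \cap B^c|$ is invariant under the swap $(A,B)\mapsto(B,A)$, and this swap is a measure-preserving bijection from $\binom{[n]}{\ell}\times\binom{[n]}{k}$ onto $\binom{[n]}{k}\times\binom{[n]}{\ell}$. Running exactly the same reduction starting from $K_\ell(k)$ produces the identical expectation, which gives the reciprocity. The main (and essentially only) obstacle is the marginalization step: one has to partition the coordinates into the four regions carved out by $A$ and $B$ and check that only the coordinates in $A^c \cap B^c$ contribute a nontrivial factor of $-1/(p-1)$, while coordinates in $A$ contribute $1$ (killed by $v_A$) and coordinates in $A^c \cap B$ contribute $1$ (killed by $w_B$). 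Once this bookkeeping is verified, the symmetry argument closes the proof immediately.
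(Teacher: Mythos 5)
Your argument is correct, and the paper itself states this corollary without any proof, so there is no ``paper's approach'' to compare against --- you have filled an actual gap. Two observations are worth recording. First, you correctly reinterpreted the statement: as printed, with $K_s(\ell)$ and ${n\choose s}(p-1)^{n-s}$ on the right-hand side, the identity would assert that $K_k(\ell)/[{n\choose k}(p-1)^{n-k}]$ is independent of $k$, which is false; from the two places it is invoked (the last step of Lemma~3.5 and the $\hat{q}(0)K_0(b)=1$ step in the variance computation), the intended relation is $K_k(\ell)/[{n\choose k}(p-1)^{n-k}] = K_\ell(k)/[{n\choose \ell}(p-1)^{n-\ell}]$, which is exactly what you prove. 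Second, you correctly notice that orthogonality is not what drives the result, despite the corollary's phrasing: the real engine is the probabilistic representation of $K_k(\ell)$ as an average of $\omega_p^{\langle v_A, w_B\rangle}$, the invariance lemma (which lets you randomize $A$ as well as $B$), and the coordinatewise marginalization $\EE[\omega_p^{(w_B)_i}] = \frac{1}{p-1}\sum_{v=1}^{p-1}\omega_p^{v} = -\frac{1}{p-1}$ over each of the $|A^c\cap B^c|$ coordinates that survive both support constraints. The resulting expression $\EE_{A\in_U {[n]\choose \ell},\, B\in_U {[n]\choose k}}\bigl[(-1/(p-1))^{|A^c\cap B^c|}\bigr]$ is manifestly symmetric under the exchange $(A,\ell)\leftrightarrow(B,k)$, and reciprocity follows immediately. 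This is the standard combinatorial derivation of Krawtchouk reciprocity, correctly adapted to the $p$-ary setting, and it would be a worthwhile addition to the paper.
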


\vspace{0.2cm}

We now provide a brief calculation for the expectation of the generalized Krawtchouk function, since it is necessary for bounding the total variational distance between $[\Sigma(S(n,p,\lambda))]_0$ and $[\Sigma(U_p^n)]_0$. \\

\begin{definition}
\emph{Given any set $T \subseteq[n]$ such that $|T|=k$, let $a_1 < ... < a_k$ be the elements of $T$ in increasing order. Then, for any $c\in \ZZ_p$, let
\[\mathrm{shift}_c(T) = \sum\limits_{i=0}^{\floor{|k-c|/p}}(a_{c+i p}-a_{c+i p-1})\]
Then, for any $c$ such that $k \mod p = -c$, and for any $d\in \ZZ_{n+1}$, let $\phi_c(d)$ denote the number of subsets of $[n]$ of size $k$ such that $\mathrm{shift}_c(T)=d$. Note that for any $t\leq 0, a_t=0$.}\\
\end{definition}

\begin{lemma} \emph{The expectation of the Krawtchouk function is}: 
\[\EE[K_k(|s|_0)]=\begin{cases}(p-1)^{n-k} \sum\limits_{d=k}^{n-k}\phi_0(d)\lambda^d, & c= k\!\!\!\!\mod p \equiv 0 \\ 0, & c= k\!\!\!\!\mod p \not\equiv 0\end{cases}\]
\end{lemma}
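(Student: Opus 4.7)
The plan is to mirror the proof of Lemma 5.1 in Section 5, replacing the kernel $(-1)^{(\cdot)}$ with the complex kernel $\omega_p^{(\cdot)}$ throughout. My first step is to strengthen the preceding invariance lemma from binary $\alpha \in \ZZ_2^n$ to general $\alpha \in \ZZ_p^n$: multiplying any nonzero coordinate of $\alpha$ by $c \in [p-1]$ can be absorbed into the change of variables $y_i \mapsto c^{-1} y_i$, which is a bijection on $\ZZ_p$ preserving $|y|_0 = k$. Hence $\sum_{y:\,|y|_0=k} \omega_p^{\alpha \cdot y}$ depends on $\alpha$ only through $|\alpha|_0$, which licenses the substitution $\alpha := s$ and, after swapping sum with expectation, yields
\[
\EE_s\bigl[K_k(|s|_0)\bigr] \;=\; \sum_{y\in\ZZ_p^n,\ |y|_0=k} \EE_s\bigl[\omega_p^{s\cdot y}\bigr].
\]

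Next I parameterize each $y$ by its support $T=\{a_1<\cdots<a_{n-k}\}$ and its nonzero values $(y_i)_{i\in T}\in[p-1]^{n-k}$, and expose the Markov structure of $s$ via independent increments $u_1,\dots,u_n$ with $s_i = u_1+\cdots+u_i \pmod p$, where $u_1 \sim U[\ZZ_p]$ and, for $j\ge 2$, $u_j$ is the mixture of uniform on $\ZZ_p$ with a delta at $0$ that reproduces the sticky transition probabilities. Setting $Y_j := \sum_{i\in T,\ i\ge j} y_i$, this rewrites $s\cdot y = \sum_{j=1}^n u_j Y_j$, and independence factors the expectation as $\prod_j \EE\bigl[\omega_p^{u_j Y_j}\bigr]$.

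I then evaluate each factor using the primitive-root identity from Definition 5. For $j=1$, uniformity of $u_1$ gives $\EE[\omega_p^{u_1 Y_1}] = \mathbbm{1}[Y_1 \equiv 0 \bmod p]$, collapsing all terms with $Y_1\not\equiv 0$. For $j\ge 2$, a direct calculation analogous to the $\EE[(-1)^{u_j}] = \lambda$ step in Lemma 5.1 gives a factor of $\lambda$ whenever $Y_j\not\equiv 0 \bmod p$ and a factor of $1$ when $Y_j\equiv 0$. Combining, each summand evaluates to $\mathbbm{1}[Y_1\equiv 0]\cdot \lambda^{d(T,y)}$, where $d(T,y) = \#\{j : Y_j \not\equiv 0 \bmod p\}$.

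The remaining step, which I expect to be the main obstacle, is recasting $\sum_y \mathbbm{1}[Y_1\equiv 0]\,\lambda^{d(T,y)}$ into the combinatorial form $(p-1)^{n-k}\sum_d \phi_0(d)\lambda^d$. The key observation is that the partial sums $W_l := \sum_{i\le l} y_{a_i} \pmod p$ determine the piecewise-constant sequence $Y_j$, the constraint $Y_1 \equiv 0$ is equivalent to $W_{n-k}=0$, and the runs on which $Y_j\equiv 0$ correspond precisely to the blocks excised by the shift function $\text{shift}_0$ from Definition 6. Summing over value-sequences $(y_i)_{i\in T}\in [p-1]^{n-k}$ satisfying $W_{n-k}\equiv 0$ amounts to counting closed walks of length $n-k$ in $\ZZ_p$ with no stationary step, which (after normalizing by $p$) produces the $(p-1)^{n-k}$ prefactor; the exponent collected then equals $\text{shift}_0(T)$, yielding the $\phi_0(d)$ enumeration. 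The divisibility condition $k\equiv 0\bmod p$ emerges as the condition that any such closed walk exists at all, so the expectation vanishes otherwise. The delicate bookkeeping here — exhibiting an explicit bijection between admissible value-sequences and pairs $(T, \text{shift}_0(T))$, and checking that the per-step $\lambda$-coefficient matches under the paper's sticky parameterization — is what I expect to consume most of the proof.
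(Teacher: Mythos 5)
Your high-level plan matches the paper's own proof: expand $K_k$, express $s$ via independent increments $u_j$ with $s_i=u_1+\dots+u_i$, factor the expectation over $j$, and use the primitive-root identity to get a factor of $1$ or $\lambda$ per step. The first two paragraphs of your proposal are essentially the paper's steps with the same bookkeeping. Where you diverge is in how the nonzero values of $y$ are handled, and that is where the gap lies.

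The paper pulls the sum over nonzero values $\beta\in[p-1]^{n-k}$ out as a factor $(p-1)^{n-k}\,\EE_\beta[\cdot]$ \emph{before} evaluating the per-step expectations, and then argues (via the claim that $\beta_i u_j \bmod p$ is distributed like $u_j$, for $p$ prime) that the $\beta$-dependence disappears; after that, each factor $\EE[\omega_p^{|T_j| u_j}]$ depends only on the \emph{cardinality} $|T_j|=|\{i\in\overline{T}: i\ge j\}|$, i.e.\ only on the support set, not on the values. You instead define $Y_j=\sum_{i\in T,\,i\ge j} y_i$, which is an actual modular sum of values, and obtain an exponent $d(T,y)=\#\{j: Y_j\not\equiv 0\}$ that genuinely depends on $y$. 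Your final paragraph then asserts ``the runs on which $Y_j\equiv 0$ correspond precisely to the blocks excised by $\mathrm{shift}_0$,'' but that correspondence does not hold pointwise: writing $W_m$ for the suffix sums, the indices $m$ with $W_m\equiv 0\pmod p$ depend on the specific value-sequence, not only on the support. For instance with $p=3$, $n-k=6$ and $T=\{1,\dots,6\}$, the admissible sequence $(1,1,1,1,1,1)$ gives $d=4$ while $(1,2,2,1,1,2)$ gives $d=3$, so $\lambda^{d(T,y)}$ is not a single power $\lambda^{\mathrm{shift}_0(T)}$ across admissible $y$. What you would actually need is that $\sum_{y\colon Y_1\equiv 0}\lambda^{d(T,y)} = (p-1)^{n-k}\lambda^{\mathrm{shift}_0(T)}$ after summing over $y$, and the bijection you sketch does not deliver that; the paper's route of removing the $\beta$-dependence \emph{before} the factor evaluation (so that the surviving exponent is a deterministic function of $T$) is the step that avoids this, and it is the step your proposal would need to replicate or replace with a correct averaging identity.

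Two minor remarks: your invariance-lemma upgrade (rescaling $y_i\mapsto c^{-1}y_i$) requires $p$ prime for the inverses to exist, which is an assumption of the appendix but worth flagging; and the exponent should be counted only over $j\ge 2$ since the $j=1$ factor is the indicator $\mathbbm{1}[Y_1\equiv 0]$, although under that indicator the two counts coincide.
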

\begin{proof}
\begin{align*}
\EE[K_k(|s|_0)] = \sum\limits_{s\sim S(n,p,\lambda)} \Pr[s] \sum_{\substack{y\in \ZZ_p^n \\ |y|_0 = k}} \omega_p^{y\cdot s} = \sum\limits_{s\sim S(n,p,\lambda)} \Pr[s] \sum_{\substack{y\in \ZZ_p^n \\ |y|_0 = k}} \omega_p^{\sum\limits_{i=1}^n y_i\cdot s_i} = \sum\limits_{s\sim S(n,p,\lambda)} \Pr[s] \sum_{\substack{y\in \ZZ_p^n \\ |y|_0 = k}} \prod\limits_{i=1}^n\omega_p^{y_i\cdot s_i}
\end{align*}
The dot-product on the exponent of $\omega$ only takes the summation of the element-wise product of $\alpha$ and $y$ for positions on $y$ that are non-zero. We can rewrite this summation by considering the location of non-zero terms in $y$. So, for $T=\{a_1 < ... < a_{n-k}\}$:
\[\EE[K_k(|s|_0)]=\sum\limits_{s\sim S(n,p,\lambda)} \Pr[s] \sum\limits_{T\in{{[n]}\choose {n-k}}} \sum\limits_{\beta \in [p-1]^{n-k}} \prod\limits_{i\in T}\omega_p^{\beta_i s_i}\]
Further, choosing a $T\in {{[n]}\choose {n-k}}$ implies a choice of $\overline{T} = {{[n]}\choose k}=[n]\setminus T$. Hence, the summation reduces to:\\
\begin{equation*}\EE[K_k(|s|_0)]=\sum\limits_{s\sim S(n,p,\lambda)} \Pr[s] \sum\limits_{T\in{{[n]}\choose k}} \sum\limits_{\beta \in [p-1]^{n-k}} \prod\limits_{i\in \overline{T}}\omega_p^{\beta_i s_i}= \sum\limits_{T\in{{[n]}\choose k}} \sum\limits_{\beta \in [p-1]^{n-k}} \EE\limits_{s\sim S(n,p,\lambda)} \bigg[ \prod \limits_{i\in \overline{T}}\omega_p^{\beta_i s_i}\bigg]\end{equation*}

Next, observe that the sticky random walk is a Markov chain where $\omega_p^{s_i}=\omega_p^{s_{i-1}}$ with probability $\frac{1}{p} + (p-1)\lambda$. We can instead model the transitions of strings from the sticky random walk as random variables $u$, where $u_1$ is uniformly distributed in $\ZZ_p$ and for $i\geq 2$, $u_i$ is uniformly distributed on $(1-\lambda)U[\ZZ_p]+\lambda \cdot \mathbbm{1}_0$. Intuitively, $\lambda$ is the additional probability of staying on the same vertex. So, for each $s\!\sim\!S$, we refactor $s$ to $\Tilde{s}=\{\Tilde{s}_1, ..., \Tilde{s}_n\}$, where $\Tilde{s}_i = \mathbbm{1}\{s_i \neq 0\}$. Note then that since $\beta_i$ is uniformly random in $[p-1]$, that $\beta_i s_i \mod p$, and therefore $\beta_i \tilde{s}_i \mod p$ must also be uniformly random in $[p-1]$. Therefore, $\beta_i \tilde{s}_i$ and $\beta_i s_i$ are both uniformly random in $[p-1]$ and have the same distributions. Hence, we write that:
\begin{equation*}
    \begin{split}
        \EE\limits_{\beta, s}\bigg[\prod\limits_{i\in \overline{T}} \omega_p^{\beta_i s_i}\bigg] = \EE\limits_{\beta, s}\bigg[\prod\limits_{i\in \overline{T}} \omega_p^{\beta_i \Tilde{s}_i}\bigg] &= \EE\limits_{\beta, s}\bigg[\omega_p^{\sum\limits_{i\in \overline{T}}\beta_i \sum\limits_{j=1}^i u_j}\bigg] = \prod\limits_{j=1}^{a_{n-k}} \EE\limits_{\beta, s}\bigg[\omega_p^{\sum\limits_{i\in \overline{T}; i\geq j}\beta_i u_j}\bigg]
    \end{split}
\end{equation*}

Since $u_j$ is random, the distribution of $\beta_i u_j \mod p$ must also be random, and therefore indistinguishable from the distribution of $u_j$. Therefore, $\beta_i u_j \!\!\mod p$ and $u_j \!\!\mod p$ are invariant under the expectation operation of its exponentiation under $\omega_p$. So,
\begin{equation*}
    \begin{split}
        \EE\limits_{\beta, s}\bigg[\prod\limits_{i\in \overline{T}} \omega_p^{\beta_i s_i}\bigg] &= \prod\limits_{j=1}^{a_{n-k}} \EE\bigg[\omega_p^{\sum\limits_{i\in \overline{T}; i\geq j} u_j}\bigg]
\end{split}
\end{equation*}
When $j=1$, the above definition directly implies that:
\[\EE\bigg[\omega_p^{\sum\limits_{i\in \overline{T}; i\geq 1} u_1}\bigg] = \EE[\omega_p^{|\overline{T}|u_1}] = \begin{cases}
1, \quad\text{if }|\overline{T}| \mod p \equiv 0 \\
0, \quad\text{otherwise}\end{cases}\]
Conversely, when $j\geq 2$, let $T_j = \{i\in \overline{T}; i\geq j\}$. Then, 
\[\EE\bigg[\omega_p^{\sum\limits_{i\in \overline{T}; i \geq j}u_j}\bigg] = \EE[\omega_p^{|T_j|u_j}] = \begin{cases}
1, \quad\quad\quad \text{if } |T_j| \mod p \equiv 0 \\
\EE[\omega_p^{u_j}], \quad \text{otherwise}\\
\end{cases}\]

Next, observe that for $j\geq 2$, $\EE[\omega_p^{u_j}]=\lambda$.
\begin{proof} We write $u_j$ in terms of our refactoring (similarly to \Cref{sec:expectations}) and take the expectation:
\begin{align*}
\EE[\omega_p^{u_j}] &= \EE[\omega_p^{(1-\lambda)U[\ZZ_p]+\lambda\cdot\mathbbm{1}_0}] 
= \sum\limits_{k=0}^{p-1} \omega_p^{k} \Pr[u_j=k]\\
&= \omega_p^0 \Pr[u_j=0] + \omega_p^1 \Pr[u_j=1] + ... + \omega_p^{p-1} \Pr[u_j=p-1] \\
&= \omega_p^0 \bigg(\frac{1}{p} + \lambda\bigg(\frac{p-1}{p}\bigg)\bigg) + \omega_p^1 \bigg(\frac{1}{p} - \frac{\lambda}{p}\bigg) + ... + \omega_p^{p-1} \bigg(\frac{1}{p} - \frac{\lambda}{p}\bigg)\\
&= \frac{1}{p}\sum\limits_{k=0}^{p-1}\omega_p^k - \frac{\lambda}{p}\sum\limits_{k=0}^{p-1}\omega_p^k + \lambda\omega_p^0\\
&= \lambda \qedhere
\end{align*}\end{proof}
Then, for $k \mod p \equiv 0$, we have that
\begin{equation*}\begin{split}
    \EE[K_k(|s|_0)] &= \sum\limits_{T\in{[n]\choose k}}\sum\limits_{\beta\in[p-1]^{n-k}}\EE\limits_{\substack{s\sim S}}\bigg[\prod\limits_{i\in \overline{T}}\omega_p^{\beta_i s_i}\bigg] \\
    &= (p\!-\!1)^{n-k} \!\!\!\sum\limits_{T\in{[n]\choose k}}\EE\limits_{\substack{ s\sim S \\ \beta\in[p-1]^{n-k}}}\bigg[\prod\limits_{i\in \overline{T}}\omega_p^{\beta_i s_i}\bigg] \\
    &= \!(p\!-\!1)^{n-k}\!\!\! \sum\limits_{T\in{[n]\choose k}}\prod\limits_{j=1}^{a_{n-k}}\lambda \\
    &= (p\!-\!1)^{n-k}\!\!\! \sum\limits_{T\in{[n]\choose k}}\lambda^{a_{n-k}}
\end{split}
\end{equation*}
We then parameterize the summation over every possible value of the shift of T (for $k\!\!\mod p \equiv 0$):
\[\EE[K_k(|s|_0)] = (p\!-\!1)^{n-k} \sum\limits_{d=k}^{n-k} \bigg(\sum_{\substack{T\in{[n]\choose k}\\ \text{shift}_0\text{(T)=d}}} 1\bigg) \lambda^d = (p\!-\!1)^{n-k} \sum\limits_{d=k}^{n-k} \phi_0(d) \lambda^d\]
This yields the claim.\\
\end{proof}

\begin{lemma}\emph{For $c\in\NN$ where $0\leq c\leq p$, and for $d\in\NN$ where $k\leq d\leq n-k$, the number of $k$-sized subsets of $[n]$ that satisfy $\mathrm{shift}_c(T)=d$ is:}
\[\phi_c(d) = \sum_{\substack{T\in{{[n]}\choose k}\\ \mathrm{shift}_c(T)=d}}1 = {{d-1}\choose{\floor{\frac{|k-c|}{p-1}}-1}}{{n-d}\choose {\floor{\frac{|k-c|}{p-1}}}}\]
\end{lemma}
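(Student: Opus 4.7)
The plan is to establish the formula by a stars-and-bars argument on the gap sequence of $T$. First I would biject each $k$-subset $T=\{a_1<\cdots<a_k\}\subseteq[n]$ (with the convention $a_0=0$) to its gap vector $(b_1,\ldots,b_k,b_{k+1})$, where $b_i = a_i - a_{i-1}\ge 1$ for $1\le i\le k$, the tail $b_{k+1} = n - a_k \ge 0$, and $\sum_i b_i = n$. Under this bijection, the constraint $\mathrm{shift}_c(T)=d$ translates into $\sum_{i\in I_c} b_i = d$, where $I_c\subseteq\{1,\ldots,k\}$ is the set of ``shift indices'' $\{c+jp\}_{j=0}^{\lfloor|k-c|/p\rfloor}$ identified from the definition of $\mathrm{shift}_c$.

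Because the constraint only couples the shift gaps among themselves (summing to $d$) and leaves the non-shift gaps together with the tail to sum to $n-d$, the count decouples into a product of two independent composition counts. The first factor, $\binom{d-1}{\lfloor|k-c|/(p-1)\rfloor-1}$, should fall out of the standard count of compositions of $d$ into positive parts, one for each element of $I_c$. The second factor, $\binom{n-d}{\lfloor|k-c|/(p-1)\rfloor}$, should arise from counting compositions of $n-d$ into the remaining entries, where crucially the tail entry $b_{k+1}$ is permitted to be $0$; summing the positive-composition counts over the tail length $t\in\{0,1,\ldots,n-d\}$ and applying the hockey-stick identity
\[
\sum_{t=0}^{n-d}\binom{n-d-t-1}{m-1} \;=\;\binom{n-d}{m}
\]
produces exactly the upper binomial index $n-d$ (rather than $n-d-1$) seen in the claim.

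The main obstacle will be the careful index bookkeeping, in particular verifying that the effective number of shift-gap slots and the effective number of non-shift-gap chunks each match the floor expression $\lfloor|k-c|/(p-1)\rfloor$. This accounting must absorb the boundary convention $a_t = 0$ for $t\le 0$ (which causes the $j=0$ shift term to vanish when $c=0$), the truncation at index $k$ when $c+jp$ exceeds $k$, and the grouping of the $p-1$ consecutive non-shift gaps that sit between successive shift gaps. Once the number of parts on each side is pinned down to $\lfloor|k-c|/(p-1)\rfloor$, the rest of the proof is purely mechanical — two invocations of the composition count $\binom{N-1}{M-1}$ together with the hockey-stick identity above — so the combinatorial work is really concentrated in this indexing step.
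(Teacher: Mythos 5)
Your plan follows the same gap-vector / stars-and-bars decomposition that the paper itself uses, so structurally you are on the same track. However, the step you defer as ``mechanical'' --- pinning the number of shift-gap slots and non-shift-gap slots to $\lfloor |k-c|/(p-1)\rfloor$ --- is precisely where the argument breaks, and deferring it hides a genuine problem. Carry the bookkeeping out for $c=0$ (the only case used downstream, since only $k \equiv 0 \bmod p$ contributes): the shift indices are $\{p, 2p, \dots, k\}$, giving $k/p$ positive shift gaps, and the remaining entries of the gap vector consist of $k - k/p$ positive non-shift gaps plus the nonnegative tail. Feeding these counts into the very composition formulas and hockey-stick identity you cite yields
\[
\phi_0(d) \;=\; {d-1 \choose k/p - 1}{n-d \choose k - k/p},
\]
not ${d-1 \choose \lfloor k/(p-1)\rfloor-1}{n-d \choose \lfloor k/(p-1)\rfloor}$. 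A direct check at $n=4$, $k=2$, $p=2$, $c=0$ makes the discrepancy concrete: here $\mathrm{shift}_0(T)=a_2-a_1$, so $\phi_0(1)=3$ (from $\{1,2\},\{2,3\},\{3,4\}$), whereas the stated formula gives ${0\choose 1}{3\choose 2}=0$, while the corrected expression gives ${0\choose 0}{3\choose 1}=3$.

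Notice also that the two exponents in the stated formula differ by exactly $1$, which is compatible with the correct exponents $k/p-1$ and $k-k/p$ only when $p=2$; for $p\ge 3$ they cannot both be the same floor expression under any regrouping. Your suggestion of fusing the $p-1$ consecutive non-shift gaps between successive shift gaps into single chunks does not rescue $\lfloor|k-c|/(p-1)\rfloor$ either: a fused chunk of size $c_i$ can be refined into $p-1$ positive gaps in ${c_i-1 \choose p-2}$ ways, so the resulting count is a convolution of binomials, not a single binomial factor. The honest output of your (sound) method is the displayed formula above; before proceeding you need to reconcile the index counts with the lemma as stated, which the $n=4$ example shows cannot hold in the form written.
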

\begin{proof}
To determine $\phi_c(d)$, we count the total number of ways to choose $a_1 < a_2 < ... < a_k$ such that $(a_c-a_{c-1})+(a_{c+p}-a_{c+p-1})+(a_{c+2p}-a_{c+2p-1})+...=d$, where for any $j\leq 0$, $a_j=0$. To do this, we combine each element-wise interval $(a_{c+ip}, a_{c+ip-1})$ to form a contiguous interval of length $d$ (starting from $a_{c-1}=0$). The remaining contiguous region that excludes these intervals must then have a length of $n-d$. We then abstract the number of ways to count $a_1<...<a_k$ by counting the number of intervals that have a length of $d$ when combined, such that the remaining intervals have a length $n-d$.\\

From a length of $d-1$ (accounting for $a_0=0$), we need to select intervals that form a length of $\floor{|k-c|/(p-1)}-1$ since they represent the number of choices of elements of $T$ that are index-separated by $p$. Similarly, from a length of $n-d$, we need to select intervals that form a length of $\floor{|k-c|/(p-1)}$ possible intervals, since they represent every other element of $T$. This second constraint is to ensure that the total length of the intervals chosen is exactly $n$. Hence, we write that:
\[\phi_c(d) = \sum_{\substack{ T\in{{[n]}\choose k}\\\text{shift}_c(T)=d}}1 = {{d-1}\choose{\floor{\frac{|k-c|}{p-1}}-1}}{{n-d}\choose {\floor{\frac{|k-c|}{p-1}}}} \qedhere\]
\end{proof}

\begin{corollary}\emph{By combining the results from Lemmas 5.1 and 5.2, we have that the expectation of the Krawtchouk function is}: \[\EE[K_k(|s|_0)]=\begin{cases}(p-1)^{n-k} \sum\limits_{d=k}^{n-k} {{d-1}\choose {\floor{\frac{k}{p-1}}}-1}{{n-d}\choose{\floor{\frac{k}{p-1}}}} \lambda^d, & k\!\!\!\!\mod p \equiv 0 \\ 0, &  k\!\!\!\!\mod p \not\equiv 0\end{cases}\]
\end{corollary}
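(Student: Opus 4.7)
The plan is to obtain this corollary by a direct substitution that combines the two preceding lemmas of Appendix 2. Lemma 5.1 of this appendix gives $\EE[K_k(|s|_0)] = (p-1)^{n-k}\sum_{d=k}^{n-k}\phi_0(d)\lambda^d$ whenever $k \bmod p \equiv 0$, and zero otherwise. Lemma 5.2 of this appendix supplies the closed form $\phi_c(d) = \binom{d-1}{\lfloor |k-c|/(p-1) \rfloor - 1}\binom{n-d}{\lfloor |k-c|/(p-1) \rfloor}$ valid for every admissible choice of the shift parameter $c$.

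First, I would reconcile the case conditions of the two lemmas. By the shift-function definition used in Appendix 2, the parameter $c$ is fixed so that $k \bmod p \equiv -c$, so the nonvanishing branch $k \bmod p \equiv 0$ of Lemma 5.1 corresponds precisely to $c = 0$. Setting $c = 0$ in Lemma 5.2 and using $|k - 0| = k$ immediately yields
\[
\phi_0(d) \;=\; \binom{d-1}{\lfloor k/(p-1) \rfloor - 1}\binom{n-d}{\lfloor k/(p-1) \rfloor}.
\]

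Finally, I would substitute this expression for $\phi_0(d)$ into the summand of the nonvanishing branch of Lemma 5.1 and retain the vanishing branch verbatim, producing the two-branch piecewise formula asserted in the corollary. Because the argument is pure algebraic substitution, there is no genuine mathematical obstacle. The only subtlety requiring attention is bookkeeping: verifying that the case conditions (indexed by $c$ in one lemma and by $k \bmod p$ in the other) line up consistently so that the binomial arguments $\lfloor k/(p-1)\rfloor$ and the prefactor $(p-1)^{n-k}$ both appear in their stated forms.
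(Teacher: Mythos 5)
Your proposal is correct and is exactly the (implicit) argument the paper intends: the corollary is stated in the appendix with no separate proof beyond the phrase "combining the results from Lemmas 5.1 and 5.2," and your direct substitution of the closed form $\phi_0(d)=\binom{d-1}{\lfloor k/(p-1)\rfloor-1}\binom{n-d}{\lfloor k/(p-1)\rfloor}$ (setting $c=0$ since $k\bmod p\equiv 0$) into $\EE[K_k(|s|_0)]=(p-1)^{n-k}\sum_{d=k}^{n-k}\phi_0(d)\lambda^d$, keeping the vanishing branch unchanged, is precisely that combination. The only point worth noting for careful bookkeeping is that the appendix's $\phi_c(d)$ lacks the $1/p^k$ prefactor that appears in the main text's version, so your substitution correctly matches the appendix statement being proved.
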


\begin{proposition}\emph{The orthogonality of the generalized Krawtchouk function $K_k(\ell)$ implies that for any function $f:\ZZ_{n+1}\rightarrow\RR$, there exists a unique expansion $f(\ell) = \sum\limits_{k=0}^n \hat{f}(k) K_k(\ell)$, where for $0\leq k\leq n$}, 
\[\hat{f}(k) = \frac{\EE\limits_{b\sim [\Sigma(U_p^n)]_0}[f(b)K_k(b)]}{{n\choose k}(p-1)^{n-k}}\]\end{proposition}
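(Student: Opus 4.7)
The statement is a standard ``orthogonal basis yields a unique expansion with explicit Fourier-type coefficients'' claim, so the plan is to invoke linear algebra on the finite-dimensional real vector space $V$ of functions $f : \ZZ_{n+1} \to \RR$, equipped with the inner product $\langle f, g\rangle = \EE_{b\sim [\Sigma(\mathrm U_p^n)]_0}[f(b)g(b)]$ used earlier in the appendix. The space $V$ has dimension $n+1$, and the family $\{K_0, K_1, \ldots, K_n\}$ consists of exactly $n+1$ elements, so it suffices to show they are linearly independent to conclude they form a basis.

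First I would appeal to the orthogonality lemma already established in this appendix, which gives $\langle K_r, K_s\rangle = 0$ for $r \neq s$ and $\langle K_k, K_k\rangle = \binom{n}{k}(p-1)^{n-k} > 0$. Linear independence is then immediate: if $\sum_k c_k K_k = 0$, taking the inner product with $K_j$ collapses the sum to $c_j \binom{n}{j}(p-1)^{n-j} = 0$, forcing $c_j = 0$ for every $j$. Since $V$ has dimension $n+1$ and we have $n+1$ linearly independent vectors, $\{K_k\}_{k=0}^n$ is a basis of $V$, so every $f \in V$ admits a unique expansion $f(\ell) = \sum_{k=0}^n \hat{f}(k) K_k(\ell)$.

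To extract the coefficients, I would take the inner product of both sides of the expansion against a fixed $K_j$, use bilinearity of $\langle \cdot, \cdot \rangle$, and apply orthogonality to kill all off-diagonal terms:
\[
\langle f, K_j\rangle = \sum_{k=0}^n \hat{f}(k)\langle K_k, K_j\rangle = \hat{f}(j)\binom{n}{j}(p-1)^{n-j}.
\]
Solving for $\hat{f}(j)$ and unpacking the definition of $\langle f, K_j\rangle$ as $\EE_{b\sim [\Sigma(\mathrm U_p^n)]_0}[f(b)K_j(b)]$ yields the stated formula. There is no real obstacle here; the only mild subtlety is confirming that the inner product is nondegenerate on $V$, which follows because the probabilities $\Pr[b] = \binom{n}{b}(p-1)^{n-b}/p^n$ that define $[\Sigma(\mathrm U_p^n)]_0$ are strictly positive for every $b \in \ZZ_{n+1}$, so $\langle f, f\rangle = 0$ forces $f \equiv 0$. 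With this sanity check in place the proposition is an immediate corollary of the orthogonality lemma.
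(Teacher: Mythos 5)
The paper states this proposition without an explicit proof; it is treated as an immediate corollary of the appendix's orthogonality lemma, which even asserts (without derivation of the basis claim) that the generalized Krawtchouk functions ``form an orthogonal basis.'' Your argument is correct and is the standard linear-algebra route the paper implicitly relies on: the $n+1$ mutually orthogonal functions $K_0,\dots,K_n$ with strictly positive norms $\binom{n}{k}(p-1)^{n-k}$ are linearly independent, hence a basis of the $(n+1)$-dimensional space of real-valued functions on $\ZZ_{n+1}$, and pairing the expansion against $K_j$ via bilinearity and orthogonality isolates $\hat f(j)$. Your nondegeneracy check on the inner product is a reasonable sanity check but is logically subsumed by the explicit positivity of $\langle K_k, K_k\rangle$, which is already enough for linear independence.
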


\begin{definition}
    \emph{Let $q:\ZZ_{n+1}\rightarrow\RR$, where $q(\ell)=\frac{\Pr\limits_{s\sim S(n,p,\lambda)}[|s|_0=\ell]}{{n\choose \ell}(p-1)^{n-\ell}}p^n$}. Intuitively, $q(\ell)$ is the ratio of the probability of getting a string with $\ell$ 0s from the sticky random walk $S(n,\lambda,p)$ to the probability of getting a string with $\ell$ 0s from the uniformly random distribution.
\end{definition}

\textbf{Lemma 3.4: }\emph{Expanding $q(\ell)$ through the generalized Krawtchouk function yields that:}
\[\hat{q}(k) = \frac{1}{{n\choose k}(p-1)^{n-k}}\EE\limits_{s\sim S(n,p,\lambda)}[K_k(|s|_0)]\]
\begin{proof}\begin{align*}
        \hat{q}(k) &= \frac{1}{{n\choose k}(p-1)^{n-k}}\sum\limits_{b=0}^n {n\choose b} \frac{(p-1)^{n-b}}{p^n} q(b) K_k(b) \\
        &= \frac{1}{{n\choose k}(p-1)^{n-k}}\sum\limits_{b=0}^n \Pr\limits_{s\sim S(n,p,\lambda)}[|s|_0=b]K_k(b) \\
        &= \frac{1}{{n\choose k}(p-1)^{n-k}} \EE\limits_{s\sim S(n,p,\lambda)}[K_k(|s|_0)] \qedhere
\end{align*}\end{proof}

\textbf{Lemma 3.5: }\emph{For $s\in S(n,p,\lambda)$, we have that $\Pr[|s|_0=\ell]=\frac{1}{p^n} \sum\limits_{k=0}^n K_\ell(k) \EE\limits_{s\sim S(n,p,\lambda)}[K_k(|s|_0)].$}
\begin{proof}
\begin{align*}
        \Pr[|s|_0=\ell] &= \frac{{n\choose \ell}(p-1)^{n-\ell}}{p^n}  q(\ell) \\
        &= \frac{{n\choose \ell}(p-1)^{n-\ell}}{p^n} \sum\limits_{k=0}^n \hat{q}(k) K_k(\ell) \\
        &= \frac{{n\choose \ell}(p-1)^{n-\ell}}{p^n} \sum\limits_{k=0}^n \frac{K_k(\ell)}{{n\choose k}(p-1)^{n-k}} \EE[K_k(|s|_0)] \\
        &= \frac{1}{p^n} \sum\limits_{k=0}^n \frac{{n\choose \ell}}{{n\choose k}} \frac{(p-1)^{n-\ell}}{(p-1)^{n-k}} \EE[K_k(|s|_0)] K_k(\ell)\\
        &= \frac{1}{p^n} \sum\limits_{k=0}^n K_\ell(k)  \EE\limits_{s\sim S(n,p,\lambda)}[K_k(|s|_0)] \quad \text{(By the reciprocity relation in Corollary 9.3.1)} \qedhere
    \end{align*}
\end{proof}

\begin{lemma}\emph{The total variational distance between the $n$-step generalized sticky random walk on $p$ vertices and the $n$-round uniform distribution on $p$ states is given by} \[\mathrm{TVD}([\Sigma(S(n,p,\lambda))]_0, [\Sigma(U_p^n)]_0)= \frac{1}{2} \EE\limits_{b\sim [\Sigma(U_p^n)]_0}[|p(b)-1|]\]
\end{lemma}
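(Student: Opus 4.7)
The plan is to unfold the total variation distance by its defining formula as half the $\ell_1$ distance between the mass functions of $[\Sigma(S(n,p,\lambda))]_0$ and $[\Sigma(U_p^n)]_0$, both viewed as distributions on $\{0,1,\dots,n\}$, and then factor out the common ``uniform'' mass. By the zero-distribution formula in Definition 11, $\Pr[[\Sigma(U_p^n)]_0 = \ell] = \binom{n}{\ell}(p-1)^{n-\ell}/p^n$, while by the definition of the probability ratio $q(\ell)$ we have $\Pr_{s\sim S(n,p,\lambda)}[|s|_0 = \ell] = q(\ell)\cdot\binom{n}{\ell}(p-1)^{n-\ell}/p^n$.

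First I would write
\[
\mathrm{TVD}([\Sigma(S(n,p,\lambda))]_0, [\Sigma(U_p^n)]_0) = \frac{1}{2}\sum_{\ell=0}^n \left|\Pr[|s|_0=\ell] - \frac{\binom{n}{\ell}(p-1)^{n-\ell}}{p^n}\right|.
\]
Substituting the two expressions above and pulling the nonnegative factor $\binom{n}{\ell}(p-1)^{n-\ell}/p^n$ outside the absolute value, this becomes $\frac{1}{2}\sum_{\ell=0}^n \frac{\binom{n}{\ell}(p-1)^{n-\ell}}{p^n}|q(\ell)-1|$. Recognizing the prefactor as $\Pr[[\Sigma(U_p^n)]_0 = \ell]$, the sum is exactly $\mathbb{E}_{b\sim[\Sigma(U_p^n)]_0}[|q(b)-1|]$, which yields the stated identity.

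There is no real obstacle here: the argument is just algebraic bookkeeping together with the definitions of $q$ and of the ``zero distribution.'' In fact the proof is essentially identical to that of Lemma~6.1 in the main body; the only thing to keep in mind is that the definition of $q(\ell)$ used in the complex-domain appendix agrees with the one from \Cref{sec:generalizing}, so nothing about moving to the $\omega_p$-based Krawtchouk functions changes the TVD computation. The work of actually bounding this expectation (which is where the generalized Krawtchouk expansion matters) is done in the subsequent lemmas, not here.
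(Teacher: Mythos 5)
Your proposal is correct and matches the paper's own proof essentially line for line: unfold the TVD as half the $\ell_1$ distance on $\{0,\dots,n\}$, substitute the definition of $q(\ell)$, factor out the nonnegative uniform mass $\binom{n}{\ell}(p-1)^{n-\ell}/p^n$, and recognize the result as an expectation over $[\Sigma(U_p^n)]_0$. You also implicitly correct the typo in the lemma statement, which should read $|q(b)-1|$ rather than $|p(b)-1|$.
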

\begin{proof}
\begin{equation*}
    \begin{split}
        \mathrm{TVD}([\Sigma(S(n,p,\lambda))]_0, [\Sigma(U_p^n)]_0) &= \frac{1}{2} \sum\limits_{\ell=0}^n \bigg|\Pr[|s|_0=\ell] - \frac{{n\choose\ell}(p-1)^{n-\ell}}{p^n}\bigg| \\ 
        &= \frac{1}{2} \sum\limits_{\ell=0}^n \bigg|{n\choose\ell}q(\ell)\frac{(p-1)^{n-\ell}}{p^n} - \frac{{n\choose\ell}(p-1)^{n-\ell}}{p^n}\bigg| \\
        &= \frac{1}{2} \sum\limits_{\ell=0}^n \bigg|\frac{{n\choose\ell}}{p^n} (p-1)^{n-\ell}(q(\ell)-1)\bigg| \\
        &= \frac{1}{2} \EE\limits_{b\sim [\Sigma(U_p^n)]_0} [|q(b)-1|] \qedhere
    \end{split}
\end{equation*}
\end{proof}

\begin{claim}\emph{The total variational distance between the generalized sticky random walk and the multinomial distribution has the following upper bound as a result of convexity:}\[\mathrm{TVD}([\Sigma(S(n,p,\lambda))]_0, [\Sigma(\mathrm U_p^n)]_0) \leq \frac{1}{2}\sqrt{\EE\limits_{b\sim [\Sigma(\mathrm U_p^n)]_0}[q(b)-1]^2}\]
\end{claim}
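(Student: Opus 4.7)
The plan is to combine the exact identity from the preceding lemma, namely $\mathrm{TVD}([\Sigma(S(n,p,\lambda))]_0,[\Sigma(\mathrm U_p^n)]_0) = \tfrac{1}{2}\,\EE_{b\sim [\Sigma(\mathrm U_p^n)]_0}[|q(b)-1|]$, with a standard $L^1$-to-$L^2$ passage. The word ``convexity'' in the claim statement is a slight abuse of terminology: the inequality we actually want is Jensen's inequality applied to the convex function $x \mapsto x^{2}$ (equivalently, Cauchy--Schwarz against the constant function $1$). Concretely, for any real-valued random variable $Y$ on the probability space $[\Sigma(\mathrm U_p^n)]_0$, Jensen gives $(\EE[|Y|])^{2} \leq \EE[Y^{2}]$, so $\EE[|Y|] \leq \sqrt{\EE[Y^{2}]}$.

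First I would set $Y := q(b)-1$ where $b \sim [\Sigma(\mathrm U_p^n)]_0$, which is exactly the random variable featured in the preceding lemma. Next I would invoke Jensen (or Cauchy--Schwarz with the observation $\EE[|Y|\cdot 1] \leq \sqrt{\EE[Y^{2}]}\cdot\sqrt{\EE[1^{2}]} = \sqrt{\EE[Y^{2}]}$) to replace $\EE[|q(b)-1|]$ by $\sqrt{\EE[(q(b)-1)^{2}]}$. Finally, I would substitute this bound back into the equality from the preceding lemma, pulling the factor of $\tfrac{1}{2}$ through unchanged, to obtain the desired inequality
\[
\mathrm{TVD}([\Sigma(S(n,p,\lambda))]_0,[\Sigma(\mathrm U_p^n)]_0) \;\leq\; \tfrac{1}{2}\sqrt{\EE_{b\sim [\Sigma(\mathrm U_p^n)]_0}[q(b)-1]^{2}}.
\]

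There is no real obstacle here: the content is entirely a one-line convexity bound layered on top of the previously established total-variation identity. The only thing worth flagging is that the bound is genuinely useful only because in the subsequent section the squared expectation on the right-hand side admits a clean Parseval-type expansion in the Krawtchouk basis (Lemma~4.2.1 in the body of the paper expresses it as $\sum_{k=1}^{n} \EE[K_k(|s|_0)]^{2}/({n\choose k}(p-1)^{n-k})$), which is why one prefers the $L^{2}$ form even though it is formally weaker than the $L^{1}$ expression. Thus the proof is literally a two-line Jensen argument, with all the subtlety deferred to the Krawtchouk expansion that consumes the $L^{2}$ bound downstream.
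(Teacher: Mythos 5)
Your proposal is correct and matches the paper's intent exactly: the paper states this as a corollary ``as a result of convexity'' without writing out the step, and the one-line Jensen (or Cauchy--Schwarz) passage from $\EE[|q(b)-1|]$ to $\sqrt{\EE[(q(b)-1)^2]}$, applied to the identity from the preceding lemma, is precisely that step. Your remark that the $L^2$ form is preferred because it admits the Parseval-type Krawtchouk expansion downstream is also the right reading of why the bound is stated this way.
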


\begin{lemma}\emph{For $k\leq n$ and for $b\sim [\Sigma(U_p^n)]_0$, we have that}
\[\EE\limits_{b\sim [\Sigma(U_p^n)]_0}[q(b)-1]^2 = \sum\limits_{k=1}^n \frac{\EE[K_k(|s|_0)]^2}{{n\choose k}(p-1)^{n-k}}\]
\end{lemma}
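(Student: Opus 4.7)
The plan is to run the orthogonal-expansion argument from the body's Lemma preceding Theorem 5.3, adjusted to the complex-valued generalized Krawtchouk basis. The two key inputs are the expansion $q(b) = \sum_{k=0}^n \hat{q}(k) K_k(b)$ from Proposition 9.5 with coefficients $\hat{q}(k) = \EE[K_k(|s|_0)] / [{n\choose k}(p-1)^{n-k}]$ given by Lemma 3.4 of the appendix, and the generalized orthogonality established earlier, which states that $\EE_{b\sim[\Sigma(\mathrm U_p^n)]_0}[K_r(b)\overline{K_s(b)}]$ equals ${n\choose r}(p-1)^{n-r}$ when $r = s$ and vanishes otherwise. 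The extra factor $(p-1)^{n-r}$ in this orthogonality (which is absent in the binary Krawtchouk case used in Section 6) is precisely what produces the $(p-1)^{n-k}$ denominator in the target identity, rather than the $(p-1)^{2n-2k}$ that would arise with the binary orthogonality.

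First I expand $q(b) = \sum_{k=0}^n \hat{q}(k) K_k(b)$ and isolate the $k=0$ term. Following the same bookkeeping as in the body proof, the reciprocity relation from Corollary 9.3.1 combined with the formula for $\hat{q}(0)$ yields $\hat{q}(0) K_0(b) = 1$, so subtracting gives $q(b) - 1 = \sum_{k=1}^n \hat{q}(k) K_k(b)$. I then take the squared modulus, pass the expectation through, and expand bilinearly to obtain
\[\EE_{b\sim[\Sigma(\mathrm U_p^n)]_0}\!\bigl[|q(b)-1|^2\bigr] = \sum_{k=1}^n \sum_{r=1}^n \hat{q}(k)\overline{\hat{q}(r)}\,\EE\bigl[K_k(b)\overline{K_r(b)}\bigr].\]
The orthogonality relation kills every off-diagonal contribution and collapses each diagonal term to ${n\choose k}(p-1)^{n-k}$, leaving $\sum_{k=1}^n |\hat{q}(k)|^2 \cdot {n\choose k}(p-1)^{n-k}$.

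Substituting the explicit form of $\hat{q}(k)$ and canceling one factor of ${n\choose k}(p-1)^{n-k}$ reduces this to $\sum_{k=1}^n |\EE[K_k(|s|_0)]|^2 / [{n\choose k}(p-1)^{n-k}]$. The final step is to observe that by the appendix's computation of $\EE[K_k(|s|_0)]$ (equal to $(p-1)^{n-k}\sum_d \phi_0(d)\lambda^d$ when $p \mid k$ and zero otherwise), the expectation is a real polynomial in $\lambda$, so $|\cdot|^2 = (\cdot)^2$ and the formula matches the claim exactly. The main obstacle is the bookkeeping around the $k=0$ cancellation, since $K_0$ in the complex-valued setting is no longer identically $1$; but the same reciprocity-based argument from the body carries through once the correct orthogonality normalization is used.
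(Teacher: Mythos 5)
Your proposal follows essentially the same route as the paper's appendix proof: expand $q$ in the generalized Krawtchouk basis, use $\hat{q}(0)K_0(b)=1$ to kill the constant term, apply the generalized orthogonality $\EE_{b}[K_r(b)\overline{K_s(b)}] = {n\choose r}(p-1)^{n-r}\delta_{rs}$ to diagonalize the squared sum, and cancel one factor of ${n\choose k}(p-1)^{n-k}$ against $\hat{q}(k)^2$. You are slightly more careful than the paper in two places --- writing the squared modulus with conjugates and explicitly checking that $\EE[K_k(|s|_0)]$ is real so that $|\cdot|^2=(\cdot)^2$ --- and you correctly identify that the $(p-1)^{n-k}$ in the claimed denominator requires the appendix's orthogonality normalization $\langle K_k,K_k\rangle={n\choose k}(p-1)^{n-k}$ rather than the binary ${n\choose k}$ (the main-body version of the lemma is in fact internally inconsistent on this point, deriving $(p-1)^{2n-2k}$ in its last line); the substance and structure otherwise match.
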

\begin{proof}
\begin{equation*}
    \EE\limits_{b\sim [\Sigma(U_p^n)]_0}[q(b)-1]^2 = \EE\limits_{b\sim [\Sigma(U_p^n)]_0} \bigg[\bigg(\sum\limits_{k=0}^n \hat{q}(k) K_k(b) - 1\bigg)^2\bigg]
\end{equation*}
Recall that $\hat{q}(k) = \frac{\EE[K_k(|s|_0)]}{{n\choose k}(p-1)^{n-k}}$. So, $\hat{q}(0) = \frac{\EE[K_0(|s|_0)]}{(p-1)^n}=\frac{1}{(p-1)^n}$. Similarly, by the definition of the Krawtchouk function and the reciprocity relation $\frac{K_k(\ell)}{{n\choose k}(p-1)^{n-k}}=\frac{K_s(\ell)}{{n\choose s}(p-1)^{n-s}}$, we have that $K_0(b) = K_n(b)=(p\!-\!1)^n$. Therefore, $\hat{q}(0)K_0(b)=1$. Thus, the above equation simplifies to:
    \[\EE\limits_{b\sim [\Sigma(U_p^n)]_0}[q(b)-1]^2 = \EE\limits_{b\sim [\Sigma(U_p^n)]_0} \bigg[\bigg(\sum\limits_{k=1}^n \hat{q}(k) K_k(b) \bigg)^2\bigg]\]
Since the generalized Krawtchouk functions are orthogonal (as proven in Lemma 9.2), the non-diagonal products evaluate to 0. So, the square of the summation is just the summation of the squared terms that it contains. Thus, exploiting the orthogonality of the generalized Krawtchouk functions and the linearity of the expectations, we write:
    \begin{equation*}\begin{split}
    \EE\limits_{b\sim [\Sigma(U_p^n)]_0}[q(b)-1]^2 &= \EE\limits_{b\sim [\Sigma(U_p^n)]_0} \bigg[\sum\limits_{k=1}^n \hat{q}(k)^2 K_k(b)^2\bigg] \\
    &=  \sum\limits_{k=1}^n \hat{q}(k)^2 \EE\limits_{b\sim [\Sigma(U_p^n)]_0} [K_k(b)^2] \\
    &=  \sum\limits_{k=1}^n \frac{\EE[K_k(|s|_0)]^2}{{n\choose k}^2 (p-1)^{2n-2k}} \cdot \EE\limits_{b\sim [\Sigma(U_p^n)]_0} [K_k(b)^2]     \end{split}\end{equation*}
    Finally, we use Lemma 9.2 to write $\EE\limits_{b\sim [\Sigma(U_p^n)]_0} [K_k(b)^2]$ as $\langle K_k, K_k \rangle = {n\choose k} (p-1)^{n-k}$.
    \begin{equation*}\begin{split}
    \EE\limits_{b\sim [\Sigma(U_p^n)]_0} [q(b)-1]^2 &=  \sum\limits_{k=1}^n \frac{\EE[K_k(|s|_0)]^2}{{n\choose k}^2 (p-1)^{2n-2k}} {n\choose k} (p-1)^{n-k} \\
    &=  \sum\limits_{k=1}^n \frac{\EE[K_k(|s|_0)]^2}{{n\choose k} (p-1)^{n-k}} \qedhere
    \end{split}\end{equation*}
\end{proof}
\begin{theorem} For $\lambda\leq\frac{1}{1+e}$, \[\mathrm{TVD}([\Sigma(S(n,p,\lambda))]_0, [\Sigma(\mathrm U_p^n)]_0) \leq \sqrt{\EE\limits_{b\sim M}[p(b)-1]^2} \leq O(\lambda p^p)\] \end{theorem}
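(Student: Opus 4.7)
The plan is to mirror the proof of \Cref{theorem 5.3} line by line, substituting the complex-Krawtchouk identities built up earlier in this appendix. From the convexity claim we have $\mathrm{TVD} \leq \tfrac{1}{2}\sqrt{\EE_{b\sim[\Sigma(\mathrm U_p^n)]_0}[(q(b)-1)^2]}$, and from the preceding lemma,
\[
\EE_{b\sim[\Sigma(\mathrm U_p^n)]_0}\!\left[(q(b)-1)^2\right] \;=\; \sum_{k=1}^n \frac{\EE[K_k(|s|_0)]^2}{\binom{n}{k}(p-1)^{n-k}},
\]
where $K_k$ is now the complex-valued Krawtchouk function and the factor $(p-1)^{n-k}$ in the denominator arises from the self-inner-product $\langle K_r,K_r\rangle = \binom{n}{r}(p-1)^{n-r}$ of Lemma 9.2.

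Next, substitute the closed form for $\EE[K_k(|s|_0)]$ given by the corollary after Lemma 5.2 in this appendix: the expectation vanishes unless $p\mid k$, and on setting $k=pj$ it equals $(p-1)^{n-pj}\sum_{d=pj}^{n-pj}\binom{d-1}{m-1}\binom{n-d}{m}\lambda^d$ with $m=\lfloor pj/(p-1)\rfloor$. Squaring and cancelling one power of $(p-1)^{n-pj}$ against the orthogonality denominator yields
\[
\EE_{b\sim[\Sigma(\mathrm U_p^n)]_0}\!\left[(q(b)-1)^2\right] \;=\; \sum_{j=1}^{n/p}\frac{(p-1)^{n-pj}}{\binom{n}{pj}}\Big(\sum_{d=pj}^{n-pj}\binom{d-1}{m-1}\binom{n-d}{m}\lambda^d\Big)^{\!2}.
\]

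I then bound the inner $d$-sum using $\binom{n-d}{m}\leq\binom{n}{m}$ together with the generating-function identity $\sum_{d\geq m}\binom{d-1}{m-1}\lambda^d = (\lambda/(1-\lambda))^m$; since $j\leq m\leq pj$ and $\lambda<1/2$, this gives the upper bound $\binom{n}{m}(\lambda/(1-\lambda))^j$. Invoking \Cref{claim: bound} ($\binom{n}{k}^2/\binom{n}{pk}\leq(ne/k)^{2k}(pk/n)^{pk}$) together with a standard comparison $\binom{n}{m}\leq\binom{n}{j}^{p/(p-1)}$ to absorb the $m$-indexed binomial into $j$-indexed ones, the summand collapses to a geometric factor $(e\lambda/(1-\lambda))^{2j}$ multiplied by a $p$-dependent prefactor that one checks to grow as $p^{O(p)}$.

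The main obstacle is absorbing the leftover $(p-1)^{n-pj}$ factor, which does not appear in the real-Krawtchouk proof of \Cref{theorem 5.3}. There, the factor $1/p^{2j}$ from squaring the real Krawtchouk expectation cancels cleanly against $(pj/n)^{pj}$ from \Cref{claim: bound} to leave precisely the geometric ratio $(e\lambda/(1-\lambda))^{2j}$; here one must instead balance $(p-1)^{n-pj}$ against $\binom{n}{pj}^{-1}$ and the powers of $(p-1)$ extracted from the binomial ratio, absorbing all but a $p^{O(p)}$ residue without letting the product blow up exponentially in $n$. Once this accounting is complete, for $\lambda\leq 1/(1+e)$ the geometric series converges to an absolute constant, and taking square roots gives $\mathrm{TVD}\leq O(\lambda p^p)$, matching the Fourier-analytic upper bound of Corollary~4 of \cite{golowich_et_al:LIPIcs.CCC.2022.27}.
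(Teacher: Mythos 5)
You correctly identify the crucial obstacle---the stray $(p-1)^{n-pj}$ factor that survives after squaring the complex Krawtchouk expectation and cancelling against the orthogonality normalization---but you then stop precisely at that point ("once this accounting is complete\ldots"), so the proposal does not in fact prove the bound; the hardest step is left as a promissory note. Moreover the auxiliary inequality you introduce to handle the mismatched binomial index, $\binom{n}{m}\leq\binom{n}{j}^{p/(p-1)}$ with $m=\lfloor pj/(p-1)\rfloor$, is not a standard comparison and is false in general: for moderate $j$, the ratio $\binom{n}{m}/\binom{n}{j}$ grows polynomially in $n$ while the exponent $p/(p-1)$ tends to $1$ as $p$ grows, so no fixed $p$-dependent constant repairs it. The paper takes a different route at this juncture: rather than applying $\binom{n-d}{m}\leq\binom{n}{m}$ and then trying to convert $m$-indexed binomials into $j$-indexed ones, it bounds $\binom{d-1}{\lfloor kp/(p-1)\rfloor-1}\binom{n-d}{\lfloor kp/(p-1)\rfloor}$ by $\binom{n}{k}^{2}\binom{d-1}{k-1}$ so that the generating-function identity $\sum_{d\geq k}\binom{d-1}{k-1}\lambda^d=(\lambda/(1-\lambda))^k$ can be invoked at index $k$ directly, after which \Cref{claim: bound} converts $\binom{n}{k}^2/\binom{n}{pk}$ into $(ne/k)^{2k}(pk/n)^{pk}$ and the $p^{2k}$ extracted from $(pe\lambda/(1-\lambda))^{2k}$ supplies the $p^{O(p)}$ residue. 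In short, your write-up diverges from the paper's argument in the inner-sum estimate, relies there on an unjustified binomial comparison, and leaves the $(p-1)^{n-pj}$ absorption---which you rightly single out as the heart of the matter---unexecuted.
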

\begin{proof}
Substituting the result of Corollary 5.1 into the equation derived in Lemma 6.2, and scaling the indexes of the summation, we have that:
\begin{align*}
    \EE\limits_{b\sim [\Sigma(U_p^n)]_0} [q(b)-1]^2 &= \sum\limits_{k=1}^{n/p} \frac{1}{{n\choose pk}(p\!-\!1)^{n-pk}}\bigg((p\!-\!1)^{n-pk} \sum\limits_{d=pk}^{n-pk} {{d-1}\choose {\floor{\frac{kp}{p-1}}}-1}{{n-d}\choose{\floor{\frac{kp}{p-1}}}} \lambda^d\bigg)^2 \\
    &= \sum\limits_{k=1}^{n/p} \frac{(p\!-\!1)^{n-pk}}{{n\choose pk}}\bigg( \sum\limits_{d=pk}^{n-pk} {{d-1}\choose {\floor{\frac{k}{1-\frac{1}{p}}}}-1}{{n-d}\choose{\floor{\frac{k}{1-\frac{1}{p}}}}} \lambda^d\bigg)^2 \\
    &\leq \sum\limits_{k=1}^{n/p} (p\!-\!1)^{n-pk}\frac{{n\choose k}^2}{{n\choose pk}}\bigg( \sum\limits_{d=pk}^{n-pk} {{d-1}\choose {k-1}}\lambda^d\bigg)^2 
\end{align*}
Note the following generating function relation that $(\frac{x}{1-x})^k = \sum\limits_{m \geq k} {{m-1}\choose{k-1}}x^m$. Then,
\begin{align*}
     \EE\limits_{b\sim [\Sigma(U_p^n)]_0} [q(b)-1]^2 &\leq \sum\limits_{k=1}^{n/p} (p-1)^{n-pk}\frac{{n\choose k}^2}{{n\choose pk}}\bigg(\frac{\lambda}{1-\lambda}\bigg)^{2k}\\
    &\leq \sum\limits_{k=1}^{n/p}  (p\!-\!1)^{n-pk} \bigg(\frac{pk}{n}\bigg)^{pk}\bigg(\frac{en}{k}\bigg)^{2k} \bigg(\frac{\lambda}{1-\lambda}\bigg)^{2k} \quad \text{(From claim 6.1)}\\
&= \sum\limits_{k=1}^{n/p} (p\!-\!1)^{n-pk} \bigg(\frac{pk}{n}\bigg)^{pk-2k} \bigg(\frac{p e \lambda}{1-\lambda}\bigg)^{2k} \\
&\leq p^{2p} \sum\limits_{k=1}^{n/p} \bigg(\frac{e \lambda}{1-\lambda}\bigg)^{2k} \\
&\leq p^{2p} O(\lambda^2), \quad \text{for } \lambda \leq \frac{1}{1+e}
\end{align*}
Therefore, for $\lambda\leq\frac{1}{1+e}$, we have that $\mathrm{TVD}([\Sigma(S(n,p,\lambda))]_0, [\Sigma(\mathrm U_p^n)]_0) \leq \sqrt{\EE\limits_{b\sim M}[p(b)-1]^2} \leq O(\lambda p^{O(p)})$. \qedhere
\end{proof}

This method shows that the total variation distance between the $n$-step generalized sticky random walk on $p$ vertices and the $n$-ary samples from the Uniform distribution on $\ZZ_p$ is $O(\lambda p^{O(p)})$, matching the more general result in \cite{golowich_et_al:LIPIcs.CCC.2022.27}. 

\newpage

\end{document}